\def\th@plain{%
  \upshape 
}
\renewenvironment{proof}[1][\proofname]{\par
  \pushQED{\qed}%
  \normalfont \topsep6\p@\@plus6\p@\relax
  \trivlist
  \item[\hskip\labelsep
        \bfseries
    #1\@addpunct{.}]\ignorespaces
}{%
  \popQED\endtrivlist\@endpefalse
}
\newtheorem{theorem}{Theorem}
\numberwithin{theorem}{section}
\newtheorem{lemma}{Lemma}
\newtheorem{corollary}{Corollary}
\newtheorem{conjecture}{Conjecture}
\newtheorem*{conjecture*}{Conjecture}
\newtheorem{case}{Case}
\newtheorem{subcase}{Subcase}[case]
\newtheorem{claim}{Claim}
\newtheorem{fact}{Fact}
\theoremstyle{definition}
\newcounter{Hcase}
\newcounter{Hclaim}
\newcommand{\resetcounter}{\stepcounter{Hcase}\setcounter{case}{0}\stepcounter{Hclaim}\setcounter{claim}{0}}
\newcommand{\etal}{et~al.\ }
\def\int(#1){\mathrm{int}(#1)}
\def\ext(#1){\mathrm{ext}(#1)}
\def\Int(#1){\mathrm{Int}(#1)}
\def\Ext(#1){\mathrm{Ext}(#1)}
\def\mad(#1){\mathrm{mad}(#1)}
\def\la(#1){\mathrm{la}(#1)}
\begin{document}%
\title{Acyclic edge coloring of graphs}
\author{Tao Wang\,\textsuperscript{a, b, }\footnote{{\tt Corresponding
author: wangtao@henu.edu.cn} }\ \ \ Yaqiong Zhang\,\textsuperscript{b}\\
{\small \textsuperscript{a}Institute of Applied Mathematics}\\
{\small Henan University, Kaifeng, 475004, P. R. China}\\
{\small \textsuperscript{b}College of Mathematics and Information Science}\\
{\small Henan University, Kaifeng, 475004, P. R. China}}
\date{}
\maketitle
\begin{abstract}%
An {\em acyclic edge coloring} of a graph $G$ is a proper edge coloring such that the subgraph induced by any two color classes is a linear forest (an acyclic graph with maximum degree at most two). The {\em acyclic chromatic index} $\chiup_{a}'(G)$ of a graph $G$ is the least number of colors needed in an acyclic edge coloring of $G$. Fiam\v{c}\'{i}k (1978) conjectured that $\chiup_{a}'(G) \leq \Delta(G) + 2$, where $\Delta(G)$ is the maximum degree of $G$. This conjecture is well known as Acyclic Edge Coloring Conjecture (AECC). A graph $G$ with maximum degree at most $\kappa$ is {\em $\kappa$-deletion-minimal} if $\chiup_{a}'(G) > \kappa$ and $\chiup_{a}'(H) \leq \kappa$ for every proper subgraph $H$ of $G$. The purpose of this paper is to provide many structural lemmas on $\kappa$-deletion-minimal graphs. By using the structural lemmas, we firstly prove that AECC is true for the graphs with maximum average degree less than four (\autoref{NMAD4}). We secondly prove that AECC is true for the planar graphs without triangles adjacent to cycles of length at most four, with an additional condition that every $5$-cycle has at most three edges contained in triangles (\autoref{NoAdjacent}), from which we can conclude some known results as corollaries. We thirdly prove that every planar graph $G$ without intersecting triangles satisfies $\chiup_{a}'(G) \leq \Delta(G) + 3$ (\autoref{NoIntersect}). Finally, we consider one extreme case and prove it: if $G$ is a graph with $\Delta(G) \geq 3$ and all the $3^{+}$-vertices are independent, then $\chiup_{a}'(G) = \Delta(G)$. We hope the structural lemmas will shed some light on the acyclic edge coloring problems.
\end{abstract}
\section{Introduction}
All graphs considered are finite, simple and undirected. An {\em acyclic edge coloring} of a graph $G$ is a proper edge coloring such that the subgraph induced by any two color classes is a linear forest (an acyclic graph with maximum degree at most two). The {\em acyclic chromatic index} $\chiup_{a}'(G)$ of a graph $G$ is the least number of colors needed in an acyclic edge coloring of $G$. We denote the minimum and maximum degrees of vertices of $G$ by $\delta(G)$ and $\Delta(G)$, respectively. The {\em degree} of a vertex $v$ in $G$, denoted by $\deg(v)$, is the number of incident edges of $G$. A vertex of degree $k$ is called a $k$-vertex, and a vertex of degree at most or at least $k$ is called a $k^{-}$- or $k^{+}$-vertex, respectively. Let $[\kappa]$ stand for the set $\{1, 2, \dots, \kappa\}$.

Fiam\v{c}\'{i}k \cite{MR526851} stated the following conjecture in 1978, which is well known as Acyclic Edge Coloring Conjecture, and Alon \etal \cite{MR1837021} restated it in 2001.

\begin{conjecture}[AECC]\label{AECC}%
For every graph $G$, we have $\chiup_{a}'(G) \leq \Delta(G) + 2$.
\end{conjecture}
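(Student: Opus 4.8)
The plan is to attack \autoref{AECC} by the method of minimal counterexamples, exactly the framework for which the structural lemmas of this paper are built. Set $\kappa = \Delta(G)+2$ and suppose the conjecture fails. Among all counterexamples choose one, $G$, with the fewest edges; since every proper subgraph $H$ of $G$ satisfies $\Delta(H) \le \Delta(G) \le \kappa$ and $\chiup_{a}'(H) \le \kappa$ while $\chiup_{a}'(G) > \kappa$, the graph $G$ is $\kappa$-deletion-minimal and the full strength of the structural lemmas applies to it. The first block of steps is therefore to feed $G$ through those lemmas: eliminate $1$-vertices, control the neighbourhoods of $2$-vertices and other low-degree vertices, and forbid the various reducible local configurations, thereby forcing a high minimum degree and a rigid local structure around every sparse region of $G$.

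The second, decisive block is to convert this local rigidity into a global impossibility. For the sparse instances treated in \autoref{NMAD4} and in the planar theorems of this paper, this is accomplished by a discharging argument: one assigns an initial charge to each vertex (and face), redistributes it according to discharging rules calibrated to the forbidden configurations, and reaches the contradiction that the total charge is simultaneously bounded above by a negative quantity (via the maximum-average-degree hypothesis or Euler's formula) and nonnegative (since every vertex ends with nonnegative charge). To handle an \emph{arbitrary} $G$ one must replace the sparseness budget by a purely counting or probabilistic argument, bounding the number of colours excluded at a removed edge $e = uv$ both by properness and by the bichromatic paths joining $u$ and $v$, and exhibiting a colour that survives both constraints after a bounded sequence of Kempe-type recolourings.

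The hard part — and the reason \autoref{AECC} has resisted proof for four decades — is precisely this extension step under a merely additive slack of two colours. When $u$ and $v$ are both $\Delta$-vertices, properness alone already forbids up to $2\Delta-2$ of the $\Delta+2$ colours, so a naive extension is impossible and one is forced into delicate exchanges of colours along maximal bichromatic paths; each exchange may create new bichromatic cycles elsewhere, and with only two spare colours there is no room to absorb the resulting conflicts. The standard global tools — the Lov\'{a}sz Local Lemma, the Molloy--Reed and entropy-compression refinements, and the arguments of Alon, McDiarmid and Reed — all lose a multiplicative factor and deliver bounds of the form $c\,\Delta$, never the tight $\Delta+2$. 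I therefore expect the local-to-global transition to be the genuine obstacle: the structural lemmas are strong enough to settle the sparse and planar regimes treated here, but a fundamentally new idea for controlling bichromatic paths in dense graphs with constant slack appears to be needed before the unconditional statement of \autoref{AECC} can be established, which is why this paper confines itself to the conditional results announced in the abstract.
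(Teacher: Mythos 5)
The statement you were asked about is \autoref{AECC} itself, which is a \emph{conjecture}: the paper contains no proof of it, and indeed nobody does --- it is the open problem of Fiam\v{c}\'{i}k that motivates the whole paper. Your proposal, to its credit, does not actually claim to prove it; it sketches the minimal-counterexample framework and then explicitly concedes that the decisive step is missing. So the honest verdict is that your text is a strategy outline with a self-acknowledged gap, not a proof, and that is the correct position to take: it matches the paper's own stance, since the authors only establish \autoref{AECC} under sparseness hypotheses (\autoref{NMAD4}, \autoref{NoAdjacent}) and prove weaker bounds such as $\Delta(G)+3$ elsewhere (\autoref{NoIntersect}).

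To name the gap concretely: your first block (taking $G$ edge-minimal, deducing that $G$ is $\kappa$-deletion-minimal for $\kappa = \Delta(G)+2$, and applying the structural lemmas of Section~\ref{SL}) is sound and is exactly how the paper operates. But your second block has no content for an arbitrary graph. Discharging requires a global inequality --- $\sum_{v}(\deg(v)-4)<0$ from $\mad(G)<4$, or Euler's formula for plane graphs --- asserting that the reducible configurations cannot all be avoided; for a dense graph (say $\Delta$-regular with $\Delta$ large) no such counting inequality exists, and the structural lemmas, which all concern $2$- and $3$-vertices and small-degree neighbourhoods, say nothing at all about such a graph. Likewise, your appeal to ``a bounded sequence of Kempe-type recolourings'' is not an argument: no lemma in the paper, and no cited result, shows such a process terminates or succeeds with only two spare colours, and you yourself observe that when $\deg(u)=\deg(v)=\Delta$ properness can already exclude $2\Delta-2$ of the $\Delta+2$ colours at a deleted edge $uv$, so Fact~2 forces many common colours and long critical paths whose recolouring is uncontrolled. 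In short: nothing needs fixing in your write-up other than its framing --- it should be presented as an explanation of why the conjecture remains open, which is what it correctly is, rather than as a proof proposal.
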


Alon, McDiarmid and Reed \cite{MR1109695} proved that the acyclic chromatic index of a graph $G$ is at most $64\Delta(G)$. Molloy and Reed \cite{MR1715600} improved the upper bound to $16\Delta(G)$. Ndreca \etal \cite{MR2864444} improved the upper bound to $\lceil 9.62(\Delta(G)-1)\rceil$. Recently, Esperet and Parreau \cite{MR3037985} further improved it to $4\Delta(G)$ by using the so-called entropy compression method. Note that $\chiup_{a}'(G) \leq 3$ if $\Delta(G) = 2$. Bur{\v{s}}te{\u\i}n \cite{MR539795} proved that every graph with maximum degree four has an acyclic vertex coloring with five colors. Since an acyclic edge coloring of a graph $G$ is an acyclic vertex coloring of its line graph $L(G)$, and the maximum degree of a line graph $L(G)$ of a subcubic graph $G$ is at most four, it follows that $\chiup_{a}'(G) \leq 5$ if $\Delta(G) = 3$. Hence, \autoref{AECC} holds for $\Delta(G) \leq 3$. Furthermore, Andersen \etal \cite{MR2988879} proved that the acyclic chromatic index of a connected subcubic graph $G$ is at most four unless $G$ is $K_{4}$ or $K_{3, 3}$, the acyclic chromatic index of $K_{4}$ and $K_{3, 3}$ is five. \autoref{AECC} has also been verified for some special classes of graphs. Muthu \etal \cite{Muthu2007} proved that $\chiup_{a}'(G) \leq \Delta(G) + 1$ for every outerplanar graph $G$. Hou \etal \cite{MR2656747} proved that $\chiup_{a}'(G) = \Delta(G)$ for every outerplanar graph $G$ with $\Delta(G) \geq 5$.
The conjecture is also true for planar graphs with girth at least five \cite{MR2491777, MR2601248} and planar graphs with girth at least four \cite{MR3065111}.

Fiedorowicz \etal \cite{MR2458434} proved that $\chiup_{a}'(G) \leq 2\Delta(G) + 29$ for every planar graph $G$. Basavaraju \etal \cite{MR2817509} showed that the acyclic chromatic index of a planar graph $G$ is at most $\Delta(G) + 12$. Furthermore, Guan \etal \cite{MR3031510} improved the upper bound to $\Delta(G) + 10$ and Wang \etal \cite{MR2994403} further improved it to $\Delta(G) + 7$.

The {\em maximum average degree} $\mad(G)$ of a graph $G$ is the largest average degree of its subgraphs, that is,
\[\mad(G) = \max_{H \subseteq G} \left\{\frac{2|E(H)|}{|V(H)|}\right\}.\]

For the graph with small maximum average degree, we have known the following result.

\begin{theorem}[Basavaraju and Chandran \cite{MR2988880}]%
If $G$ is a graph with $\mad(G) < 4$, then $\chiup_{a}'(G) \leq \Delta(G) + 3$.
\end{theorem}

Recently, Wang \etal \cite{MR2979486} and Hou \cite{2012arXiv1202.6129H} independently proved the following result.

\begin{theorem}\label{MAD4}%
If $G$ is a graph with $\mad(G) < 4$, then $\chiup_{a}'(G) \leq \Delta(G) + 2$.
\end{theorem}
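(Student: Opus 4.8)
The plan is to argue by contradiction, combining a minimal counterexample with the discharging method. Write $\kappa = \Delta(G) + 2$ and let $G$ be a counterexample minimizing $|E(G)|$. Because AECC already holds when $\Delta(G) \le 3$ (recorded in the excerpt), I may assume $\Delta(G) \ge 4$, and I would dispatch the borderline case $\Delta(G) = 4$ by a separate direct argument so that the discharging below can assume a $5^{+}$-vertex is present. Since $\mad$ is monotone under taking subgraphs, every proper subgraph $H$ of $G$ has $\mad(H) < 4$ and fewer edges, so $\chiup_{a}'(H) \le \Delta(H) + 2 \le \kappa$; hence $G$ is $\kappa$-deletion-minimal, and I am free to use any structural lemma valid for such graphs.

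First I would assemble the structural lemmas. Each has the same template: a local configuration is shown to be \emph{reducible}, meaning that if $G$ contained it, I could delete a well-chosen edge or low-degree vertex, acyclically $\kappa$-color the smaller graph by minimality, and then extend the coloring back to $G$, contradicting $\chiup_{a}'(G) > \kappa$. The configurations I expect to need all control the low-degree vertices, whose initial charge will be negative: that $\delta(G) \ge 2$; that no $2$-vertex is adjacent to a $2$- or $3$-vertex; and, more generally, sharp upper bounds on the number of $2$- and $3$-vertices that may surround a vertex of a given degree. Informally, I want every $2$-vertex flanked by two vertices large enough to subsidize it fully, and every vertex to host no more dependent low-degree neighbors than its surplus can pay for.

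Next I would run a vertex-discharging argument (the hypothesis concerns average degree rather than planarity, so no faces enter). Give each vertex the initial charge $\mu(v) = \deg(v) - 4$. Then $\mad(G) < 4$ yields $\sum_{v} \deg(v) < 4|V(G)|$, so the total charge $\sum_{v} \mu(v)$ is strictly negative. I would then set discharging rules in which $5^{+}$-vertices send prescribed amounts along their incident edges to adjacent $2$- and $3$-vertices (a $2$-vertex must collect $2$, a $3$-vertex must collect $1$, while $4$-vertices remain neutral). The structural lemmas are calibrated exactly so that after discharging every vertex satisfies $\mu^{*}(v) \ge 0$; since total charge is preserved, $0 > \sum_{v} \mu(v) = \sum_{v} \mu^{*}(v) \ge 0$, the desired contradiction.

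The main obstacle is the family of extension steps behind the structural lemmas, not the discharging bookkeeping. In contrast with ordinary proper edge coloring, extending an acyclic coloring to an uncolored edge $uv$ must avoid two kinds of forbidden colors: those already present at $u$ or $v$, and those that would close a bichromatic cycle. The second kind is what makes the analysis delicate. A candidate color $\gamma \notin C(u) \cup C(v)$ fails only when some maximal $(\gamma, \delta)$-bichromatic path already joins $u$ and $v$; I would bound the number of such obstructing pairs $(\gamma, \delta)$ by a count over the colors appearing at $u$ and $v$, and, whenever this naive count is too weak, recolor a single edge along a suitable alternating path to liberate a usable color. Guaranteeing that these recolorings stay compatible with acyclicity throughout — so that eliminating one threatened bichromatic cycle never spawns another — is the technically demanding core, and it is precisely here that the deletion-minimality of $G$ and the fine adjacency restrictions among low-degree vertices are brought to bear.
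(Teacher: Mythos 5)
Your skeleton coincides with the paper's own proof of \autoref{NMAD4}: a minimal counterexample is $\kappa$-deletion-minimal for $\kappa=\Delta(G)+2$ because $\mad(G)<4$ is deletion-closed, and then vertex discharging with initial charge $\deg(v)-4$ (2-vertices must collect $2$, 3-vertices must collect $1$, 4-vertices are neutral) contradicts the strictly negative total charge. The gap is that everything which makes this discharging close is deferred to structural lemmas that you never state in checkable form, let alone prove---and those lemmas are the actual mathematical content of the theorem (the paper's Section~\ref{SL}, which occupies most of the paper).

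Concretely, the configurations you name are not strong enough, and the ``exact calibration'' you invoke is nontrivial to even formulate. ``No 2-vertex adjacent to a 2- or 3-vertex'' is far too weak: since each donor's surplus is $\deg-4$, a 2-vertex needs both of its neighbors to have degree at least $6$ (\autoref{2++edge} with $\kappa=\Delta+2$), not merely degree at least $4$. For 3-vertices a single uniform subsidy rule cannot work, because a neighbor of a 3-vertex may legitimately be a 4-vertex (\autoref{3+vertex}), and 4-vertices have nothing to give; the paper needs the special/normal dichotomy---if a 3-vertex has a 4-neighbor, then its other two neighbors have degree at least $5$, and any $5^{+}$-vertex adjacent to such a 3-vertex has at least three $4^{+}$-neighbors (\autoref{Good-3-vertex}~\ref{3b}, \ref{3d} and \ref{3f})---to justify the two rates $1/2$ and $1/3$. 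One also needs that a 5-vertex is adjacent to at most three 3-vertices (\autoref{N_3_N}), since otherwise a 5-vertex, with surplus $1$, adjacent to four or five normal 3-vertices would end negative; and for $6^{+}$-vertices adjacent to 2-vertices one needs the bound of \autoref{2+edge}~\ref{B} on the number of 2-neighbors. Each of these reducibility statements is established in the paper by lengthy recoloring arguments along alternating and critical paths; your closing paragraph correctly identifies this as the demanding core but does not carry any of it out, so what you have is a plan rather than a proof. A minor further point: your separate treatment of $\Delta(G)=4$ is unnecessary---when $\Delta=4$ the same lemmas forbid 2- and 3-vertices outright, forcing $G$ to be 4-regular and hence $\mad(G)=4$, a contradiction---and the discharging never requires a $5^{+}$-vertex to exist.
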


A graph $G$ with maximum degree at most $\kappa$ is {\em $\kappa$-deletion-minimal} if $\chiup_{a}'(G) > \kappa$ and $\chiup_{a}'(H) \leq \kappa$ for every proper subgraph $H$ of $G$. A graph property $\mathcal{P}$ is {\em deletion-closed} if $\mathcal{P}$ is closed under taking subgraphs.

In section \ref{SL}, we provide many structural lemmas on $\kappa$-deletion-minimal graphs. In section \ref{MResults}, we firstly prove that AECC is true for the graphs with maximum average degree less than four (\autoref{NMAD4}). We secondly prove that AECC is true for the planar graphs without triangles adjacent to cycles of length at most four, with an additional condition that every $5$-cycle has at most three edges contained in triangles (\autoref{NoAdjacent}), from which we can conclude some known results as corollaries. We thirdly prove that every planar graph $G$ without intersecting triangles satisfies $\chiup_{a}'(G) \leq \Delta(G) + 3$ (\autoref{NoIntersect}). In section \ref{ConcludingR}, we consider one extreme case and prove it: if $G$ is a graph with $\Delta(G) \geq 3$ and all the $3^{+}$-vertices are independent, then $\chiup_{a}'(G) = \Delta(G)$.

\section{Preliminary}
Let $G$ be a graph and $H$ be a subgraph of $G$. An acyclic edge coloring of $H$ is a {\em partial acyclic edge coloring} of $G$. Let $\mathcal{U}_{\phi}(v)$ denote the set of colors which are assigned to the edges incident with $v$ with respect to $\phi$. Let $C_{\phi}(v) = [\kappa] \setminus \mathcal{U}_{\phi}(v)$ and $\Upsilon_{\phi}(uv) = \mathcal{U}_{\phi}(v) \setminus \{\phi(uv)\}$. Let $W_{\phi}(uv) = \{u_{i} \mid uu_{i} \in E(G) \mbox{ and } \phi(uu_{i}) \in \Upsilon_{\phi}(uv)\}$. Notice that $W_{\phi}(uv)$ may be not same with $W_{\phi}(vu)$. An $(\alpha, \beta)$-maximal dichromatic path with respect to $\phi$ is a maximal path whose edges are colored by $\alpha$ and $\beta$ alternately. An $(\alpha, \beta, u, v)$-critical path with respect to $\phi$ is an $(\alpha, \beta)$-maximal dichromatic path which starts at $u$ with color $\alpha$ and ends at $v$ with color $\alpha$. An $(\alpha, \beta, u, v)$-alternating path with respect to $\phi$ is an $(\alpha, \beta)$-dichromatic path starting at $u$ with color $\alpha$ and ending at $v$ with color $\beta$.

Let $\phi$ be a partial acyclic edge coloring of $G$. A color $\alpha$ is {\em candidate} for an edge $e$ in $G$ with respect to a partial edge coloring of
$G$ if none of the adjacent edges of $e$ is colored with $\alpha$.  A candidate color $\alpha$ is {\em valid} for an edge $e$ if assigning the color $\alpha$ to $e$ does not result in any dichromatic cycle in $G$.

\begin{fact}[\cite{MR2817509}]%
Given partial acyclic edge coloring of $G$ and two colors $\alpha, \beta$, there exists at most one $(\alpha, \beta)$-maximal path containing a particular vertex $v$. \qed
\end{fact}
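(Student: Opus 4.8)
The plan is to reduce the statement to a single observation about the subgraph spanned by the two color classes. First I would let $H$ be the spanning subgraph of $G$ consisting of exactly those edges that receive color $\alpha$ or color $\beta$ under the partial coloring $\phi$. Since $\phi$ is a proper edge coloring, every vertex $w$ is incident with at most one edge of color $\alpha$ and at most one edge of color $\beta$, so $\deg_H(w) \leq 2$ for every $w$. Because $\phi$ is acyclic, the subgraph induced by the two color classes $\alpha$ and $\beta$ is a linear forest and hence contains no cycle. Combining the degree bound with acyclicity, $H$ is a disjoint union of (possibly trivial) paths.

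The next step is to note that within each path component of $H$ the colors are forced to alternate between $\alpha$ and $\beta$: two consecutive edges sharing a vertex cannot receive the same color by properness, and only $\alpha$ and $\beta$ occur in $H$. Consequently every nontrivial component of $H$ is itself an $(\alpha,\beta)$-maximal dichromatic path, and conversely every $(\alpha,\beta)$-maximal dichromatic path is exactly one such component, since \emph{maximality} forbids extending the path any further inside its component.

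To finish, I would invoke the fact that the components of $H$ partition $V(G)$. The vertex $v$ lies in precisely one component $P$ of $H$. If $P$ is nontrivial, then $P$ is the unique $(\alpha,\beta)$-maximal path through $v$; if $P$ is trivial, i.e. $v$ is incident with no edge colored $\alpha$ or $\beta$, then no such path passes through $v$ at all. In either case the number of $(\alpha,\beta)$-maximal paths containing $v$ is at most one, which is the desired conclusion.

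I do not expect a genuine obstacle here, as the result becomes immediate once one views the $\alpha$- and $\beta$-edges as a single subgraph $H$ and uses that $H$ is a linear forest. The only point requiring a little care is the interplay between \emph{maximal} and \emph{alternating}: one must check that maximality inside $H$ and the alternating condition coincide, which holds because properness already forces alternation along any path in $H$, so that no second maximal alternating path can branch off from the component containing $v$.
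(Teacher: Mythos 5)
Your proof is correct: viewing the $\alpha$- and $\beta$-colored edges as a single subgraph $H$, properness bounds every degree in $H$ by two, acyclicity of the pair of color classes rules out cycles, so $H$ is a linear forest whose components partition $V(G)$, and the component containing $v$ is the only possible $(\alpha,\beta)$-maximal path through $v$. The paper itself states this fact with a citation and an immediate \qed, offering no proof of its own, so your elementary argument simply supplies the standard justification behind the cited fact and there is no divergence of approach to report.
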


\begin{fact}[\cite{MR2817509}]%
Let $G$ be a $\kappa$-deletion-minimal graph and $uv$ be an edge of $G$. If $\phi$ is an acyclic edge coloring of $G - uv$, then no candidate color for $uv$ is valid. Furthermore, if $\mathcal{U}(u) \cap \mathcal{U}(v) = \emptyset$, then $\deg(u) + \deg(v) = \kappa + 2$; if $|\mathcal{U}(u) \cap \mathcal{U}(v)| = s$, then $\deg(u) + \deg(v) + \sum\limits_{w \in W(uv)} \deg(w) \geq \kappa+2s+2$. \qed
\end{fact}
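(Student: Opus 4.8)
The plan is to read everything off the $\kappa$-deletion-minimality of $G$ together with the structure of the dichromatic cycles that coloring the missing edge $uv$ would create. Since $G-uv$ is a proper subgraph, minimality gives an acyclic edge coloring $\phi$ of $G-uv$ with colors from $[\kappa]$, and a \emph{candidate} color for $uv$ is exactly a color in $[\kappa]\setminus(\mathcal{U}(u)\cup\mathcal{U}(v))$. The first claim, that no candidate color is valid, is then immediate: if some candidate $\alpha$ were valid, assigning $\alpha$ to $uv$ would extend $\phi$ to an acyclic edge coloring of $G$ using only $\kappa$ colors, contradicting $\chiup_{a}'(G)>\kappa$. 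No computation is needed here.

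The engine behind both numerical bounds is a characterization of \emph{invalidity}. If a candidate $\alpha$ is invalid, then coloring $uv$ by $\alpha$ (which is proper, as $\alpha\notin\mathcal{U}(u)\cup\mathcal{U}(v)$) must create a dichromatic cycle, necessarily an $(\alpha,\gamma)$-cycle through $uv$ for some second color $\gamma$. At both $u$ and $v$ the cycle edge other than $uv$ is colored $\gamma$, so $\gamma\in\mathcal{U}(u)\cap\mathcal{U}(v)$; equivalently, $\phi$ already contains a $(\gamma,\alpha,u,v)$-critical path. I would then record the two crucial endpoints of this path near $u$: its first edge $uu_{i}$ is colored $\gamma$, so $u_{i}\in W(uv)$, while its second edge (incident to $u_{i}$) is colored $\alpha$, so $\alpha\in\mathcal{U}(u_{i})\setminus\{\gamma\}$. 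In this way every invalid candidate is \emph{charged} to some vertex of $W(uv)$, and $|W(uv)|=s$ because the $s$ common colors correspond bijectively to the $s$ edges at $u$ whose colors also appear at $v$.

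For the case $\mathcal{U}(u)\cap\mathcal{U}(v)=\emptyset$, the characterization shows there is no admissible second color $\gamma$, so \emph{every} candidate would be valid; by the first claim this is impossible, hence there are no candidates at all and $\mathcal{U}(u)\cup\mathcal{U}(v)=[\kappa]$. Under disjointness this reads $(\deg(u)-1)+(\deg(v)-1)=\kappa$, giving $\deg(u)+\deg(v)=\kappa+2$. For the general case with $|\mathcal{U}(u)\cap\mathcal{U}(v)|=s$, I would count candidates two ways. On one hand their number is $\kappa-|\mathcal{U}(u)\cup\mathcal{U}(v)|=\kappa-\deg(u)-\deg(v)+2+s$. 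On the other hand, using the charging map into $W(uv)$ and the fact (Fact~1) that the maximal dichromatic path through a vertex is unique, the distinct candidates charged to a fixed $u_{i}$ are the colors of distinct edges at $u_{i}$ other than $uu_{i}$, so at most $\deg(u_{i})-1$ candidates are charged to each $u_{i}$; summing over $W(uv)$ bounds the candidate count by $\sum_{w\in W(uv)}\deg(w)-s$. Comparing the two expressions yields $\deg(u)+\deg(v)+\sum_{w\in W(uv)}\deg(w)\geq\kappa+2s+2$.

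The main obstacle is the charging/counting step rather than the coloring-extension arguments. I must verify that each invalid candidate genuinely forces a critical path that \emph{begins} at $u$ along an edge into $W(uv)$, that its second edge pins down the candidate color at that neighbor, and that these second edges are pairwise distinct across candidates charged to the same $u_{i}$, so that the per-vertex bound $\deg(u_{i})-1$ is legitimate. One also needs the small but essential check that $u_{i}\neq v$ (otherwise $uu_{i}=uv$ would be uncolored), which guarantees that all $\deg(u_{i})$ edges at $u_{i}$ are colored and that $|W(uv)|=s$ exactly.
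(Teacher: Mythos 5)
Your proof is correct. The paper itself states this fact without proof, citing \cite{MR2817509} and marking it \qed, so there is no internal argument to compare against; your argument is the standard one behind that citation. The key steps all check out: every invalid candidate $\alpha$ forces a $(\gamma,\alpha,u,v)$-critical path with $\gamma\in\mathcal{U}(u)\cap\mathcal{U}(v)$, whence the disjoint case gives $\mathcal{U}(u)\cup\mathcal{U}(v)=[\kappa]$ and the equality $\deg(u)+\deg(v)=\kappa+2$; and in the general case the double count of candidates, $\kappa-\deg(u)-\deg(v)+2+s\leq\sum_{w\in W(uv)}(\deg(w)-1)=\sum_{w\in W(uv)}\deg(w)-s$, rearranges exactly to the stated inequality, with the per-vertex bound $\deg(u_i)-1$ justified because the candidates charged to $u_i$ are distinct colors of $\mathcal{U}(u_i)\setminus\{\phi(uu_i)\}$.
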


We remind the readers that we will use these two facts frequently, so please keep these in mind and we will not refer it at every time. In the following sections, if there is no confusion, we omit the subscript $\phi$. When we say ``acyclic edge coloring'' it means acyclic edge coloring with at most $\kappa$ colors.
\section{Structural lemmas}\label{SL}
In this section, we provide many structural lemmas on $\kappa$-deletion-minimal graphs. Throughout this section, we assume that $G$ is a $\kappa$-deletion-minimal graph and $\kappa$ is an integer.

\begin{lemma}\label{kappa=2}%
If $G$ is a $\kappa$-deletion-minimal graph, then $G$ is $2$-connected.
\end{lemma}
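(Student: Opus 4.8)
The plan is to prove that a $\kappa$-deletion-minimal graph $G$ is $2$-connected by ruling out the two ways connectivity can fail: $G$ could be disconnected, or $G$ could be connected but have a cut vertex. The overall strategy is the standard minimality argument: in each failure case I would exhibit a proper subgraph decomposition, use minimality to acyclically edge-color the pieces with $\kappa$ colors, and then glue these colorings into an acyclic edge coloring of the whole graph $G$, contradicting $\chiup_{a}'(G) > \kappa$.

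First I would observe that $G$ must be connected. If $G$ had two or more components, each component is a proper subgraph (provided $G$ has at least one edge, which it does since $\chiup_{a}'(G) > \kappa \geq 1$), so by minimality each component $H$ satisfies $\chiup_{a}'(H) \leq \kappa$. The disjoint union of these colorings is an acyclic edge coloring of $G$ with $\kappa$ colors, because a bichromatic cycle or a bichromatic path of length three would have to live inside a single component. This contradicts $\chiup_{a}'(G) > \kappa$, so $G$ is connected. A similar argument disposes of isolated vertices if one wishes to be careful about the definition.

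Next I would suppose for contradiction that $G$ is connected but has a cut vertex $v$, so that $G = G_{1} \cup G_{2}$ where $G_{1}$ and $G_{2}$ share only the vertex $v$ and each $G_{i}$ is a proper subgraph of $G$ containing at least one edge incident to $v$. By minimality each $G_{i}$ admits an acyclic edge coloring $\phi_{i}$ with the color set $[\kappa]$. The key freedom is that we may permute the colors in $\phi_{2}$: since $\deg_{G_{1}}(v) + \deg_{G_{2}}(v) = \deg_{G}(v) \leq \Delta(G) \leq \kappa$, there is room to recolor $G_{2}$ so that the colors appearing on edges of $G_{1}$ at $v$ are disjoint from those appearing on edges of $G_{2}$ at $v$. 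After this permutation the combined coloring $\phi$ is a proper edge coloring of $G$.

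The main obstacle, and the step requiring genuine care, is verifying that the glued coloring is acyclic. I expect any bichromatic cycle in $(G,\phi)$ to pass through the cut vertex $v$ (since within each $G_{i}$ the coloring $\phi_{i}$ is already acyclic, no bichromatic cycle lies entirely in one side). But any cycle through the cut vertex $v$ must use two edges at $v$, and by the color-disjointness arrangement those two edges on opposite sides of the cut receive colors from disjoint palettes, so no two-colored cycle can cross the cut. Hence $(G,\phi)$ is an acyclic edge coloring with $\kappa$ colors, contradicting $\chiup_{a}'(G) > \kappa$. This forces $G$ to have no cut vertex, and together with connectedness we conclude $G$ is $2$-connected.
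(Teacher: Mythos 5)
The paper states this lemma without a proof, so there is no in-paper argument to compare against; your decomposition-and-gluing strategy is the standard way to prove it, and most of your write-up is sound. The reduction to connectedness, the choice of $G_{1}$ and $G_{2}$ meeting only at the cut vertex $v$, and the permutation of the colors of $\phi_{2}$ --- possible because $\deg_{G_{1}}(v) + \deg_{G_{2}}(v) = \deg_{G}(v) \leq \Delta(G) \leq \kappa$ --- so that the two palettes at $v$ are disjoint and the glued coloring is proper, are all correct.

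The step that is wrongly justified is the acyclicity of the glued coloring. You claim that a bichromatic cycle crossing the cut would use two edges at $v$ whose colors lie in disjoint palettes, ``so no two-colored cycle can cross the cut.'' That inference is a non-sequitur: in \emph{any} proper edge coloring the two edges of a cycle incident with $v$ receive distinct colors, and a bichromatic cycle through $v$ uses exactly the two colors on those two edges, so disjointness of the palettes at $v$ does not by itself forbid such a cycle. What actually forbids it is purely structural and has nothing to do with the coloring: a cycle visits $v$ at most once, so a cycle containing vertices of both $G_{1} - v$ and $G_{2} - v$ would, after deleting $v$, leave a path joining the two sides inside $G - v$, contradicting that $v$ is a cut vertex. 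Hence \emph{every} cycle of $G$ lies entirely in $G_{1}$ or entirely in $G_{2}$, where your coloring restricts to a color-permutation of the acyclic coloring $\phi_{i}$; therefore no bichromatic cycle exists at all. With this substitution your proof is complete, and note that the palette-disjointness at $v$ is still genuinely needed --- but only for properness, where you already used it correctly.
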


\begin{lemma}[Hou \etal \cite{MR2849391}]\label{DegreeSum}
If $G$ is a $\kappa$-deletion-minimal graph and $w_{0}$ is a vertex in $G$, then
\[
\sum_{w \in N_{G}(w_{0})} \deg(w) \geq \kappa + \deg(w_{0}).
\]
\end{lemma}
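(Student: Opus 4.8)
The plan is to delete the vertex $w_0$ and analyze why the resulting coloring fails to extend. Write $\deg(w_0) = d$ and let $w_1, \dots, w_d$ be the neighbors of $w_0$; by \autoref{kappa=2} we have $d \geq 2$, so $G - w_0$ is a proper subgraph and hence admits an acyclic edge coloring $\phi$ with at most $\kappa$ colors. In this coloring each $w_i$ has lost exactly the edge $w_0 w_i$, so $|\mathcal{U}_{\phi}(w_i)| = \deg(w_i) - 1$ and the colors it uses are pairwise distinct. Consequently the desired inequality $\sum_{i} \deg(w_i) \geq \kappa + d$ is \emph{equivalent} to the purely color-theoretic statement $\sum_{i=1}^{d} |\mathcal{U}_{\phi}(w_i)| \geq \kappa$. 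I would prove the latter by contradiction: assuming $\sum_{i} |\mathcal{U}_{\phi}(w_i)| \leq \kappa - 1$, I would show that $\phi$ can be extended to an acyclic edge coloring of $G$ with $\kappa$ colors, contradicting $\chiup_{a}'(G) > \kappa$.

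To extend $\phi$ I must assign to the $d$ edges $w_0 w_1, \dots, w_0 w_d$ pairwise distinct colors $c_1, \dots, c_d$ with $c_i \notin \mathcal{U}_{\phi}(w_i)$ (properness) and creating no new bichromatic cycle (acyclicity). Since every cycle of $G$ that is absent from $G - w_0$ must pass through $w_0$, a new bichromatic cycle can only arise from two of the new edges, say $w_0 w_i$ and $w_0 w_j$, together with a $(c_i, c_j)$-alternating path joining $w_i$ and $w_j$ in $G - w_0$. Such a path can exist only if $c_j \in \mathcal{U}_{\phi}(w_i)$ \emph{and} $c_i \in \mathcal{U}_{\phi}(w_j)$, which already indicates that steering the $c_i$ away from the neighboring color sets is the mechanism that prevents cycles, and the budget $\sum_{i} |\mathcal{U}_{\phi}(w_i)| \leq \kappa - 1$ is exactly what keeps enough such colors available.

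Concretely, I would color the edges one at a time in a carefully chosen order and, at the step coloring $w_0 w_i$, bound the number of forbidden colors: those already used at $w_0$, those in $\mathcal{U}_{\phi}(w_i)$, and those that would close a bichromatic cycle with some previously colored $w_0 w_j$. The last family is the crux, and the first of the two Facts above is the key tool: since there is at most one maximal $(\alpha, \beta)$-dichromatic path through a given vertex, each previously used color $c_j$ lying in $\mathcal{U}_{\phi}(w_i)$ pins down the alternating path emanating from $w_i$, so the cycle-creating colors can be charged against the sets $\mathcal{U}_{\phi}(w_j)$ rather than counted freely. When a greedy choice is blocked, I would recolor along a suitable critical path to liberate a color, exactly the local move underlying the second Fact.

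The step I expect to be the main obstacle is controlling acyclicity during the extension: properness alone reduces to a routine system-of-distinct-representatives argument, but ruling out bichromatic cycles for all $\binom{d}{2}$ color pairs simultaneously is delicate, because the naive bound on cycle-creating colors can exceed $\kappa$. I expect to overcome this by ordering the edges so that the neighbors with larger $|\mathcal{U}_{\phi}(w_i)|$ are colored first, by charging every potential cycle to the unique dichromatic path guaranteed by the first Fact, and, when a color is blocked, by swapping colors along a critical path to restore acyclicity.
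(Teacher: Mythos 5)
Your reduction is sound: after deleting $w_{0}$, the desired inequality is indeed equivalent to $\sum_{i}|\mathcal{U}_{\phi}(w_{i})| \geq \kappa$, and your observation that a bichromatic cycle through the new edges $w_{0}w_{i}$ and $w_{0}w_{j}$ forces both $c_{j} \in \mathcal{U}_{\phi}(w_{i})$ and $c_{i} \in \mathcal{U}_{\phi}(w_{j})$ is exactly the right lever. (For comparison: the paper does not prove this lemma at all, it quotes it from \cite{MR2849391}; the short route there is to delete a single edge $w_{0}u$, apply the counting in Fact~2, and add at least $2$ for each remaining neighbor using $2$-connectivity. Your vertex-deletion route is genuinely different.) The gap is in your third paragraph: the greedy scheme you actually describe --- forbid the colors already used at $w_{0}$, the set $\mathcal{U}_{\phi}(w_{i})$, and the cycle-creating colors --- has no valid count, and your proposed remedies do not close it. The cycle-creating colors contributed by a previously colored $w_{0}w_{j}$ can only be bounded by $|\mathcal{U}_{\phi}(w_{j})|$, so at the last step the forbidden set is only bounded by $(d-1) + \sum_{j}|\mathcal{U}_{\phi}(w_{j})| \leq \kappa + d - 2$, which is at least $\kappa$; ordering by decreasing palette size does not repair this (take all palettes of equal size $(\kappa-1)/d$ with $d$ about $\sqrt{\kappa}$, where the bound still exceeds $\kappa-1$ at the final steps), and ``swapping along a critical path'' is not a well-defined move here: such a recoloring can create new bichromatic cycles among edges colored earlier, and you state no invariant controlling this.

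The fix is a small but essential change of the forbidden set, after which no ordering, no swaps, and no appeal to Fact~1 are needed: when coloring $w_{0}w_{k}$, avoid the palettes of \emph{all not-yet-colored neighbors} as well, i.e.\ choose
\[
c_{k} \notin F_{k} = \{c_{1}, \dots, c_{k-1}\} \cup \bigcup_{i \geq k} \mathcal{U}_{\phi}(w_{i}).
\]
By \autoref{kappa=2} every neighbor of $w_{0}$ has degree at least two, so $|\mathcal{U}_{\phi}(w_{i})| \geq 1$ for every $i$; hence $\sum_{i \geq k}|\mathcal{U}_{\phi}(w_{i})| \leq (\kappa - 1) - (k-1)$ under your hypothesis $\sum_{i}|\mathcal{U}_{\phi}(w_{i})| \leq \kappa - 1$, and therefore $|F_{k}| \leq (k-1) + (\kappa - 1) - (k-1) = \kappa - 1 < \kappa$, so $c_{k}$ exists. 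Properness is immediate, and no bichromatic cycle can arise: a cycle through $w_{0}w_{j}$ and $w_{0}w_{k}$ with $j < k$ would require $c_{j} \in \mathcal{U}_{\phi}(w_{k})$, whereas $c_{j}$ was chosen outside $\bigcup_{i \geq j}\mathcal{U}_{\phi}(w_{i}) \supseteq \mathcal{U}_{\phi}(w_{k})$. This completes your argument and proves the lemma; with this correction your proof is self-contained and arguably more elementary than the edge-deletion argument, since it never invokes the quantitative part of Fact~2.
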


\begin{lemma}\label{2+edge}%
Let $G$ be a $\kappa$-deletion-minimal graph. If $v$ is adjacent to a $2$-vertex $v_{0}$ and $N_{G}(v_{0}) = \{w, v\}$, then $v$ is adjacent to at least $\kappa - \deg(w) + 1$ vertices of degree at least $\kappa - \deg(v) + 2$. Moreover,
\begin{enumerate}[label=(\Alph*)]%
\item\label{A} if $\kappa \geq \deg(v) + 1$ and $wv \in E(G)$, then $v$ is adjacent to at least $\kappa - \deg(w) + 2$ vertices of degree at least $\kappa - \deg(v) + 2$, and $\deg(v) \geq \kappa - \deg(w) + 3$;
\item\label{B} if $\kappa \geq \Delta(G) + 2$ and $v$ is adjacent to precisely $\kappa - \Delta(G) + 1$ vertices of degree at least $\kappa - \Delta(G) + 2$, then $v$ is adjacent to at most $\deg(v) + \Delta(G) - \kappa - 3$ vertices of degree two and $\deg(v) \geq \kappa - \Delta(G) + 4$.
\end{enumerate}
\end{lemma}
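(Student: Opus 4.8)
The plan is to isolate the $2$-vertex $v_0$ and exploit the freedom of recolouring the \emph{other} edge at $v_0$. First I would delete $v_0v$, fix an acyclic edge colouring $\phi$ of $G - v_0v$, and set $c_0 = \phi(v_0w)$. Since $v_0$ has degree one in $G-v_0v$, recolouring $v_0w$ with a colour $\beta$ never creates a dichromatic cycle, so it is admissible exactly when $\beta \in ([\kappa]\setminus\mathcal{U}(w))\cup\{c_0\}$, a set of size $\kappa - \deg(w) + 1$. For each admissible $\beta$, minimality of $G$ forbids extending the colouring to $v_0v$, so no candidate colour is valid there. If $\beta \notin \mathcal{U}(v)$, the degree identity forces $\deg(v_0)+\deg(v) = \kappa+2$, i.e. $\deg(v)=\kappa$ (an easy case: then $\kappa-\deg(v)+2=2$, and by $2$-connectivity all $\kappa$ neighbours have degree at least $2$). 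Otherwise $\beta \in \mathcal{U}(v)$; let $u_\beta$ be the neighbour with $\phi(vu_\beta)=\beta$. Because $\deg(v_0)=2$, every dichromatic cycle blocking a candidate colour $\alpha$ must use both edges at $v_0$ and is therefore an $(\alpha,\beta)$-cycle; leaving $v$ it runs along $vu_\beta$ and then continues from $u_\beta$ in colour $\alpha$, so every candidate $\alpha \in [\kappa]\setminus\mathcal{U}(v)$ lies in $\mathcal{U}(u_\beta)$. With $\beta$ adjoined this gives $\deg(u_\beta) \geq (\kappa-\deg(v)+1)+1$. As distinct admissible $\beta$ yield distinct $u_\beta$, the $\kappa-\deg(w)+1$ admissible colours produce $\kappa-\deg(w)+1$ distinct neighbours of degree at least $\kappa-\deg(v)+2$, proving the base statement.

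For \ref{A}, the hypothesis $\kappa \geq \deg(v)+1$ rules out the case $\deg(v)=\kappa$, so in particular $c_0 \in \mathcal{U}(v)$. Applying the degree-sum fact to $v_0v$ with $c_0 \in \mathcal{U}(v)$ (here $s=1$ and $W(v_0v)=\{w\}$) gives $\deg(v_0)+\deg(v)+\deg(w)\geq\kappa+4$, hence $\deg(w)\geq\kappa-\deg(v)+2$. Since the edges $vw$ and $v_0w$ meet at $w$, their colours differ, so $\phi(vw)\neq c_0$ while $\phi(vw)\in\mathcal{U}(w)$; thus $\phi(vw)$ avoids the colour set $([\kappa]\setminus\mathcal{U}(w))\cup\{c_0\}$, and $w=u_{\phi(vw)}$ is distinct from all the $u_\beta$ above. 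This adjoins $w$ as an extra neighbour of degree at least $\kappa-\deg(v)+2$, giving $\kappa-\deg(w)+2$ such neighbours. Finally $v_0$ is a neighbour of $v$ of degree $2 < \kappa-\deg(v)+2$ (as $\kappa>\deg(v)$), so $v_0$ is none of these; counting it separately yields $\deg(v)\geq(\kappa-\deg(w)+2)+1 = \kappa-\deg(w)+3$.

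For \ref{B} we have $\deg(v)\leq\Delta<\kappa$, so the base statement applies to every $2$-neighbour $v_0$ with partner $w$ and certifies $\kappa-\deg(w)+1$ neighbours of degree at least $\kappa-\deg(v)+2\geq\kappa-\Delta+2$. As there are \emph{precisely} $\kappa-\Delta+1$ neighbours of degree at least $\kappa-\Delta+2$, we get $\kappa-\deg(w)+1\leq\kappa-\Delta+1$, forcing $\deg(w)=\Delta$; and $wv\notin E(G)$, for otherwise \ref{A} would certify $\kappa-\Delta+2$ such neighbours, a contradiction. The same tightness shows every neighbour of $v$ has degree either at least $\kappa-\deg(v)+2$ or at most $\kappa-\Delta+1$, with exactly $\kappa-\Delta+1$ of the former type. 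Part \ref{B}(ii) then drops out of \ref{B}(i): since $v_0$ is a degree-two neighbour, \ref{B}(i) gives $1\leq\deg(v)+\Delta-\kappa-3$, i.e. $\deg(v)\geq\kappa-\Delta+4$. The whole content of \ref{B}(i) is therefore to show that, in addition to the $\kappa-\Delta+1$ big neighbours and the degree-two neighbours, $v$ has at least two neighbours of intermediate degree (between $3$ and $\kappa-\Delta+1$), which gives $\deg(v)\geq(\kappa-\Delta+1)+(\text{\#degree-two})+2$.

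I expect this last step to be the main obstacle. The clean engine above only certifies $\kappa-\deg(w)+1$ big neighbours, so squeezing out the two further intermediate neighbours requires a more delicate recolouring: assuming at most one intermediate neighbour makes the number of degree-two neighbours maximal, and I would aim for a contradiction by manufacturing a $(\kappa-\Delta+2)$-th neighbour of degree at least $\kappa-\Delta+2$. A promising lever is the rigidity that the unique $c_0$-coloured edge $vv_1$ at $v$ cannot be recoloured to any colour other than $c_0$ (otherwise $c_0$ would leave $\mathcal{U}(v)$ and $v_0v$ would become colourable, since no $(\cdot,c_0)$-cycle could then pass through $v$); propagating this rigidity through the many degree-two neighbours, whose partners all have degree $\Delta$ and are non-adjacent to $v$, should overdetermine the colours at those partners and force the two intermediate neighbours. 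Carrying this propagation out carefully, rather than the routine counting, is where the real work lies.
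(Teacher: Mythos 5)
Your proof of the base statement and of part (A) is correct and is essentially the paper's argument in different clothing: you delete the edge $vv_{0}$ and sweep the pendant edge $wv_{0}$ through the $\kappa-\deg(w)+1$ admissible colours $\beta$, whereas the paper deletes the vertex $v_{0}$ and tries pairs of colours on $(wv_{0},vv_{0})$; in both versions the failure of every candidate colour forces $C(v)\subseteq\Upsilon(vu_{\beta})$ for every admissible $\beta$, which certifies the $\kappa-\deg(w)+1$ big neighbours, and your two additional observations in (A) (that $w$ is a further big neighbour because $\phi(vw)\in\mathcal{U}(w)\setminus\{c_{0}\}$, and that $v_{0}$ is none of them) are exactly the paper's.

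Part (B), however, is not proved. You correctly extract the tightness consequences ($\deg(w)=\Delta(G)$, $wv\notin E(G)$), correctly note that $\deg(v)\geq\kappa-\Delta(G)+4$ follows from the bound on $2$-neighbours because $v_{0}$ is such a neighbour, and correctly identify the remaining content: among the neighbours of degree at most $\kappa-\Delta(G)+1$, at least two must have degree at least three. But precisely there you stop, offering only a plan (``propagating this rigidity \dots should overdetermine the colours \dots is where the real work lies''). That missing step is the bulk of the lemma; in the paper it takes roughly two pages and uses a mechanism that is genuinely different from your ``rigidity'' lever. Concretely, the paper (i) upgrades the alternating-path condition so that it holds for every colour $j$ with $m+1\leq j\leq n$, not only for $j\in C(v)$, which pins down $\deg(v_{i})=\Delta(G)$ and $\Upsilon(vv_{i})=\{m+1,\dots,\kappa\}$ for all $i\leq m$; (ii) swaps the colours on pairs $vv_{i},vv_{j}$ with $i,j\leq m$, choosing the pair so that no dichromatic cycle passes through $vv_{m+1}$; (iii) analyses the dichromatic cycles such a swap creates through $vv_{1}$ and $vv_{2}$, encoded by two sets $T_{1},T_{2}\subseteq\{m+2,\dots,\kappa\}$ that are disjoint exactly because the $v_{i}$ with $i\geq m+2$ are $2$-vertices, and renormalizes the colouring so that $|T_{1}|,|T_{2}|\leq 1$; and (iv) finishes by recolouring at the partners $w_{m+2},w_{m+3}$ of the $2$-vertex neighbours, i.e.\ the contradiction is manufactured in the second neighbourhood of $v$, not at $v$ itself. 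Nothing in your sketch indicates how this swap-and-kill-cycles engine (or any substitute for it) would emerge from the rigidity of the $c_{0}$-coloured edge, so as it stands part (B) is a genuine gap.
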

\begin{proof}%
Since $G$ is $\kappa$-deletion-minimal, it follows that $G - v_{0}$ admits an acyclic edge coloring $\phi$. Without loss of generality, assume that $N_{G}(v) = \{v_{0}, v_{1}, \dots, v_{n}\}$ and $\phi(vv_{i}) = i$ for $1 \leq i \leq n$. If $\kappa = \deg(v)$, then \autoref{DegreeSum} applies. So we may assume that $\kappa > \deg(v)$ and $|C(v)| \geq 2$.

Suppose that $C(w) \cap C(v) \neq \emptyset$. Choose colors $\lambda_{1} \in C(w) \cap C(v)$ and $\lambda_{2} \in C(v) \setminus \{\lambda_{1}\}$. If assigning $\lambda_{1}$ to $wv_{0}$ and $\lambda_{2}$ to $vv_{0}$, we obtain an acyclic edge coloring of $G$, a contradiction. So we may assume that $C(w) \cap C(v) = \emptyset$. Thus $C(w) \subseteq \mathcal{U}(v)$ and $C(v) \subseteq \mathcal{U}(w)$. So we may assume that $C(w) = \{1, \dots, m\}$, where $m = \kappa -\deg(w) + 1$.

If there exists no $(\lambda, \lambda^{*}, v, w)$-alternating path with $\lambda \in C(w)$ and $\lambda^{*} \in C(v)$, then assigning $\lambda$ to $wv_{0}$ and $\lambda^{*}$ to $vv_{0}$ results in an acyclic edge coloring of $G$, a contradiction.

Hence, there exists an $(i, j, v, w)$-alternating path, where $i$ and $j$ are arbitrarily chosen from $C(w)$ and $C(v)$ respectively. Thus, we have $\{n+1, \dots, \kappa\} = C(v) \subseteq \Upsilon(vv_{i})$ for $1 \leq i \leq m$. Recall that $|C(w)| = m = \kappa -\deg(w) + 1$ and $|C(v)| = \kappa -\deg(v) + 1$, it follows that the vertex $v$ is adjacent to at least $\kappa - \deg(w) + 1$ vertices of degree at least $\kappa - \deg(v) + 2$.

(A) By \autoref{DegreeSum}, we have $\deg(w) \geq \kappa + 2 - \deg(v)$ and $w$ is a $(\kappa - \deg(v) + 2)^{+}$-vertex. From the above arguments, if $wv \in E(G)$, then $w \notin \{v_{1}, v_{2}, \dots, v_{m}\}$ and \ref{A} hold.

(B) Suppose that $\kappa \geq \Delta(G) + 2$ and $v$ is adjacent to precisely $\kappa - \Delta(G) + 1$ vertices of degree at least $\kappa - \Delta(G) + 2$. It follows that $\deg(w) = \Delta(G)$ and the precisely $\kappa - \Delta(G) + 1$ vertices of degree at least $\kappa - \Delta(G) + 2$ are $v_{1}, v_{2}, \dots, v_{m}$. By contradiction, we may assume that $v_{i}$ is a $2$-vertex and $N_{G}(v_{i}) = \{v, w_{i}\}$ for $m + 2 \leq i \leq n$. Note that $\{v_{0}, v_{m+2}, \dots, v_{n}\}$ is an independent set in $G$ by the $2$-connectivity, thus $w \notin \{v_{m+2}, \dots, v_{n}\}$.

\begin{claim}\label{VWPath}%
There exists an $(i, j, v, w)$-alternating path, where $i$ and $j$ are arbitrarily chosen from $C(w)$ and $\{m+1, \dots, n\}$.
\end{claim}
\begin{proof}
By symmetry, suppose that there exists no $(1, \alpha, v, w)$-alternating path. Removing the color $\alpha$ from $vv_{\alpha}$, assigning $1$ to $wv_{0}$ and $\alpha$ to $vv_{0}$, we obtain an acyclic edge coloring $\psi$ of $G - vv_{\alpha}$. By Fact 2 and $\deg(v) + \deg(v_{\alpha}) \leq \deg(v) + (\kappa - \deg(v) + 1) < \kappa + 2$, we have $\mathcal{U}_{\psi}(v) \cap \mathcal{U}_{\psi}(v_{\alpha}) \neq \emptyset$.

Suppose that $\Upsilon(vv_{\alpha}) \cap \{m+1, \dots, n\} = \emptyset$. We can extend $\psi$ by assigning a color in $C(v) \setminus \Upsilon(vv_{\alpha})$ to $vv_{\alpha}$, and thus obtain an acyclic edge coloring of $G$, a contradiction. Hence, we may assume that $\Upsilon(vv_{\alpha}) \cap \{m+1, \dots, n\} \neq \emptyset$. If $\deg(v_{\alpha}) = 2$, then $\Upsilon(vv_{\alpha}) = \{\alpha^{*}\} \subseteq \{m+1, \dots, n\}$, but every color in $\{n+1, \dots, \kappa\} \setminus \Upsilon(vv_{\alpha^{*}})$ is valid for $vv_{\alpha}$ with respect to $\psi$,  a contradiction. Hence, $v_{\alpha}$ must be a $3^{+}$-vertex and $\alpha = m + 1$. By symmetry, we may assume that $n \in \Upsilon(vv_{m+1})$. By Fact 2, we have $\Upsilon(vv_{m+1}) \cap \{1, \dots, m\} \neq \emptyset$. Hence, $\{n + 1, \dots, \kappa\} \setminus (\Upsilon(vv_{m+1}) \cup \{\phi(v_{i}w_{i}) \mid v_{i} \in W(vv_{\alpha}) \mbox{ and } i \geq m+2\}) \neq \emptyset$, but every color in this set is valid for $vv_{m + 1}$ with respect to $\psi$, a contradiction.
\end{proof}
Hence, $\deg(v_{i}) = \Delta(G)$ and $\Upsilon(vv_{i}) = \{m+1, \dots, \kappa\}$ for $1 \leq i \leq m$. Let $\phi_{i,j}$ be the proper edge coloring obtained from $\phi$ by exchanging the colors on $vv_{i}$ and $vv_{j}$ for $1 \leq i < j \leq m$.

\begin{claim}\label{Phi12}%
There exist $i_{0}$ and $j_{0}$ in $\{1, \dots, m\}$ such that the edge coloring $\phi_{i_{0}, j_{0}}$ has no dichromatic cycle containing the edge $vv_{m+1}$.
\end{claim}
\begin{proof}
The vertex $v_{m+1}$ is a vertex of degree at most $m$, thus there are at most $m-1$ critical paths containing $vv_{m+1}$ and passing through $v_{m+1}$. Hence, there exists a vertex in $\{v_{1}, v_{2}, \dots, v_{m}\}$, say $v_{1}$, such that there exists no critical path with respect to $\phi$ starting at $vv_{m+1}$ and ending at $v_{1}$. Therefore, $\phi_{1,2}$ is the desired edge coloring if there is no $(m+1, 1, v, v_{2})$-critical path with respect to $\phi$. Otherwise, $\phi_{1,3}$ is the desired edge coloring (note that $m \geq 3$).
\end{proof}

Without loss of generality, we may assume that $\phi_{1, 2}$ is the edge coloring obtained in \autoref{Phi12}. If $\phi_{1, 2}$ is an acyclic edge coloring of $G - v_{0}$, then we extend $\phi_{1, 2}$ by assigning $\kappa$ to $vv_{0}$ and $1$ to $wv_{0}$, and thus obtain an acyclic edge coloring of $G$, which is a contradiction. Hence, the proper edge coloring $\phi_{1, 2}$ is not an acyclic edge coloring of $G - v_{0}$, and then it admits dichromatic cycles containing $vv_{1}$ or $vv_{2}$ but not containing $vv_{m+1}$. Let $T_{1} = \{\theta \mid \mbox{ there exists a $(1, \theta)$-dichromatic cycle containing $vv_{2}$ with respect to $\phi_{1, 2}$}\}$. Let $T_{2} = \{\theta \mid \mbox{ there exists a $(2, \theta)$-dichromatic cycle containing $vv_{1}$ with respect to $\phi_{1, 2}$}\}$. Consequently, we have $T_{1} \cup T_{2} \subseteq \{m+2, \dots, \kappa\}$ and $T_{1} \cup T_{2} \neq \emptyset$. Note that $T_{1} \cap T_{2} = \emptyset$, since all the vertices $v_{i}$ with $i \geq m+2$ are $2$-vertices.

\begin{claim}%
We may assume that $|T_{1}| \leq 1$ and $|T_{2}| \leq 1$.
\end{claim}
\begin{proof}
Without loss of generality, we may assume that $T_{1} = \{m+2, \dots, r\}$ and $T_{2} = \{s, \dots, n\}$, where $r < s$ (note that $T_{1} \cap T_{2} = \emptyset$). If $|T_{1}| > 1$, then reassign $i+1$ to $vv_{i}$ for $m+2 \leq i \leq r-1$ and reassign $m+2$ to $vv_{r}$. Similarly, if $|T_{2}| > 1$, then reassign $j+1$ to $vv_{j}$ for $s \leq j \leq n-1$ and reassign $s$ to $vv_{n}$. Finally, we obtain a proper edge coloring $\phi^{*}$ of $G - v_{0}$ satisfying $|T_{1}| \leq 1$ and $|T_{2}| \leq 1$ with respect to $\phi^{*}$, but it has no dichromatic cycle containing $vv_{m+1}$.
\end{proof}

By symmetry, we may assume that $|T_{1}| \geq |T_{2}|$ and $T_{1} = \{m+2\}$. The following proof is divided into two cases.
\begin{enumerate}[leftmargin=*, label = {\bf Case B\arabic*}]%
\item\label{C1} $|T_{2}| = 0$.

We modify $\phi_{1, 2}$ by assigning $1$ to $wv_{0}$ and $m+2$ to $vv_{0}$, and removing the colors on $vv_{m+2}$ and $v_{m+2}w_{m+2}$, and thus obtain an acyclic edge coloring $\psi^{*}$ of $G - v_{m+2}$. By similar arguments as above, we have that $\deg(w_{m+2}) = \Delta(G)$ and $C_{\psi^{*}}(w_{m+2}) = \{1, \dots, m\}$. We extend $\psi^{*}$ by reassigning $\kappa$ to $vv_{m+2}$ and $3$ to $v_{m+2}w_{m+2}$ (note that $w_{m+2} \neq w$ since $1 \notin \mathcal{U}(w)$ and $1 \in \mathcal{U}(w_{m+2})$), and thus obtain an acyclic edge coloring of $G$, a contradiction.

\item $|T_{2}| = 1$.

Without loss of generality, we may assume that $T_{2} = \{m+3\}$. Obviously, $\phi(v_{m+2}w_{m+2}) = 1$ and $\phi(v_{m+3}w_{m+3}) = 2$. Note that $w \neq w_{m+2}$ and $w \neq w_{m+3}$ since $C(w) = \{1, \dots, m\}$ but $1 \in \mathcal{U}(w_{m+2})$ and $2 \in \mathcal{U}(w_{m+3})$. Suppose that there is a color $\theta$ in $C(w_{m+3})$ with $\theta \in \{3, \dots, m\} \cup \{n+1, \dots, \kappa\}$. We can modify $\phi_{1, 2}$ by reassigning $\theta$ to $v_{m+3}w_{m+3}$, the resulting edge coloring has similar properties as $\phi_{1, 2}$ and then we go back to \ref{C1}. Hence, $C(w_{m+3}) \subseteq \{1\} \cup \{m+1, \dots, n\}$, so we may assume that there exists a color $\theta$ in $C(w_{m+3})$ with $m+1 \leq \theta \leq n$. Since $\deg(v) + \deg(v_{m+3}) + \deg(v_{\theta}) \leq \deg(v) + 2 + (\kappa - \deg(v) + 1) < \kappa + 4$, we can modify $\phi_{1, 2}$ by reassigning $\theta$ to $v_{m+3}w_{m+3}$ and reassigning a suitable color to $vv_{m+3}$, such that the resulting proper edge coloring has no dichromatic cycle containing $vv_{m+3}$, then we go back to \ref{C1}.
\resetcounter\qedhere
\end{enumerate}
\end{proof}

\begin{lemma}\label{2++edge}%
Let $G$ be a $\kappa$-deletion-minimal graph with $\kappa \geq \Delta(G) + 2$. If $v_{0}$ is a $2$-vertex, then every neighbor of $v_{0}$ has degree at least $\kappa - \Delta(G) + 4$.
\end{lemma}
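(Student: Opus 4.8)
The plan is to fix a neighbour $v$ of $v_0$, write $N_G(v_0)=\{v,w\}$, and prove $\deg(v)\ge\kappa-\Delta(G)+4$; since $v$ and $w$ play symmetric roles as the two neighbours of the $2$-vertex $v_0$, establishing this for $v$ simultaneously gives it for $w$. If $\kappa-\Delta(G)+4>\Delta(G)$ the conclusion is unsatisfiable, but the case analysis below then forces the hypothesis of part~\ref{B} of \autoref{2+edge} and produces a contradiction, so $v_0$ has no neighbour and the lemma holds vacuously; hence I may concentrate on the meaningful regime $\kappa-\Delta(G)+4\le\Delta(G)$.

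First I would feed the hypothesis into \autoref{2+edge} applied to the $2$-vertex $v_0$ and its neighbour $v$: it yields that $v$ is adjacent to at least $\kappa-\deg(w)+1\ge\kappa-\Delta(G)+1$ vertices of degree at least $\kappa-\deg(v)+2\ge\kappa-\Delta(G)+2$. Because $\kappa\ge\Delta(G)+2$, every such vertex has degree at least $\kappa-\Delta(G)+2\ge 4>2$, so none of them equals $v_0$. Writing $p$ for the number of neighbours of $v$ of degree at least $\kappa-\Delta(G)+2$, I record $p\ge\kappa-\Delta(G)+1$ and, counting $v_0$ separately, the baseline bound $\deg(v)\ge p+1\ge\kappa-\Delta(G)+2$.

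The next step is a short case analysis on $p$. If $p\ge\kappa-\Delta(G)+3$ then $\deg(v)\ge p+1\ge\kappa-\Delta(G)+4$ and we are done by counting alone. If $p=\kappa-\Delta(G)+1$, then $v$ is adjacent to \emph{precisely} $\kappa-\Delta(G)+1$ vertices of degree at least $\kappa-\Delta(G)+2$, so part~\ref{B} of \autoref{2+edge} applies directly and gives $\deg(v)\ge\kappa-\Delta(G)+4$. The only surviving possibility is $p=\kappa-\Delta(G)+2$, in which case the baseline bound forces $\deg(v)\ge\kappa-\Delta(G)+3$; it then remains only to exclude $\deg(v)=\kappa-\Delta(G)+3$.

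This residual configuration is where I expect the genuine difficulty to lie, and it must be handled by a colouring argument rather than by counting, since neither \autoref{DegreeSum} nor part~\ref{B} applies to it. Here $v$ has exactly $\kappa-\Delta(G)+3$ neighbours, namely $v_0$ together with $\kappa-\Delta(G)+2$ vertices each of degree at least $\kappa-\Delta(G)+2$, and $v_0$ is the only $2$-vertex attached to $v$. To rule this out I would reuse the machinery behind part~\ref{B}: take an acyclic edge colouring $\phi$ of $G-v_0$ with $\phi(vv_i)=i$, note that $C(w)\cap C(v)=\emptyset$ and that, since no candidate colour is valid for $wv_0$ and $vv_0$ simultaneously, for every $i\in C(w)$ and $j\in C(v)$ there is an $(i,j,v,w)$-alternating path, so that each $v_i$ with $C(v)\subseteq\Upsilon(vv_i)$ is nearly saturated. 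The aim is to exchange the colours on a suitable pair $vv_i,vv_j$, possibly combined with recolouring one edge at a neighbour of $w$, so as to destroy every dichromatic cycle blocking the simultaneous colouring of $vv_0$ and $wv_0$, thereby extending $\phi$ to $G$ and contradicting $\kappa$-deletion-minimality. The main obstacle is orchestrating these exchanges so that no new dichromatic cycle is created through $vv_0$ while all the forced alternating paths to $w$ are broken at once; the extra high-degree neighbour tightens rather than loosens the colour budget, so one must exploit that $|C(v)|=\Delta(G)-2$ is comparatively large against the near-saturation of the $v_i$, and I expect the bookkeeping here to mirror the $T_1,T_2$ analysis in the proof of part~\ref{B}.
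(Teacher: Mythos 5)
Your reduction is sound and it matches the paper's opening move exactly: combining the main statement of \autoref{2+edge} with part~\ref{B} to eliminate $\deg(v)\leq\kappa-\Delta(G)+2$ and the easy counting cases, so that only the configuration $\deg(v)=\kappa-\Delta(G)+3$, with $v_{0}$ the unique $2$-neighbour of $v$ and all other neighbours of degree at least $\kappa-\Delta(G)+2$, survives. The paper compresses this into one sentence (``By contradiction and \autoref{2+edge}, we can suppose that $v$ is a $(\kappa-\Delta(G)+3)$-vertex''), and your more explicit case analysis on $p$ is a correct unpacking of it; the handling of the vacuous regime is also fine in substance.

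However, at that point your proposal stops being a proof. The residual case \emph{is} the proof of this lemma in the paper, and you offer only an intention for it: exchange colours on a suitable pair $vv_{i},vv_{j}$ and expect ``the bookkeeping to mirror the $T_{1},T_{2}$ analysis'' of part~\ref{B}. That expectation is misplaced, and this matters. The $T_{1},T_{2}$ argument inside part~\ref{B} works because the relevant neighbours $v_{i}$ ($i\geq m+2$) of $v$ are $2$-vertices; this is what yields $T_{1}\cap T_{2}=\emptyset$ and permits the colour rotations and the deletion of an edge $v_{i}w_{i}$ at a pendant-like $2$-vertex. In your residual configuration every $v_{i}$, $1\leq i\leq n$, has degree at least $\kappa-\Delta(G)+2\geq 4$ and $v_{0}$ is the only $2$-neighbour, so that mechanism is unavailable. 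What the paper actually does here is a different and longer chain: (i) show that every swapped colouring $\phi_{i,j}$, if proper, must contain a dichromatic cycle, since otherwise it extends to $G$; (ii) prove by a separate argument that $\{n+1,\dots,\kappa\}\subseteq\mathcal{U}(v_{n})$, whence $|\Upsilon(vv_{i})\cap\{1,\dots,n\}|\leq 1$ for every $i$; (iii) prove that $\Upsilon(vv_{n})\cap\{1,\dots,n\}=\emptyset$ would force $\Upsilon(vv_{i})=\{n,\dots,\kappa\}$ for all $i\leq n-1$ and hence a contradiction; and (iv) split into three cases according to how many indices $i\leq n-2$ satisfy $n\in\Upsilon(vv_{i})$, closing each case (and its subcases) with an explicit multi-edge recolouring such as reassigning $3,1,2$ to $vv_{1},vv_{2},vv_{3}$. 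None of these steps is present, or even correctly anticipated, in your sketch, so the proposal has a genuine gap precisely where the difficulty of the lemma lies.
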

\begin{proof}%
Let $N_{G}(v_{0}) = \{w, v\}$. By contradiction and \autoref{2+edge}, we can suppose that $v$ is a $(\kappa - \Delta(G) + 3)$-vertex and $N_{G}(v) = \{v_{0}, v_{1},\dots, v_{n}\}$, where $n = \kappa - \Delta(G) + 2 \geq 4$. Since $G$ is $\kappa$-deletion-minimal, it follows that $G - v_{0}$ admits an acyclic edge coloring $\phi$ with $\phi(vv_{i}) = i$ for $1 \leq i \leq n$, and $C(v) = \{n+1, \dots, \kappa\}$.

Suppose that $C(w) \cap C(v) \neq \emptyset$. Choose colors $\lambda_{1} \in C(w) \cap C(v)$ and $\lambda_{2} \in C(w) \setminus \{\lambda_{1}\}$, and extend $\phi$ by assigning $\lambda_{1}$ to $vv_{0}$ and $\lambda_{2}$ to $wv_{0}$, and thus we obtain an acyclic edge coloring of $G$, a contradiction. So we may assume that $C(w) \cap C(v) = \emptyset$. Consequently, $C(w) \subseteq \{1, \dots, n\}$ and $\{n+1, \dots, \kappa\} \subseteq \mathcal{U}(w)$. So we may assume that $\{1, \dots, n-1\} \subseteq C(w) \subseteq \{1, \dots, n\}$.

If there exists no $(\lambda, \lambda^{*}, v, w)$-alternating path with $\lambda \in C(w)$ and $\lambda^{*} \in C(v)$, then we extend $\phi$ by assigning $\lambda$ to $wv_{0}$ and $\lambda^{*}$ to $vv_{0}$, thus obtain an acyclic edge coloring of $G$, a contradiction.

Hence, there exists an $(i, j, v, w)$-alternating path, where $i$ and $j$ are arbitrarily chosen from $C(w)$ and $C(v)$ respectively. Thus, we have $\{n+1, \dots, \kappa\} = C(v) \subseteq \Upsilon(vv_{i})$ for $i \in C(w)$. 

Let $\phi_{i, j}$ denote the edge coloring obtained from $\phi$ by exchanging the colors on $vv_{i}$ and $vv_{j}$ for $1 \leq i < j \leq n$. Note that $1 \leq i \leq n-1$ and $i \in C(w)$.

\begin{claim}\label{PAEC}%
If the edge coloring $\phi_{i, j}$ is a proper edge coloring of $G - v_{0}$, then it must contain dichromatic cycles.
\end{claim}
\begin{proof}%
If $\phi_{i, j}$ is an acyclic edge coloring of $G - v_{0}$, then we extend $\phi_{i, j}$ by assigning $i$ to $wv_{0}$ and $\kappa$ to $vv_{0}$, and obtain an acyclic edge coloring of $G$, a contradiction.
\end{proof}

\begin{claim}%
$\{n+1, \dots, \kappa\} \subseteq \mathcal{U}(v_{n})$.
\end{claim}
\begin{proof}
If $C(w) = \{1, 2, \dots, n\}$, then we have finished it. By contradiction and symmetry, we may assume that $C(w) = \{1, 2, \dots, n-1\}$ and $\kappa \notin \mathcal{U}(v_{n})$. The fact that $\kappa \in \mathcal{U}(w)$ and $\kappa \notin \mathcal{U}(v_{n})$ imply $w \neq v_{n}$. Reassigning $n, \kappa$ and an arbitrary color $t$ in $C(w)$ to $vv_{0}, vv_{n}$ and $wv_{0}$ respectively, yields a proper edge coloring of $G$, which then must contain a $(n, t)$-dichromatic cycle containing $v_{0}$, otherwise it is an acyclic edge coloring of $G$, a contradiction. So we have that $n \in \mathcal{U}(v_{t})$ and $\Upsilon(vv_{t}) = \{n, \dots, \kappa\}$ for $1 \leq t \leq n - 1$. Hence, the edge colorings $\phi_{1, 2}, \phi_{2, 3}$ and $\phi_{1, 3}$ are proper edge colorings of $G - v_{0}$, and then there exist dichromatic cycles containing $vv_{n}$ with respect to each of these edge colorings by \autoref{PAEC}. Without loss of generality, we may assume that there exists a $(n, 2)$-dichromatic cycle with respect to $\phi_{1, 2}$; in other words, there exists a $(n, 2, v, v_{1})$-critical path with respect to $\phi$. Hence, there is no $(n, 2, v, v_{t})$-critical path with respect to $\phi$ for $2 \leq t \leq n-1$. There exists a $(n, 3)$-dichromatic cycle containing $vv_{n}$ with respect to $\phi_{2, 3}$, and then there exists a $(n, 3, v, v_{2})$-critical path with respect to $\phi$. Hence, there exists no $(n, 3)$-dichromatic cycle containing $vv_{n}$ with respect to $\phi_{1, 3}$, and thus there is a $(n, 1, v, v_{3})$-critical path with respect to $\phi$. Reassigning $3, 1, 2$ to $vv_{1}, vv_{2}, vv_{3}$ respectively, and assigning $1$ to $wv_{0}$ and $\kappa$ to $vv_{0}$, we obtain an acyclic edge coloring of $G$, a contradiction.
\end{proof}

Now, we have $\{n+1, \dots, \kappa\} \subseteq \Upsilon(vv_{i})$ for $1 \leq i \leq n$, and it follows that $|\Upsilon(vv_{i}) \cap \{1, \dots, n\}| \leq 1$ for $1 \leq i \leq n$.

\begin{claim}\label{ClaimE}%
If $\Upsilon(vv_{n}) \cap \{1, \dots, n\} = \emptyset$, then $\Upsilon(vv_{i}) = \{n, \dots, \kappa\}$ for $1 \leq i \leq n-1$.
\end{claim}
\begin{proof}
By contradiction and symmetry, assume that $n \notin \Upsilon(vv_{n-1})$. By \autoref{PAEC}, the proper edge coloring $\phi_{n-1, n}$ must contain dichromatic cycles. Note that there is no dichromatic cycle containing $vv_{n}$ with respect to $\phi_{n-1, n}$. It follows that there exists a $(n, i)$-dichromatic cycle containing $vv_{n-1}$ with respect to $\phi_{n-1, n}$, where $i \leq n - 2$. By symmetry, assume that there exists a $(n, 1)$-dichromatic cycle containing $vv_{n-1}$ with respect to $\phi_{n-1, n}$. Hence, $\Upsilon(vv_{n-1}) \cap \{1, \dots n\} = \{1\}$ and $\Upsilon(vv_{1}) \cap \{1, \dots, n\} = \{n\}$. If there exists a vertex $v_{j}$ with $2 \leq j \leq n-2$ such that $n \in \Upsilon(vv_{j})$, then $\phi_{1, j}$ is an acyclic edge coloring of $G - v_{0}$, which contradicts \autoref{PAEC}. Hence, we have $n \notin \Upsilon(vv_{j})$ for $2 \leq j \leq n-2$. Since there exists a $(1, n, v, v_{n-1})$-critical path with respect to $\phi$, thus $\phi_{2, n}$ is an acyclic edge coloring of $G - v_{0}$, which contradicts \autoref{PAEC} again.
\end{proof}

By \autoref{ClaimE}, if $\Upsilon(vv_{n}) \cap \{1, \dots, n\} =\emptyset$, then $\Upsilon(vv_{i}) = \{n, \dots, \kappa\}$ for $1 \leq i \leq n-1$, and then $\phi_{1, 2}$ is an acyclic edge coloring of $G - v_{0}$, which contradicts \autoref{PAEC}. So we may assume that $\Upsilon(vv_{n}) \cap \{1, \dots, n\} \neq \emptyset$. By symmetry, assume that $n-1 \in \Upsilon(vv_{n})$.

\begin{case}%
There exist two vertices $v_{i}$ and $v_{j}$ with $1 \leq i < j \leq n-2$ such that $n \in \Upsilon(vv_{i}) \cap \Upsilon(vv_{j})$. 
\end{case}
Without loss of generality, we may assume that $n \in \mathcal{U}(v_{1}) \cap \mathcal{U}(v_{2})$. The edge coloring $\phi_{1, 2}$ is an acyclic edge coloring of $G - v_{0}$, which contradicts \autoref{PAEC}.
\begin{case}%
For every vertex $v_{i}$ with $1 \leq i \leq n-2$, we have $n \notin \Upsilon(vv_{i})$.
\end{case}

If $1 \in \mathcal{U}(v_{n-1})$, then the proper edge coloring $\phi_{2, n}$ is an acyclic edge coloring of $G - v_{0}$, which contradicts \autoref{PAEC}. So we may assume that $1 \notin \mathcal{U}(v_{n-1})$. By \autoref{PAEC}, the proper edge coloring $\phi_{1, n}$ must contain $(n, n-1)$-dichromatic cycle containing $vv_{1}$ and $n \in \Upsilon(vv_{n-1})$, but $\phi_{2, n}$ is an acyclic edge coloring of $G - v_{0}$, which contradicts \autoref{PAEC}.

\begin{case}%
There is only one vertex $v_{i}$ with $1 \leq i \leq n-2$ such that $n \in \Upsilon(vv_{i})$. By symmetry, we may assume that $n \in \mathcal{U}(v_{2})$.
\end{case}

\begin{subcase}%
$\{1, n\} \subseteq C(v_{n-1})$.
\end{subcase}

By \autoref{PAEC}, the proper edge coloring $\phi_{1, n}$ must contain $(n, 2)$-dichromatic cycle containing $vv_{1}$. Hence, $2 \in \Upsilon(vv_{1})$. If $n = 4$, then $\phi_{1, 3}$ is an acyclic edge coloring of $G - v_{0}$, which contradicts \autoref{PAEC}. So we may assume that $n \geq 5$. The proper edge coloring $\phi_{1, n-1}$ must contain a dichromatic cycle containing $vv_{n-1}$, say $(1, i)$-dichromatic cycle, where $i \in \{3, \dots, n-2\}$. By symmetry, we may assume that there is a $(1, 3)$-dichromatic cycle with respect to $\phi_{1, n-1}$. Hence, $1 \in \Upsilon(vv_{3})$ and $3 \in \Upsilon(vv_{n-1})$. Reassigning $3, 1, 2$ to $vv_{1}, vv_{2}, vv_{3}$ respectively, and assigning $2$ to $wv_{0}$ and $\kappa$ to $vv_{0}$, we obtain an acyclic edge coloring of $G$, a contradiction.

\begin{subcase}%
$\{1, n\} \cap \Upsilon(vv_{n-1}) \neq \emptyset$.
\end{subcase}
Suppose that $2 \in \mathcal{U}(v_{1})$. Reassigning $n-1, 1$ and $2$ to $vv_{1}, vv_{2}$ and $vv_{n-1}$ respectively results in an acyclic edge coloring of $G - v_{0}$, and extending it by assigning $2$ to $wv_{0}$ and $\kappa$ to $vv_{0}$, we obtain an acyclic edge coloring of $G$, a contradiction. So we may assume that $2 \notin \mathcal{U}(v_{1})$. By \autoref{PAEC}, the proper edge coloring $\phi_{1, 2}$ admits a dichromatic cycle containing $vv_{1}$. Thus, $n \geq 5$. So we may assume that there is a $(2, 3)$-dichromatic cycle with respect to $\phi_{1, 2}$, which implies that $3 \in \Upsilon(vv_{1})$ and $2 \in \Upsilon(vv_{3})$. Reassigning $2, 3, 1$ to $vv_{1}, vv_{2}, vv_{3}$ respectively, and assigning $2$ to $wv_{0}$ and $\kappa$ to $vv_{0}$, we obtain an acyclic edge coloring of $G$, a contradiction.
\resetcounter
\end{proof}

\begin{lemma}\label{24edge}%
Let $G$ be a $\kappa$-deletion-minimal graph with $\kappa \geq \Delta(G) + 1$. If $v_{0}$ is a $2$-vertex, then every neighbor of $v_{0}$ has degree at least four.
\end{lemma}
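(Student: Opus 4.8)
The plan is to reduce to the one genuinely new case and then win with a single color swap. First, if $\kappa \geq \Delta(G)+2$, then \autoref{2++edge} already gives every neighbor of $v_0$ degree at least $\kappa-\Delta(G)+4 \geq 6 > 4$, so I may assume $\kappa = \Delta(G)+1$. Write $N_G(v_0) = \{v,w\}$ and suppose for contradiction that $\deg(v) \leq 3$. The case $\deg(v)=2$ is immediate: applying \autoref{DegreeSum} at $v$ forces the other neighbor of $v$ to have degree at least $\kappa = \Delta(G)+1$, which is impossible. So $\deg(v)=3$, and since $\deg(v)\leq\Delta(G)$ we have $\Delta(G)\geq 3$ and $\kappa\geq 4$. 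If $wv\in E(G)$, then part \ref{A} of \autoref{2+edge} (whose hypothesis $\kappa\geq\deg(v)+1$ holds) would give $\deg(v)\geq\kappa-\deg(w)+3\geq\kappa-\Delta(G)+3=4$, a contradiction; hence $wv\notin E(G)$, and in particular $w\notin\{v_1,v_2\}$.

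Next I would pin down the rigid local structure. Color $G-v_0$ by an acyclic edge coloring $\phi$, writing $N_G(v)=\{v_0,v_1,v_2\}$ with $\phi(vv_1)=1$ and $\phi(vv_2)=2$, so that $C(v)=\{3,\dots,\kappa\}$ (note $|C(v)|=\Delta(G)-1\geq 2$). If $C(w)\cap C(v)\neq\emptyset$ one extends directly, exactly as in the opening moves of \autoref{2+edge}; hence $C(w)\cap C(v)=\emptyset$, which gives $C(w)\subseteq\{1,2\}$, and since $\deg(w)\leq\Delta(G)=\kappa-1$ this forces $\deg(w)=\Delta(G)$ and $C(w)=\{1,2\}$. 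The alternating-path dichotomy then shows that for every $i\in\{1,2\}$ and every $j\in\{3,\dots,\kappa\}$ there is an $(i,j,v,w)$-alternating path; since its first edge must be $vv_i$ and its second edge a $j$-edge at $v_i$, this forces $\{3,\dots,\kappa\}\subseteq\mathcal U(v_i)$, whence $\deg(v_1)=\deg(v_2)=\Delta(G)$ with $C(v_1)=\{2\}$ and $C(v_2)=\{1\}$.

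The decisive step is to swap the two colors at $v$: let $\phi_{1,2}$ agree with $\phi$ except that $\phi_{1,2}(vv_1)=2$ and $\phi_{1,2}(vv_2)=1$. This is proper because $2\in C(v_1)$ and $1\in C(v_2)$, and it is acyclic: any new bichromatic cycle passes through $v$ and hence uses both recolored edges, so it is a $(1,2)$-cycle, but in $\phi_{1,2}$ the vertex $v_1$ has lost its unique $1$-edge and $v_2$ its unique $2$-edge, so no such cycle can close. The crucial observation is that the swap also annihilates all the $(1,j,v,w)$-paths: in $\phi$ each such path starts with $vv_1$, so reading it from $w$ it enters $v$ through the edge $vv_1$ of color $1$, while all its remaining edges are unchanged in $\phi_{1,2}$. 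Tracing the $(1,j)$-maximal path through $w$ in $\phi_{1,2}$ therefore follows the same route up to $v_1$ and then halts there, since $v_1$ no longer has a $1$-edge; thus $\phi_{1,2}$ admits no $(1,j,v,w)$-path for any $j$. I then extend $\phi_{1,2}$ to $G$ by setting $\phi_{1,2}(wv_0)=1$ and $\phi_{1,2}(vv_0)=\kappa$: properness holds since $1\in C(w)$ and $\kappa\in C(v)$, and the only candidate new cycle is a $(1,\kappa)$-cycle through $v_0$, which would require a $(1,\kappa,v,w)$-path that does not exist. This acyclic $\kappa$-coloring of $G$ contradicts $\chiup_a'(G)>\kappa$.

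The main obstacle is the verification in the last paragraph that the single swap simultaneously preserves acyclicity of $G-v_0$ and destroys every relevant alternating path. This rests on the rigidity established in the second paragraph: $v_1$ is the unique entry point into $v$ along any $(1,j)$-path, its only $1$-edge being $vv_1$, so rerouting that one edge breaks all of them at once. Making the path-tracking precise—confirming that the $(1,j)$-maximal path through $w$ genuinely avoids $v_2$ and terminates at $v_1$—is the one point requiring care, and it is exactly where the degree and candidate-color equalities $C(w)=\{1,2\}$, $C(v_1)=\{2\}$, $C(v_2)=\{1\}$ are used.
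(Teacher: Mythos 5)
Your proof is correct and takes essentially the same route as the paper's: after forcing $C(w)=\{1,2\}$, $\deg(w)=\Delta(G)$ and $\Upsilon(vv_{i})=\{3,\dots,\kappa\}$ via the alternating-path dichotomy, the paper likewise exchanges the colors on $vv_{1}$ and $vv_{2}$ and extends by assigning $1$ to $wv_{0}$ and $\kappa$ to $vv_{0}$. Your preliminary detours (reducing to $\kappa=\Delta(G)+1$ via \autoref{2++edge} and excluding $wv\in E(G)$ via \autoref{2+edge}~\ref{A}) are harmless but not needed, and your careful verification that the swap destroys every $(1,j,v,w)$-path is exactly the detail the paper leaves implicit.
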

\begin{proof}%
Let $N_{G}(v_{0}) = \{w, v\}$. By contradiction and \autoref{2+edge}, suppose that $v$ is a $3$-vertex and $N_{G}(v) = \{v_{0}, v_{1}, v_{2}\}$. Since $G$ is $\kappa$-deletion-minimal, it follows that $G - v_{0}$ admits an acyclic edge coloring $\phi$. Without loss of generality, assume that $\phi(vv_{1}) = 1$ and $\phi(vv_{2}) = 2$, and then $C(v) = \{3, 4, \dots, \kappa\}$.

Suppose that $C(w) \cap \{3, \dots, \kappa\} \neq \emptyset$. Choose colors $\lambda_{1} \in C(w) \cap \{3, \dots, \kappa\}$ and $\lambda_{2} \in C(w) \setminus \{\lambda_{1}\}$, and extend $\phi$ by assigning $\lambda_{1}$ to $vv_{0}$ and $\lambda_{2}$ to $wv_{0}$, we obtain an acyclic edge coloring of $G$, a contradiction. So we may assume that $C(w) \cap \{3, \dots, \kappa\} = \emptyset$. Consequently, $C(w) = \{1, 2\}$ and $\deg(w) = \Delta(G)$. Hence, there exists an $(i, j, v, w)$-critical path, where $i \in \{1, 2\}$ and $j \in \{3, \dots, \kappa\}$, otherwise, assigning $i$ to $wv_{0}$ and $j$ to $vv_{0}$ results in an acyclic edge coloring of $G$, a contradiction. Thus, we have $\deg(v_{i}) = \Delta(G)$ and $\Upsilon(vv_{i}) = \{3, \dots, \kappa\}$ for $i \in \{1, 2\}$. Exchanging the colors on $vv_{1}$ and $vv_{2}$ results in a new acyclic edge coloring of $G - v_{0}$. Extending this edge coloring by assigning $1$ to $wv_{0}$ and $\kappa$ to $vv_{0}$, we obtain an acyclic edge coloring of $G$, a contradiction.
\end{proof}
\begin{lemma}\label{Good-3-vertex}%
Let $G$ be a $\kappa$-deletion-minimal graph with $\kappa \geq \Delta(G) + 2$ and $v$ be a $3$-vertex with $N_{G}(v) = \{w, v_{1}, v_{2}\}$. If $\deg(w) = \kappa - \Delta(G) + 2$, then $G$ has the following properties:

\begin{enumerate}[label= (\alph*)]%
\item\label{3a} there is exactly one common color at $w$ and $v$ for any acyclic edge coloring of $G - wv$. By symmetry, we may assume that the color on $vv_{1}$ is the common color;
\item\label{3b} $\deg(v_{1}) = \Delta(G) \geq \deg(v_{2}) \geq \kappa -\Delta(G) + 3$;
\item\label{3c} the edge $wv$ is not contained in any triangle in $G$ and $w$ is adjacent to exactly one $3^{-}$-vertex, say $v$;
\item\label{3d} the vertex $v_{1}$ is adjacent to at least $\kappa - \deg(v_{2}) + 1$ vertices of degree at least $\kappa - \Delta(G) + 2$;
\item\label{3e} the vertex $v_{2}$ is adjacent to at least $\kappa - \Delta(G)$ vertices of degree at least $\kappa - \deg(v_{2}) + 2$;
\item\label{3f} the vertex $v_{2}$ is adjacent to at least $\kappa - \Delta(G) + 1$ vertices of degree at least four.
\end{enumerate}
\end{lemma}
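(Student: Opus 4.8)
The plan is to fix an acyclic edge coloring $\phi$ of $G - wv$, which exists by $\kappa$-deletion-minimality, and to read all six statements off the obstruction that $wv$ itself cannot be legally colored. Write $a = \phi(vv_1)$ and $b = \phi(vv_2)$, so $\mathcal{U}(v) = \{a, b\}$, $|C(v)| = \kappa - 2$, $|\mathcal{U}(w)| = \kappa - \Delta(G) + 1$ and $|C(w)| = \Delta(G) - 1$. I first record that $\deg(w) = \kappa - \Delta(G) + 2 \le \Delta(G)$ together with $\kappa \ge \Delta(G)+2$ forces $\Delta(G) \ge 4$, which I use throughout. For the lower bound in \ref{3a} I apply Fact 2 to $wv$: were $\mathcal{U}(w) \cap \mathcal{U}(v) = \emptyset$, then $\deg(w) + \deg(v) = \kappa + 2$, whereas $\deg(w) + \deg(v) = \kappa - \Delta(G) + 5 < \kappa + 2$ since $\Delta(G) \ge 4$. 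For the upper bound I assume $\{a,b\} \subseteq \mathcal{U}(w)$ and seek a contradiction: every candidate colour $C(w) \cap C(v) = C(w)$ is then blocked, each by a $(\gamma,a)$- or $(\gamma,b)$-dichromatic cycle through $wv$; the idea is to exchange the colours $a,b$ at $v$, or recolour $vv_1, vv_2$ across the unique $a$- and $b$-edges at $w$, so as to destroy all these cycles at once and then colour $wv$. Excluding two common colours is the first delicate point.

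After \ref{3a} take $a$ to be the unique common colour, so $b \in C(w)$ and the candidate set is $C(w) \setminus \{b\}$, of size $\Delta(G) - 2$. Since no candidate is valid and there is no $b$-edge at $w$, each $\gamma \in C(w)\setminus\{b\}$ is blocked by an $(a, \gamma, v, w)$-critical path, which leaves $v$ along $vv_1$; hence $C(w)\setminus\{b\} \subseteq \mathcal{U}(v_1)\setminus\{a\}$ and $\deg(v_1) \ge \Delta(G) - 1$. To reach $\deg(v_1) = \Delta(G)$ in \ref{3b} I would assume $\deg(v_1) = \Delta(G)-1$, note that then $\mathcal{U}(v_1) = \{a\}\cup(C(w)\setminus\{b\})$ exactly (so every proper recolouring of $vv_1$ must use a colour of $\mathcal{U}(w)$), and recolour $vv_1$ to free $a$ at $v$ and obtain a colourable instance; the bound $\deg(v_2)\ge \kappa-\Delta(G)+3$ comes from the symmetric recolouring of $vv_2$ together with \autoref{DegreeSum}. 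For \ref{3c}, I would show that a common neighbour of $w$ and $v$, or a second $3^{-}$-neighbour of $w$, makes one of these recolourings succeed; here the extremality of $\deg(w) = \kappa - \Delta(G)+2$, which makes \autoref{DegreeSum} at $w$ nearly tight, is what forbids a second light neighbour.

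Statements \ref{3d}, \ref{3e} and \ref{3f} I would obtain by transporting the alternating-path counting of \autoref{2+edge} to the edges $vv_1$ and $vv_2$, now that $\deg(v_1) = \Delta(G)$ is known. In an acyclic coloring of $G - vv_1$ the absence of a valid colour forces, for each relevant missing colour, an alternating path between $v$ and a neighbour of $v_1$; the colours of $C(v)$ must reappear at those neighbours, which pins $\kappa - \deg(v_2)+1$ of them to degree at least $\kappa - \Delta(G)+2$, giving \ref{3d}. The symmetric computation on $G - vv_2$ yields the $\kappa - \Delta(G)$ neighbours of $v_2$ of degree at least $\kappa - \deg(v_2)+2$ in \ref{3e}, and \ref{3f} follows by combining \ref{3e} with \autoref{24edge} and \autoref{2++edge}, which already forbid too many light neighbours of $v_2$.

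The main obstacle I anticipate is the upper bound in \ref{3a} and the sharpening $\deg(v_1) = \Delta(G)$ in \ref{3b}: both require neutralizing a whole family of blocking bichromatic cycles by a single exchange or recolouring and then checking that no new dichromatic cycle is created when $wv$ (or the recoloured edge) is assigned its colour. Once these are in place the bookkeeping for \ref{3d}–\ref{3f} is routine given the template of \autoref{2+edge}; the only care is to keep the asymmetric roles of $v_1$ (degree $\Delta(G)$) and $v_2$ straight so that the counts $\kappa - \deg(v_2)+1$, $\kappa - \Delta(G)$ and $\kappa - \Delta(G)+1$ emerge with exactly the stated degree thresholds.
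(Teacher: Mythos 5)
Your skeleton (color $G-wv$, observe that every candidate for $wv$ is blocked by a critical path through the unique common color, then recolor and exchange) is indeed the paper's strategy, but each place where you write ``the idea is\dots'' or ``I would\dots'' is exactly where the content of the lemma lies, and the specific shortcuts you propose do not close those gaps. For \ref{3a} you never actually exclude two common colors; the paper's proof of this needs two cases, the sets $T_{1}, T_{2}$ of blocking colors, and a chain of recolorings ending with an exchange at $w$ after first establishing $\Upsilon(ww_{1})=\Upsilon(ww_{2})=\{n+1,\dots,\kappa\}$ (where $n=\kappa-\Delta(G)+1$); a single exchange of the two colors at $v$ cannot finish, because the candidates simply become re-blocked through the other edge -- that re-blocking is precisely how the paper extracts the structure it later contradicts. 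For \ref{3b} your recoloring argument is circular: if $\mathcal{U}(v_{1})=\{a\}\cup(C(w)\setminus\{b\})$ and you recolor $vv_{1}$ with $c\notin\mathcal{U}(v_{1})\cup\{b\}$ (necessarily $c\in\mathcal{U}(w)$), the result is again an acyclic coloring of $G-wv$ (no $(c,b)$-cycle arises since $b\notin\mathcal{U}(v_{1})$) having exactly one common color, namely $c$, and every candidate $\gamma\in C(w)\setminus\{b\}$ can again be blocked by a $(c,\gamma,v,w)$-critical path through $v_{1}$; the new configuration is isomorphic to the old one, so no ``colourable instance'' is obtained. The paper reaches $\deg(v_{1})=\Delta(G)$ by a genuinely different move: it uncolors $vv_{2}$, moves that color onto $wv$, and applies Fact 2 to the edge $vv_{2}$ of $G-vv_{2}$ to force a $(1,n+1,w,v_{1})$-alternating path. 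Likewise $\deg(v_{2})\geq\kappa-\Delta(G)+3$ cannot come from \autoref{DegreeSum}, which at $v$ gives only $\deg(v_{2})\geq\kappa+3-\deg(w)-\deg(v_{1})=1$; the paper needs three further claims and an exchange at $v_{2}$.

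The remaining shortcuts are also not sound. For \ref{3c}, near-tightness of \autoref{DegreeSum} at $w$ does not forbid a second $3$-neighbor: with $\kappa=\Delta(G)+2$ we have $\deg(w)=4$, and two $3$-neighbors plus two $\Delta(G)$-neighbors give degree sum $2\Delta(G)+6\geq\kappa+\deg(w)=\Delta(G)+6$, which is perfectly consistent; the paper excludes a second $3^{-}$-neighbor by a coloring argument (apply part \ref{3a} to the edge $ww_{2}$ and analyze the resulting common color). For \ref{3f} your derivation collapses: the $\kappa-\Delta(G)$ neighbors supplied by \ref{3e} already have degree at least $\kappa-\deg(v_{2})+2\geq 4$, so \ref{3e} yields only $\kappa-\Delta(G)$ vertices of degree at least four, and neither \autoref{24edge} nor \autoref{2++edge} supplies the extra one -- both lemmas constrain neighbors of $2$-vertices and say nothing against $v_{2}$ having many $3$-neighbors. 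In the paper, \ref{3f} is a separate, substantial case analysis: assume $v_{2}$ has exactly $\kappa-\Delta(G)$ neighbors of degree at least four, note the remaining neighbors are $3$-vertices by \autoref{2+edge}, uncolor $v_{2}z_{1}$, and analyze $\Upsilon(v_{2}z_{1})$ in two cases. In short, you have correctly identified the method and the hard spots, but the six statements do not follow from the outline, and several of the bridges you propose (the recoloring for \ref{3b}, DegreeSum for \ref{3b} and \ref{3c}, the combination for \ref{3f}) are demonstrably insufficient.
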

\begin{proof}%
Let $N_{G}(w) = \{v, w_{1}, \dots, w_{n}\}$, where $n = \kappa - \Delta(G) + 1 \geq 3$. Since $G$ is $\kappa$-deletion-minimal, it follows that $G - wv$ admits an acyclic edge coloring $\phi$ with $\phi(ww_{i}) = i$ for $1 \leq i \leq n$. Since $\deg(w)+ \deg(v) = \deg(w) + 3 \neq \kappa + 2$, Fact 2 guarantees $|\mathcal{U}(w) \cap \mathcal{U}(v)| \geq 1$. Without loss of generality, assume that $\phi(vv_{1}) = 1$.

\begin{claim}%
$|\mathcal{U}(w) \cap \mathcal{U}(v)| = 1$.
\end{claim}
\begin{proof}%
By contradiction and symmetry, we may assume that $\phi(vv_{2}) = 2$. For any $\alpha$ with $n+1 \leq \alpha \leq \kappa$, there exists a $(1, \alpha, v, w)$-critical path or there exists a $(2, \alpha, v, w)$-critical path. Let
\begin{align*}%
T_{1} = \{\alpha_{1} \mid \alpha_{1} \in \{n+1, \dots, \kappa\} \mbox{ and there exists a $(1, \alpha_{1}, v, w)$-critical path}\}, \\
T_{2} = \{\alpha_{2} \mid \alpha_{2} \in \{n+1, \dots, \kappa\} \mbox{ and there exists a $(2, \alpha_{2}, v, w)$-critical path}\}.
\end{align*}
Hence, $T_{1} \cup T_{2} = \{n+1, \dots, \kappa\}$.

\begin{case}%
Either $\Upsilon(vv_{1}) \nsupseteq \{n+1, \dots, \kappa\}$ or $\Upsilon(vv_{2}) \nsupseteq \{n+1, \dots, \kappa\}$.
\end{case}
By symmetry, we may assume that $n+1 \notin \Upsilon(vv_{2})$. It follows that there exists a $(1, n+1, v, w)$-critical path. Reassigning $n+1$ to $vv_{2}$ results in a new acyclic edge coloring $\psi$ of $G - vw$ with $|\mathcal{U}_{\psi}(w) \cap \mathcal{U}_{\psi}(v)| = 1$. Note that no candidate color for $wv$ is valid; in other words, there exists a $(1, \theta, v, w)$-critical path with respect to $\psi$ (the same with respect to $\phi$) for $n+2 \leq \theta \leq \kappa$. Consequently, $\Upsilon(vv_{1}) = \Upsilon(ww_{1}) = \{n+1, \dots, \kappa\}$. Reassigning $3$ to $vv_{1}$ results in another acyclic edge coloring $\phi^{*}$ of $G - vw$. Similarly, we can prove that there exists a $(3, \theta, v, w)$-critical path with respect to $\phi^{*}$ for $n+1 \leq \theta \leq \kappa$, and $\Upsilon(vv_{1}) = \Upsilon(ww_{3}) = \{n+1, \dots, \kappa\}$. Exchanging the colors on $ww_{1}$ and $ww_{3}$, we obtain a new acyclic edge coloring of $G - vw$, but now $n+1$ is valid for $vw$, a contradiction.

\begin{case}%
$\Upsilon(vv_{1}) \supseteq \{n+1, \dots, \kappa\}$ and $\Upsilon(vv_{2}) \supseteq \{n+1, \dots, \kappa\}$.
\end{case}
In fact, $\Upsilon(vv_{1}) = \Upsilon(vv_{2}) = \{n+1, \dots, \kappa\}$. By symmetry, we may assume that $T_{1} \neq \emptyset$. Exchanging the colors on $vv_{1}$ and $vv_{2}$, we obtain a new acyclic edge coloring $\Phi$ of $G - vw$. Note that no candidate color for $wv$ is valid. In other words, if assigning an arbitrary color $\theta_{2}$ in $T_{2}$ to $vw$, then there exists a $(1, \theta_{2})$-dichromatic cycle containing $vw$ with respect to $\Phi$; if assigning an arbitrary color $\theta_{1}$ in $T_{1}$ to $vw$, then there exists a $(2, \theta_{1})$-dichromatic cycle containing $vw$ with respect to $\Phi$. Now, we have $\Upsilon(ww_{1}) = \Upsilon(ww_{2}) = T_{1} \cup T_{2} = \{n+1, \dots, \kappa\}$. Reassigning $3$ to $vv_{1}$ and $1$ to $vv_{2}$, and assigning an arbitrary color $\theta_{1}$ in $T_{1}$ to $vw$, the resulting proper edge coloring has a $(3, \theta_{1})$-dichromatic cycle containing $w_{3}wvv_{1}$. Exchanging the colors on $ww_{1}$ and $ww_{2}$, and reassigning $3$ to $vv_{2}$, we obtain an acyclic edge coloring of $G - vw$. But every color in $T_{1}$ is valid for $vw$ with respect to this acyclic edge coloring of $G - vw$, which derives a contradiction. This completes the proof of Claim~1.
\end{proof}

Without loss of generality, let $\phi(vv_{2}) = n+1$. There exists a $(1, \alpha, v, w)$-critical path for $n+2 \leq \alpha \leq \kappa$, otherwise, the color $\alpha$ is valid for $vw$, a contradiction. Hence, $\mathcal{U}(v_{1}) \supseteq \{1, n+2, \dots, \kappa\}$ and $\mathcal{U}(w_{1}) \supseteq \{1, n+2, \dots, \kappa\}$. Consequently, there exists no $(1, \alpha, v, v_{2})$-critical path for $\alpha \in \{n+2, \dots, \kappa\}$.

\begin{claim}\label{V-W-Path}%
There is a $(1, n+1, w, v_{1})$-alternating path.
\end{claim}
\begin{proof}%
Suppose that there is no $(1, n+1, w, v_{1})$-alternating path. Removing $n+1$ from $vv_{2}$ and reassigning $n+1$ to $wv$, yields an acyclic edge coloring $\sigma$ of $G - vv_{2}$. Fact 2 guarantees $\mathcal{U}_{\sigma}(v) \cap \mathcal{U}_{\sigma}(v_{2}) = \{1\}$. If $\{n+2, \dots, \kappa\} \nsubseteq \Upsilon(vv_{2})$, then every color $\alpha$ in $\{n+2, \dots, \kappa\} \setminus \Upsilon(vv_{2})$ is valid for $vv_{2}$ with respect to $\sigma$ since there is no $(1, \alpha, v, v_{2})$-critical path with respect to $\phi$. It follows that $\{1, n+1, \dots, \kappa\} \subseteq \mathcal{U}(v_{2})$; in fact, we have $\mathcal{U}(v_{2}) = \{1, n+1, \dots, \kappa\}$ and $C(v_{2}) = \{2, \dots, n\}$. Since $|\Upsilon(vv_{1}) \cap \{2, \dots, n\}| \leq 1$, thus we have $\{2, \dots, n\} \setminus \Upsilon(vv_{1}) \neq \emptyset$, but every color in $\{2, \dots, n\} \setminus \Upsilon(vv_{1})$ is valid for $vv_{2}$ with respect to $\sigma$, a contradiction. 
\end{proof}

Now, there exists a $(1, \alpha, w, v_{1})$-alternating path for $n+1 \leq \alpha \leq \kappa$. Hence, $\mathcal{U}(v_{1}) \supseteq \{1, n+1, \dots, \kappa\}$ and $\mathcal{U}(w_{1}) \supseteq \{1, n+1, \dots, \kappa\}$. Furthermore, $\deg(v_{1}) = \deg(w_{1}) =\Delta(G)$ and $\mathcal{U}(v_{1}) = \mathcal{U}(w_{1}) = \{1, n+1, \dots, \kappa\}$. Note that $wv_{1} \notin E(G)$ since $\Upsilon(vw) \cap \Upsilon(vv_{1}) = \emptyset$.

\begin{claim}%
$\{1, 2, \dots, n\} \subseteq \Upsilon(vv_{2})$.
\end{claim}
\begin{proof}%
Suppose, towards a contradiction, that there is a color $\alpha_{2}$ in $\{1, 2, \dots, n\} \setminus \Upsilon(vv_{2})$. Reassigning $\alpha_{2}$ to $vv_{2}$ and reassigning a color $\alpha_{1}$ in $\{1, 2, \dots, n\} \setminus \{\alpha_{2}\}$ to $vv_{1}$, we obtain an acyclic edge coloring $\phi^{*}$ of $G - vw$. But $|\mathcal{U}_{\phi^{*}}(w) \cap \mathcal{U}_{\phi^{*}}(v)| = 2$, which contradicts \autoref{Good-3-vertex}~\ref{3a}.
\end{proof}

Consequently, $\mathcal{U}(v_{2}) \supseteq \{1, 2, \dots, n+1\}$ and $C(v_{2}) \subseteq \{n+2, \dots, \kappa\}$. Without loss of generality, we may assume that $C(v_{2}) = \{n+2, \dots, m\}$. Note that $m \geq 2(\kappa - \Delta(G) + 1)$ since $|C(v_{2})| \geq \kappa - \Delta(G)$.

We may assume that $N_{G}(v_{1}) = \{v, y_{n+1}, y_{n+2}, \dots, y_{\kappa}\}$ with $\phi(v_{1}y_{\lambda}) = \lambda$ for $n+1 \leq \lambda \leq \kappa$, and $N_{G}(v_{2}) = \{v, z_{1}, z_{2}, \dots, z_{n}\} \cup \{z_{m+1}, z_{m+2}, \dots, z_{\kappa}\}$ with $\phi(v_{2}z_{t}) = t$ for $t \in \{1, \dots, n\} \cup \{m+1, \dots, \kappa\}$. 

\begin{claim}\label{thetaPath}%
For $\theta \in \{2, \dots, n\}$ and $\lambda \in \{n+1, \dots, m\}$, there exists a $(\lambda, \theta, v_{1}, v_{2})$-alternating path.
\end{claim}
\begin{proof}%
By symmetry, assume that there exists no $(\lambda, 2, v_{1}, v_{2})$-alternating path. Reassigning $2$ to $vv_{1}$ and $\lambda$ to $vv_{2}$ results in an acyclic edge coloring $\sigma_{1}$ of $G - wv$. By similar arguments as above, we have $\Upsilon(ww_{2}) = \Upsilon(vv_{1}) = \{n+1, \dots, \kappa\}$. Therefore, exchanging the colors on $ww_{1}$ and $ww_{2}$ results in an acyclic edge coloring $\sigma_{2}$ of $G - wv$, but $\kappa$ is valid for $wv$ with respect to this acyclic edge coloring $\sigma_{2}$, which derives a contradiction.
\end{proof}

By \autoref{thetaPath}, we have $\mathcal{U}(y_{\lambda}) \supseteq \{1, \dots, n, \lambda\}$ and $\deg(y_{\lambda}) \geq n+1$ for $\lambda \in \{n+1, \dots, m\}$. Therefore, the vertex $v_{1}$ is adjacent to at least $\kappa - \deg(v_{2}) + 1$ vertices of degree at least $\kappa - \Delta(G) + 2$ and \ref{3d} holds. 

The \autoref{thetaPath} also implies that $\mathcal{U}(z_{t}) \supseteq \{n+1, \dots, m, t\}$ and $\deg(z_{t}) \geq \kappa - \deg(v_{2}) + 2$ for $t \in \{2, \dots, n\}$. Therefore, the vertex $v_{2}$ is adjacent to at least $\kappa - \Delta(G)$ vertices of degree at least $\kappa - \deg(v_{2}) + 2$ and \ref{3e} holds. Since $\Upsilon(ww_{1}) = \{n+1, n+2, \dots, \kappa\}$ and $C(v_{2}) = \{n+2, \dots, m\}$, thus $w_{1} \neq v_{2}$. Since $\Upsilon(v_{2}z_{t}) \supseteq \{n+1, \dots, m\}$ for $t \in \{2, \dots, n\}$ and $\mathcal{U}(w) = \{1, 2, \dots, n\}$, it follows that $w \notin \{z_{2}, \dots, z_{n}\}$, that is, $v_{2} \notin \{w_{2}, \dots, w_{n}\}$. Therefore, the edge $wv$ is not contained in any triangle.

\begin{claim}%
$\deg(v_{2}) \geq \kappa - \Delta(G) + 3$.
\end{claim}
\begin{proof}%
If $\deg(v_{2}) = \kappa - \Delta(G) + 2 = n+1$, then $\mathcal{U}(v_{2}) = \{1, \dots, n+1\}$ and $m = \kappa$. By the above arguments, we have $\Upsilon(v_{2}z_{t}) = \{n+1, \dots, \kappa\}$ for $2 \leq t \leq n$. Exchanging the colors on $v_{2}z_{2}$ and $v_{2}z_{3}$, reassigning $2$ to $vv_{1}$ and $\kappa$ to $wv$, we obtain an acyclic edge coloring of $G$, a contradiction.
\end{proof}

Consequently, $\deg(v_{1}) = \Delta(G) \geq \deg(v_{2}) \geq \kappa - \Delta(G) + 3$ and \ref{3b} holds.

\begin{claim}%
The vertices in $\{w_{2}, w_{3}, \dots, w_{n}\}$ are all $4^{+}$-vertices.
\end{claim}
\begin{proof}
By contradiction and symmetry, suppose that $w_{2}$ is a $3^{-}$-vertex. By \autoref{kappa=2} and \autoref{2++edge}, the vertex $w_{2}$ is a $3$-vertex. Removing the color on $ww_{2}$ and assigning $2$ to $wv$ results in an acyclic edge coloring $\psi$ of $G - ww_{2}$. By \autoref{Good-3-vertex}~\ref{3a}, we have $|\mathcal{U}_{\psi}(w) \cap \mathcal{U}_{\psi}(w_{2})| = 1$. If $\mathcal{U}_{\psi}(w) \cap \mathcal{U}_{\psi}(w_{2}) = \{1\}$, then every color in $\{n+2, \dots, \kappa\} \setminus \Upsilon(ww_{2})$ is valid for $ww_{2}$ with respect to $\psi$, a contradiction. If $\mathcal{U}_{\psi}(w) \cap \mathcal{U}_{\psi}(w_{2}) = \{3\} \subseteq \{3, \dots, n\}$, then we can similarly prove that $\mathcal{U}(w_{3}) = \{3, n+1, \dots, \kappa\}$. Exchanging colors on $ww_{1}$ and $ww_{3}$, we obtain a new acyclic edge coloring of $G - vw$, but $\kappa$ is valid for $vw$ with respect to this coloring, a contradiction. Therefore, the vertices in $\{w_{2}, w_{3}, \dots, w_{n}\}$ are all $4^{+}$-vertices.
\end{proof}

Notice that $w_{1}$ is a vertex with maximum degree, thus $w$ is adjacent to exactly one $3^{-}$-vertex, say $v$.

In what follows, suppose that $v_{2}$ is adjacent to precisely $\kappa - \Delta$ vertices of degree at least four, say $z_{2}, z_{3}, \dots, z_{n}$. By \autoref{2+edge}, the vertices $z_{1}, z_{m+1}, z_{m+2}, \dots, z_{\kappa}$ are all $3$-vertices. Removing the color $1$ from $v_{2}z_{1}$, reassigning $2, 1$ and $n+1$ to $vv_{1}, vv_{2}$ and $vw$ respectively, we obtain an acyclic edge coloring $\pi$ of $G - v_{2}z_{1}$. By Fact 2, we have $\mathcal{U}_{\pi}(v_{2}) \cap \mathcal{U}_{\pi}(z_{1}) \neq \emptyset$.  If $\Upsilon(v_{2}z_{1}) \subseteq \{2, \dots, m\}$, then every color in $\{n+1, \dots, m\} \setminus \Upsilon(v_{2}z_{1})$ is valid for $v_{2}z_{1}$ with respect to $\pi$, a contradiction. So we may assume that $\Upsilon(v_{2}z_{1}) \cap \{m+1, \dots, \kappa\} \neq \emptyset$. 

(1) Suppose that $\Upsilon(v_{2}z_{1}) \subseteq \{m+1, m+2, \dots, \kappa\}$. By symmetry, we may assume that $\Upsilon(v_{2}z_{1}) = \{\kappa-1, \kappa\}$. There exists a $(\kappa-1, \alpha, z_{1}, v_{2})$- or $(\kappa, \alpha, z_{1}, v_{2})$-critical path for $\alpha \in \{n+1, \dots, m\}$. Since $|\{n+1, \dots, m\}| \geq 3$ and $\deg(z_{\kappa}) = \deg(z_{\kappa-1}) = 3$, so we may assume that there exists a $(\kappa, \alpha_{1}, z_{1}, v_{2})$- and a $(\kappa, \alpha_{2}, z_{1}, v_{2})$-critical path, where $\{\alpha_{1}, \alpha_{2}\} \subseteq \{n+1, \dots, m\}$. Thus, we have $\Upsilon(v_{2}z_{\kappa}) = \{\alpha_{1}, \alpha_{2}\}$. Modify $\pi$ by reassigning $\alpha_{3}$ to $v_{2}z_{\kappa}$, where $\alpha_{3} \in \{n+1, \dots, m\} \setminus \{\alpha_{1}, \alpha_{2}\}$, and we obtain a new acyclic edge coloring $\varphi$ of $G - v_{2}z_{1}$. Neither $\alpha_{1}$ nor $\alpha_{2}$ is valid for $v_{2}z_{1}$ with respect to $\varphi$, and it follows that $\Upsilon(v_{2}z_{\kappa-1}) = \{\alpha_{1}, \alpha_{2}\}$. Now, we have $\Upsilon(v_{2}z_{\kappa}) = \Upsilon(v_{2}z_{\kappa-1}) = \{\alpha_{1}, \alpha_{2}\}$, but $\alpha_{3}$ is valid for $v_{2}z_{1}$ with respect to $\pi$, a contradiction. 

(2) Suppose that $\Upsilon(v_{2}z_{1}) = \{s, m+1\}$ with $s \in \{2, \dots, m\}$. Since no candidate color for $v_{2}z_{1}$ is valid with respect to $\pi$, it follows that there exists a $(m+1, \theta, z_{1}, v_{2})$-critical path with respect to $\pi$ for $\theta \in \{n+1, \dots, m\} \setminus \{s\}$, thus $\deg(v_{2}) = \Delta(G) = \kappa - 2$. Furthermore, we have $n = 3$, $m = 6$ and $\Upsilon(v_{2}z_{7}) \cup \{s\} = \{4, 5, 6\}$. If there exists no $(2, 7, v_{2}, v_{1})$-alternating path, then assigning $4, 2, 7$ and $s$ to $vw, vv_{1}, vv_{2}$ and $v_{2}z_{7}$, we obtain an acyclic edge coloring of $G$, a contradiction. Thus, there exists a $(2, 7, v_{2}, v_{1})$-alternating path and $7 \in \Upsilon(v_{2}z_{2})$. Similarly, there exists a $(3, 7, v_{2}, v_{1})$-alternating path and $7 \in \Upsilon(v_{2}z_{3})$. By \autoref{thetaPath}, we have $\Upsilon(v_{2}z_{2}) \cap \Upsilon(v_{2}z_{3}) \supseteq \{4, 5, 6, 7\}$. If $\Delta(G) = 5$, then $\Upsilon(v_{2}z_{2}) = \Upsilon(v_{2}z_{3}) = \{4, 5, 6, 7\}$, thus exchanging the colors on $v_{2}z_{2}$ and $v_{2}z_{3}$, and reassigning $5$ to $vw$ and $2$ to $vv_{1}$, brings us an acyclic edge coloring of $G$, a contradiction. So we may assume that $\Delta(G) \geq 6$ and $\kappa = \Delta(G) + 2 \geq 8$.

If $\Upsilon(v_{2}z_{8}) \subseteq \{1, \dots, 6\}$, then modify $\pi$ by reassigning $8, 7$ and a color in $\Upsilon(v_{2}z_{7})$ to $v_{2}z_{7}, v_{2}z_{8}$ and $v_{1}z_{1}$. We obtain an acyclic edge coloring of $G$, a contradiction. Thus, we have $|\Upsilon(v_{2}z_{8}) \cap \{1, \dots, 6\}| \leq 1$. By symmetry, we may assume that $|\Upsilon(v_{2}z_{t}) \cap \{1, \dots, 6\}| \leq 1$ for $t \in \{8, \dots, \kappa\}$.

Suppose that $\Upsilon(v_{2}z_{p}) = \{7, q\}$ for some $p \geq 8$. We can modify $\pi$ by removing $p$ from $v_{2}z_{p}$ and reassigning $p$ to $v_{2}z_{1}$, and then we obtain an acyclic edge coloring $\pi_{2}$ of $G - v_{2}z_{p}$. Since there exists no $(7, \theta, z_{p}, v_{2})$-critical path with respect to $\pi_{2}$ for $\theta \in \{4, 5, 6\} \setminus \{q\}$, thus there exists a $(q, \theta, z_{p}, v_{2})$-critical path with respect to $\pi_{2}$.  Hence, $q \geq 8$ and $\{4, 5, 6\} \subseteq \Upsilon(v_{2}z_{q})$, which contradicts the fact that $z_{q}$ is a $3$-vertex. Therefore, $7 \notin \Upsilon(v_{2}z_{t})$ for $t \geq 8$. We can modify $\pi$ by exchanging the colors on $v_{2}z_{7}$ and $v_{2}z_{8}$, and reassigning a color in $\Upsilon(v_{2}z_{7})$ to $v_{2}z_{1}$, but this yields an acyclic edge coloring of $G$.
\resetcounter
\end{proof}

\begin{lemma}[Hou \etal \cite{MR2849391}]\label{3+vertex}%
Let $G$ be a $\kappa$-deletion-minimal graph with $\kappa \geq \Delta(G) + 2$. If $v$ is a $3$-vertex, then every neighbor of $v$ is a $(\kappa - \Delta(G) + 2)^{+}$-vertex.
\end{lemma}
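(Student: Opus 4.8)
The plan is to argue by contradiction. Suppose $v$ is a $3$-vertex with $N_G(v)=\{w,v_1,v_2\}$ and $\deg(w)\le\kappa-\Delta(G)+1$. First I dispose of the small degrees: by \autoref{kappa=2} the graph is $2$-connected, so $\deg(w)\ge 2$, and by \autoref{2++edge} every neighbor of a $2$-vertex has degree at least $\kappa-\Delta(G)+4>3$, so $w$ is not a $2$-vertex; hence $\deg(w)\ge 3$. Write $n=\deg(w)-1$, so $2\le n\le\kappa-\Delta(G)$. By minimality $G-vw$ has an acyclic edge coloring $\phi$; normalize so that $\mathcal{U}(w)=\{1,\dots,n\}$ with $\phi(ww_i)=i$, and put $a=\phi(vv_1)$, $b=\phi(vv_2)$, so $\mathcal{U}(v)=\{a,b\}$. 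Since $\deg(w)+\deg(v)\le\kappa-\Delta(G)+4<\kappa+2$, Fact~2 forbids $\mathcal{U}(w)\cap\mathcal{U}(v)=\emptyset$, so $s:=|\mathcal{U}(w)\cap\mathcal{U}(v)|\in\{1,2\}$.

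The engine of the proof is a forcing statement read off from Fact~2: every candidate color $\alpha\in C(v)\cap C(w)=\{n+1,\dots,\kappa\}\setminus\{a,b\}$ is invalid for $vw$, so assigning $\alpha$ to $vw$ closes a bichromatic cycle. Tracing that cycle, its second color $\beta$ must lie in $\mathcal{U}(v)\cap\mathcal{U}(w)$, and removing $vw$ leaves an $(\alpha,\beta)$-alternating path from $v$ through the $\beta$-neighbor $v_i$ to the $\beta$-neighbor $w_\beta$; in particular $\alpha\in\mathcal{U}(v_i)$ and $\alpha\in\mathcal{U}(w_\beta)$. There are $\kappa-n-2+s$ candidates, and each is absorbed into the color set of $v_1$ or $v_2$ according to whether $\beta=a$ or $\beta=b$; combining this with $\deg(v_i)\le\Delta(G)$ pins down the extremal data.

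If $s=1$, say $a\in\{1,\dots,n\}$ and $b\in\{n+1,\dots,\kappa\}$, then every candidate routes through $v_1$, forcing $\deg(v_1)=\Delta(G)$, $n=\kappa-\Delta(G)$ (so $\deg(w)=\kappa-\Delta(G)+1$), $\mathcal{U}(v_1)=\{a\}\cup(\{n+1,\dots,\kappa\}\setminus\{b\})$, and symmetrically $\deg(w_a)=\Delta(G)$. I then finish with two recolorings. Because the only high color available at $w_a$ is $b$, I may recolor $ww_a$ from $a$ to $b$; this is proper and creates no bichromatic cycle, since afterwards $w$ and $w_a$ share no second color. The single common color becomes $b$, and the same forcing now gives $\mathcal{U}(v_2)=\{n+1,\dots,\kappa\}$. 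Next, using a color $c'\in\{1,\dots,n\}\setminus\{a\}$ (available since $n\ge2$) I recolor $vv_2:=c'$ and then $vv_1:=b$; each step remains acyclic because the companion color that could close a cycle is absent from the relevant neighbor's color set. In the resulting coloring $\mathcal{U}(v_1)=\mathcal{U}(v_2)=\{n+1,\dots,\kappa\}$ contains no color $\le n$, so neither an $(a,b)$- nor an $(a,c')$-cycle can form; hence $a$ is valid for $vw$, and extending yields an acyclic coloring of $G$, a contradiction.

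For $s=2$ (both $a,b\le n$) the plan is to reduce to the previous case. If some high color $e$ is missing at $v_1$ and $b\notin\mathcal{U}(v_1)$, then recoloring $vv_1:=e$ is acyclic and leaves $b$ as the unique common color, returning to the $s=1$ analysis; the symmetric move works at $v_2$. The one residual configuration, in which $v_1$ and $v_2$ are both $\Delta(G)$-vertices saturating all high colors while carrying the cross-colors $b\in\mathcal{U}(v_1)$ and $a\in\mathcal{U}(v_2)$, is handled by a direct exchange on $vv_1,vv_2$ steered by the alternating paths produced in the forcing step. I expect this tight regime to be the main obstacle: exactly when Fact~2 holds with equality the coloring is so rigid that freeing $vw$ requires chaining several recolorings and certifying at each step that no new bichromatic cycle is introduced, and controlling the degenerate overlaps (for instance $w_a\in\{v_1,v_2\}$, or $v_1$ adjacent to $w$) is the delicate bookkeeping that the argument must absorb.
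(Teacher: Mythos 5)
First, a point of reference: this paper never proves \autoref{3+vertex} itself --- it is quoted from Hou, Roussel and Wu \cite{MR2849391} --- so your attempt can only be judged on its own merits, not against an internal argument. Your Case $s=1$ is in fact correct and complete: the forcing pins down $n=\kappa-\Delta(G)$, $\mathcal{U}(v_1)=\mathcal{U}(w_a)=\{a\}\cup\bigl(\{n+1,\dots,\kappa\}\setminus\{b\}\bigr)$, the recoloring of $ww_a$ to $b$ is proper and acyclic, the second forcing gives $\mathcal{U}(v_2)=\{n+1,\dots,\kappa\}$, and the final two recolorings make $a$ valid for $vw$; the degenerate identifications you flag (e.g.\ $w_a\in\{v_1,v_2\}$) are excluded automatically by properness and by $b\notin\mathcal{U}(w_a)$, so they cost nothing. (The parenthetical claim that $\mathcal{U}(v_2)=\{n+1,\dots,\kappa\}$ after the last recoloring is slightly off --- it is $\{c'\}\cup(\{n+1,\dots,\kappa\}\setminus\{b\})$ --- but what matters is only that $a$ appears at neither $v_1$ nor $v_2$, which holds.)

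The genuine gap is in Case $s=2$. Since $|\Upsilon(vv_i)|\leq\Delta(G)-1<\kappa-n$, a high color is \emph{always} missing at $v_1$ and at $v_2$; hence your reduction move fails exactly when $b\in\mathcal{U}(v_1)$ and $a\in\mathcal{U}(v_2)$, and that --- with no saturation whatsoever --- is the true residual configuration. The configuration you describe (``$v_1$ and $v_2$ both $\Delta(G)$-vertices saturating all high colors while carrying the cross-colors'') cannot occur at all: it would force $\deg(v_1)\geq(\kappa-n)+2\geq\Delta(G)+2$. Worse, in the true residual configuration the proposed ``direct exchange on $vv_1,vv_2$'' is not even a proper coloring, precisely because $b$ already sits on another edge at $v_1$ and $a$ on another edge at $v_2$. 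So the hardest case of the lemma --- the tight regime where the published proof, like the analogous Case~2 arguments inside \autoref{N_3_N} and \autoref{L9} of this paper, must chain several recolorings steered by the alternating-path structure --- is left unproven. The raw material you do extract there is right (for every high $e\notin\Upsilon(vv_1)$ there is a $(b,e,v_1,v_2)$-alternating path, hence $e\in\Upsilon(vv_2)$, and symmetrically), but no finishing argument is given, and none of the moves you name supplies one.
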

A $3$-vertex is a {\em special} $3$-vertex if it is adjacent to a $(\kappa - \Delta(G) + 2)$-vertex, otherwise, it is called a {\em normal} $3$-vertex. In other words, a vertex is a normal $3$-vertex if it is a $3$-vertex and every neighbor of $v$ is a $(\kappa - \Delta(G) + 3)^{+}$-vertex by \autoref{3+vertex}.

\begin{lemma}\label{N_3_N}%
If $G$ is a $\kappa$-deletion-minimal graph with $\kappa \geq \Delta(G) + 2$ and $w$ is a $(\kappa - \Delta(G) + 3)$-vertex, then $w$ is adjacent to at most $\kappa - \Delta(G) + 1$ vertices of degree three.
\end{lemma}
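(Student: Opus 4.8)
The plan is to argue by contradiction. Set $d=\deg(w)=\kappa-\Delta(G)+3$ and suppose $w$ is adjacent to at least $\kappa-\Delta(G)+2=d-1$ vertices of degree three; since $\deg(w)=d$, this means at most one neighbour of $w$ is a $4^{+}$-vertex. Fix a $3$-vertex neighbour $u_{1}$ of $w$, delete the edge $wu_{1}$, and use minimality to colour $G-wu_{1}$ by an acyclic edge coloring $\phi$. Normalize $\phi$ so that $\phi(wu_{i})=i$ for the other neighbours $u_{2},\dots,u_{d}$ of $w$, whence $C(w)=\{1\}\cup\{d+1,\dots,\kappa\}$ has size $\Delta(G)-2$, and write $\mathcal{U}(u_{1})=\{a,b\}$ for the two colours on the edges of $u_{1}$ different from $wu_{1}$.

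First I would determine $s=|\mathcal{U}(w)\cap\mathcal{U}(u_{1})|$ from Fact 2. If $s=0$ then $\deg(w)+\deg(u_{1})=\kappa+2$, which forces $\Delta(G)=4$; this degenerate case I would dispose of separately, noting that for $\Delta(G)>4$ the set $C(w)\setminus\{a,b\}$ is nonempty and every colour in it is valid for $wu_{1}$, giving an immediate extension. So I may assume $a\in\mathcal{U}(w)$. The decisive point is that $u_{1}$ is a $3$-vertex: if assigning a candidate colour $c$ to $wu_{1}$ creates a dichromatic cycle, then at $u_{1}$ that cycle must leave along an edge whose colour lies in $\mathcal{U}(u_{1})=\{a,b\}$, so only the (at most two) colours $a,b$ can play the role of the second colour of a blocking critical path. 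Consequently every candidate colour $c$ is blocked by a $(c,a,u_{1},w)$- or $(c,b,u_{1},w)$-critical path.

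Next I would convert this into a degree constraint. A candidate $c$ blocked through $a$ must appear at the neighbour $u_{a}$ carrying colour $a$ at $w$, since the blocking cycle uses the edge $wu_{a}$ and then an edge of colour $c$ incident with $u_{a}$; likewise for $b$ and $u_{b}$. As the roughly $\Delta(G)-2$ candidate colours are distributed over the at most two vertices $u_{a},u_{b}$, at least one of them collects many colours and hence has large degree. When $s=2$, Fact 2 sharpens this to $\deg(u_{a})+\deg(u_{b})\ge\Delta(G)$; combined with the hypothesis that $w$ has at most one $4^{+}$-neighbour, this pins the configuration down to a single high-degree neighbour of $w$ absorbing essentially all candidate colours, while the remaining neighbours of $w$ are $3$-vertices. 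The contradiction is then produced by recolouring: exchanging colours among the edges $wu_{2},\dots,wu_{d}$, most of which run to $3$-vertices and therefore carry very few critical paths, so as to free a colour that becomes valid for $wu_{1}$ and then extends to $G$.

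The hard part will be this final recolouring step. After swapping colours on the edges at $w$ one must destroy all dichromatic cycles through $wu_{1}$ simultaneously without creating new ones, and this requires a careful bookkeeping of the $(\alpha,\beta)$-critical and alternating paths anchored at the $3$-vertex neighbours and at the single possible $4^{+}$-neighbour, in the spirit of the exchange arguments in \autoref{2++edge} and \autoref{Good-3-vertex}. I also expect the small cases $\Delta(G)\in\{4,5,6\}$, where the degree counts above are too coarse to separate $u_{a}$ from $u_{b}$, to require separate direct treatment.
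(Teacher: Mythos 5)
Your setup coincides with the paper's: delete the edge from $w$ to a $3$-vertex neighbour, invoke Fact 2 to get at least one common colour, observe that every candidate colour for the deleted edge must be blocked by a critical path through one of the at most two colours seen at the $3$-vertex, and conclude that these blocked colours all accumulate on the one or two neighbours $u_a,u_b$ of $w$ carrying those colours, so that one of them has large degree while the remaining neighbours of $w$ are $3$-vertices. Two small remarks on this part: the case $s=0$ that you defer is actually vacuous, because $\deg(w)=\kappa-\Delta(G)+3\leq\Delta(G)$ together with $\kappa\geq\Delta(G)+2$ forces $\Delta(G)\geq 5$, so $\deg(w)+3\neq\kappa+2$; and in the paper's convention the blocking paths are $(a,c,u_1,w)$-critical (the common colour is the one that starts and ends the path), not $(c,a,u_1,w)$-critical.

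The genuine gap is that you stop exactly where the proof begins. ``The contradiction is then produced by recolouring \dots\ so as to free a colour that becomes valid for $wu_1$'' is a statement of the task, not an argument, and the paper's proof shows that this step is not routine bookkeeping. Concretely, three things are missing. First, your crude count gives only $\deg(u_a)\geq\Delta(G)-2$, which is at least $4$ only when $\Delta(G)\geq 6$; to cover $\Delta(G)=5$, and to pin down the colour sets needed later, the paper proves a dedicated claim that a $(1,\tau,w,v_1)$-alternating path exists, whose proof is itself a long recolouring argument with a tight subcase $\kappa=\Delta(G)+2$, $\tau=5$. Second, the paper's contradiction in the case $s=1$ is \emph{not} obtained by permuting colours on the edges at $w$: one must first determine exactly which colours appear at the $3$-vertex neighbours $w_2,\dots,w_{\tau-1}$, and this is done by deleting a second edge $ww_2$, invoking minimality again to colour $G-ww_2$, and deducing $|\mathcal{U}(w_i)\cap\{1,\dots,\tau-1\}|=3$ for each such neighbour; only with this information does an exchange (together with a further critical-path analysis) produce a colouring that extends to $G$. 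Third, your case $s=2$ is handled in the paper by additional recolouring arguments whose whole purpose is to reduce it to the case $s=1$, and this reduction again has to be proved, not assumed. Since you explicitly defer all of this (``the hard part will be this final recolouring step''), what you have is a correct plan whose decisive steps — the ones occupying essentially the entire proof in the paper — are absent.
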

\begin{proof}%
Let $N_{G}(w) = \{w_{0}, w_{1}, \dots, w_{\tau-1}\}$, where $\tau = \kappa - \Delta(G) + 3$. To derive a contradiction, assume that $w$ is adjacent to at least $\tau-1$ vertices of degree three and $w_{0}$ is a $3$-vertex with $N_{G}(w_{0}) = \{w, v_{1}, v_{2}\}$. Since $G$ is $\kappa$-deletion-minimal, it follows that $G - ww_{0}$ admits an acyclic edge coloring $\phi$ with $\phi(ww_{i}) = i$ for $1 \leq i \leq \tau-1$. Since $\deg(w)+ \deg(w_{0}) = \deg(w) + 3 \neq \kappa + 2$, Fact 2 guarantees $|\mathcal{U}(w) \cap \mathcal{U}(w_{0})| \geq 1$. Without loss of generality, assume that $\phi(w_{0}v_{1}) = 1$.

\begin{case}%
$|\mathcal{U}(w) \cap \mathcal{U}(w_{0})| = 1$.
\end{case}

By symmetry, we assume that $\phi(w_{0}v_{2}) = \tau$. There exists a $(1, \alpha, w_{0}, w)$-critical path for $\alpha \in \{\tau+1, \dots, \kappa\}$, thus $\{\tau+1, \dots, \kappa\} \subseteq \Upsilon(ww_{1}) \cap \Upsilon(w_{0}v_{1})$. Consequently, there exists no $(1, \alpha, w_{0}, v_{2})$-critical path for $\alpha \in \{\tau+1, \dots, \kappa\}$.

\begin{claim}\label{v-w-path}%
There exists a $(1, \tau, w, v_{1})$-alternating path.
\end{claim}
\begin{proof}%

Suppose that there exists no $(1, \tau, w, v_{1})$-alternating path. Removing $\tau$ from $w_{0}v_{2}$ and reassigning $\tau$ to $ww_{0}$, yields an acyclic edge coloring $\phi^{*}$ of $G - w_{0}v_{2}$. Fact 2 guarantees $\mathcal{U}_{\phi^{*}}(w_{0}) \cap \mathcal{U}_{\phi^{*}}(v_{2}) = \{1\}$. If $\{\tau+1, \dots, \kappa\} \nsubseteq \Upsilon(w_{0}v_{2})$, then every color $\alpha$ in $\{\tau+1, \dots, \kappa\} \setminus \Upsilon(w_{0}v_{2})$ is valid for $w_{0}v_{2}$ with respect to $\phi^{*}$ since there is no $(1, \alpha, w_{0}, v_{2})$-critical path with respect to $\phi$. It follows that $\mathcal{U}(v_{2}) \supseteq \{1, \tau, \dots, \kappa\}$ and then $C(v_{2}) \subseteq \{2, \dots, \tau-1\}$. If there exists $\beta \in C(v_{1}) \cap C(v_{2})$, then $\beta$ is valid for $w_{0}v_{2}$ with respect to $\phi^{*}$, which is a contradiction. Thus, we have $C(v_{2}) \subseteq \mathcal{U}(v_{1})$ and $C(v_{1}) \subseteq \mathcal{U}(v_{2})$. Note that $C(v_{2}) \subseteq \{2, \dots, \tau-1\}$ and $|C(v_{2})| \geq \kappa - \Delta(G)$. It follows that $\deg(v_{1}) = |\{\tau+1, \dots, \kappa\}| + |\mathcal{U}(v_{1}) \cap \{1, \dots, \tau\}| \geq \Delta(G) - 3 + (1 + |C(v_{2})|) \geq \kappa - 2 \geq \Delta(G)$. Hence, $\deg(v_{1}) = \deg(v_{2})= \Delta(G)$ and $\kappa = \Delta(G) + 2$, and then $\tau = 5$ and $|C(v_{2})| = 2$. Without loss of generality, assume that $C(v_{2}) = \{2, 3\}$, and then $\mathcal{U}(v_{1}) = \{1, 2, 3, 6, \dots, \kappa\}$ and $\mathcal{U}(v_{2}) = \{1, 4, 5, 6, \dots, \kappa\}$. Reassigning $5$ to $w_{0}v_{1}$ and $2$ to $w_{0}v_{2}$ results in an acyclic edge coloring $\phi_{2}$ of $G - ww_{0}$, thus there exists a $(2, \alpha, w_{0}, w)$-critical path with respect to $\phi_{2}$ for $6 \leq \alpha \leq \kappa$, and then $\Upsilon(w_{0}w_{2}) \supseteq \{6, \dots, \kappa\}$. Similarly, reassigning $5$ to $w_{0}v_{1}$ and $3$ to $w_{0}v_{2}$ results in an acyclic edge coloring $\phi_{3}$ of $G - ww_{0}$, thus there exists a $(3, \alpha, w_{0}, w)$-critical path with respect to $\phi_{3}$ for $6 \leq \alpha \leq \kappa$, and then $\Upsilon(w_{0}w_{3}) \supseteq \{6, \dots, \kappa\}$. Reassigning $4$ to $w_{0}v_{1}$ results in an acyclic edge coloring $\phi_{4}$ of $G - ww_{0}$, thus there exists a $(4, \alpha, w_{0}, w)$-critical path with respect to $\phi_{4}$ for $6 \leq \alpha \leq \kappa$, and then $\Upsilon(w_{0}w_{4}) \supseteq \{6, \dots, \kappa\}$. That is, $\Upsilon(ww_{i}) \supseteq \{6, \dots, \kappa\}$ for $1 \leq i \leq 4$. Recall that $w$ is adjacent to at least four $3$-vertices, including $w_{i}$ and $w_{j}$ ($1 \leq i < j \leq 4$). Hence, $\kappa = 7$ and $\Upsilon(ww_{i}) = \Upsilon(ww_{j}) = \{6, 7\}$. Modify $\phi_{j}$ by exchanging the colors on $ww_{i}$ and $ww_{j}$, and assigning $6$ to $ww_{0}$, we obtain an acyclic edge coloring of $G$, a contradiction.
\end{proof}

Hence, $\mathcal{U}(v_{1}) \cap \mathcal{U}(w_{1}) \supseteq \{1, \tau, \dots, \kappa\}$. The degree of $w_{1}$ is at least four since $|\{1, \tau, \dots, \kappa\}| \geq \Delta(G) - 1 \geq 4$. It follows that $w_{2}, \dots, w_{\tau-1}$ are all $3$-vertices.

Note that $C(w_{1}) \subseteq \{2, \dots, \tau-1\}$. Without loss of generality, assume that $\{2, 3, \dots, \tau-2\} \subseteq C(w_{1}) \subseteq \{2, 3, \dots, \tau-1\}$. Removing the color on $ww_{2}$ and assigning $2$ to $ww_{0}$ results in an acyclic edge coloring $\psi$ of $G - ww_{2}$. Suppose that $\mathcal{U}_{\psi}(w_{2}) \cap \mathcal{U}_{\psi}(w) = \{\lambda\}$. By the above arguments, we have that $w_{\lambda}$ is a $4^{+}$-vertex, and then $\lambda = 1$. We can extend $\psi$ by assigning a color in $\{\tau, \dots, \kappa\} \setminus \mathcal{U}_{\psi}(w_{2})$ to obtain an acyclic edge coloring of $G$, which is a contradiction. Hence, $|\mathcal{U}_{\psi}(w_{2}) \cap \mathcal{U}_{\psi}(w)| = 2$ and $|\mathcal{U}(w_{2}) \cap \{1, \dots, \tau-1\}| = 3$. Similarly, we can prove that $|\mathcal{U}(w_{i}) \cap \{1, \dots, \tau-1\}| = 3$ for $2 \leq i \leq \tau-2$.

The candidate color $\kappa$ for $ww_{2}$ is not valid with respect to $\psi$, thus $\tau-1 \in \Upsilon(ww_{2})$ and there exists a $(\tau-1, \kappa, w, w_{2})$-critical path with respect to $\psi$. Removing the color $3$ on $ww_{3}$ and assigning $3$ and $\kappa$ to $ww_{0}$ and $ww_{3}$ results in an acyclic edge coloring of $G$, which is a contradiction.

\begin{case}%
$|\mathcal{U}(w) \cap \mathcal{U}(w_{0})| = 2$.
\end{case}

By symmetry, we may assume that $\phi(w_{0}v_{2}) = 2$. There exists a $(1, \alpha, w_{0}, w)$-critical path or $(2, \alpha, w_{0}, w)$-critical path for $\alpha \in \{\tau, \dots, \kappa\}$, thus $\{\tau, \dots, \kappa\} \subseteq \Upsilon(ww_{1}) \cup \Upsilon(ww_{2})$.

\begin{subcase}%
Either $\Upsilon(w_{0}v_{1}) \nsupseteq \{\tau, \dots, \kappa\}$ or $\Upsilon(w_{0}v_{2}) \nsupseteq \{\tau, \dots, \kappa\}$.
\end{subcase}
By symmetry, assume that $\tau \notin \Upsilon(w_{0}v_{2})$. Note that $\tau$ is not valid for $ww_{0}$, it follows that there exists a $(1, \tau, w_{0}, w)$-critical path, and then there exists no $(1, \tau, w_{0}, v_{2})$-critical path. Reassigning $\tau$ to $w_{0}v_{2}$ results in a new acyclic edge coloring $\sigma$ of $G - ww_{0}$ with $|\mathcal{U}_{\sigma}(w) \cap \mathcal{U}_{\sigma}(w_{0})| = 1$ and it takes us back to Case 1.

\begin{subcase}%
$\Upsilon(w_{0}v_{1}) \supseteq \{\tau, \dots, \kappa\}$ and $\Upsilon(w_{0}v_{2}) \supseteq \{\tau, \dots, \kappa\}$.
\end{subcase}

Suppose that $w_{1}$ and $w_{2}$ are all $3$-vertices. Since $|\{\tau, \dots, \kappa\}| \geq 3$, we may assume that $\Upsilon(ww_{2}) \subseteq \{\tau, \dots, \kappa\}$. But reassigning a color in $\{\tau, \dots, \kappa\} \setminus \Upsilon(ww_{2})$ to $ww_{2}$ results in a new acyclic edge coloring  of $G - ww_{0}$ and it takes us back to Case 1.

So we may assume that $w_{1}$ is a $4^{+}$-vertex. Note that $C(v_{1}) \subseteq \{2, 3, \dots, \tau-1\}$ and $C(v_{2}) \subseteq \{1, 3, 4, \dots, \tau-1\}$, it follows that  $C(v_{1}) \cap \{3, \dots, \tau-1\} \neq \emptyset$, say $3 \in C(v_{1})$. Reassigning $3$ to $w_{0}v_{1}$ creates a $(3, 2)$-dichromatic cycle containing $w_{0}v_{1}$, otherwise, by the above arguments, one of $w_{2}$ and $w_{3}$ must be a $4^{+}$-vertex, a contradiction. Hence, $2 \in \Upsilon(w_{0}v_{1})$ and $3 \in \Upsilon(w_{0}v_{2})$. Hence, $C(v_{1}) = \{3, 4, \dots, \tau-1\}$ and $C(v_{2}) = \{1, 4, 5, \dots, \tau-1\}$. Reassigning $4$ to $w_{0}v_{1}$ results in another acyclic edge coloring of $G - ww_{0}$. We can similarly prove that one of $w_{2}$ and $w_{4}$ is a $4^{+}$-vertex, a contradiction.
\resetcounter
\end{proof}

\begin{lemma}\label{L9}%
Let $G$ be a $\kappa$-deletion-minimal graph with $\kappa \geq \Delta(G) + 2$, and let $w_{0}$ be a $3$-vertex with $N_{G}(w_{0}) = \{w, w_{1}, w_{2}\}$, and $\deg(w) = \kappa - \Delta(G) + 3$. If $ww_{1}, ww_{2} \in E(G)$, then $\deg(w_{1}) = \deg(w_{2}) = \Delta(G)$ and $w$ is adjacent to precisely one vertex (namely $w_{0}$) of degree less than $\Delta(G) - 1$.
\end{lemma}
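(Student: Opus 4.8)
The plan is to work with an acyclic edge coloring $\phi$ of $G - ww_{0}$, which exists by $\kappa$-deletion-minimality, and to exploit that the two triangles $ww_{0}w_{1}$ and $ww_{0}w_{2}$ share the edge $ww_{0}$. Write $\tau = \kappa - \Delta(G) + 3 = \deg(w)$; since $\kappa \geq \Delta(G) + 2$ and $\tau \leq \Delta(G)$ we have $\tau \geq 5$ and $\Delta(G) \geq 5$, so the $3$-vertex $w_{0}$ already has degree less than $\Delta(G) - 1$. By \autoref{3+vertex} each of $w_{1}, w_{2}$ is a $(\kappa - \Delta(G) + 2)^{+}$-vertex. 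I would label the edges at $w$ so that $\phi(ww_{i}) = i$, whence $C(w) = \{\tau, \dots, \kappa\}$ has exactly $\Delta(G) - 2$ colors, and set $a = \phi(w_{0}w_{1})$, $b = \phi(w_{0}w_{2})$, so $\mathcal{U}(w_{0}) = \{a, b\}$ with $a \neq \phi(ww_{1})$ and $b \neq \phi(ww_{2})$.

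First I would pin down $\mathcal{U}(w) \cap \mathcal{U}(w_{0})$ using Fact~2 and translate the non-validity of candidate colors for $ww_{0}$ into path information: for every candidate color $\alpha$, assigning $\alpha$ to $ww_{0}$ produces an $(\alpha, \gamma)$-dichromatic cycle whose second color $\gamma$ lies in $\mathcal{U}(w) \cap \mathcal{U}(w_{0}) \subseteq \{a, b\}$. Because the $a$-edge (resp.\ $b$-edge) at $w_{0}$ is $w_{0}w_{1}$ (resp.\ $w_{0}w_{2}$), such a cycle restricts to a $(\gamma, \alpha, w_{0}, w)$-critical path through $w_{1}$ when $\gamma = a$ and through $w_{2}$ when $\gamma = b$, forcing $\alpha \in \mathcal{U}(w_{1})$ or $\alpha \in \mathcal{U}(w_{2})$, respectively. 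Thus the $\Delta(G) - 2$ colors of $C(w)$ are distributed between $\mathcal{U}(w_{1})$ and $\mathcal{U}(w_{2})$; together with $a, \phi(ww_{1}) \in \mathcal{U}(w_{1})$ and $b, \phi(ww_{2}) \in \mathcal{U}(w_{2})$ this already gives $\deg(w_{1}) + \deg(w_{2}) \geq \Delta(G) + 2$.

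To upgrade this to the exact equalities $\deg(w_{1}) = \deg(w_{2}) = \Delta(G)$, I would argue by contradiction. If, say, $\deg(w_{1}) \leq \Delta(G) - 1$, then $|C(w_{1})| = \kappa - \deg(w_{1}) \geq \kappa - \Delta(G) + 1 \geq 3$. Recoloring the triangle edge $w_{0}w_{1}$ with any color $a' \in C(w_{1}) \setminus \{b\}$ is automatically proper and acyclic: a bichromatic cycle through $w_{0}w_{1}$ would have to alternate into an $a'$-edge at $w_{1}$, impossible since $a' \in C(w_{1})$. I thereby obtain a family of acyclic colorings of $G - ww_{0}$ indexed by the $\geq 2$ choices of $a'$; in each, the obstruction for $ww_{0}$ now involves only critical paths in the two colors $\{a', b\}$, and comparing these obstructions across the different $a'$ (there are enough of them precisely because $w_{1}$ is not saturated) forces a valid color for $ww_{0}$, contradicting minimality. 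By the $w_{1} \leftrightarrow w_{2}$ symmetry this yields $\deg(w_{1}) = \deg(w_{2}) = \Delta(G)$.

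Finally, for the claim that $w_{0}$ is the \emph{only} neighbor of $w$ with degree less than $\Delta(G) - 1$, I would suppose some other neighbor $w_{i}$ of $w$ has $\deg(w_{i}) \leq \Delta(G) - 2$, so $|C(w_{i})| \geq \kappa - \Delta(G) + 2 \geq 4$. With $w_{1}, w_{2}$ now saturated at degree $\Delta(G)$, the critical-path bookkeeping of the second paragraph leaves $C(w)$ with no slack, while the extra freedom at $w_{i}$ permits swapping $\phi(ww_{i})$ with a free color at $w_{i}$ and recoloring one of $w_{0}w_{1}, w_{0}w_{2}$ so as to free a valid color for $ww_{0}$, again a contradiction; the extremal case $\kappa = \Delta(G) + 2$, where $C(w)$ is smallest, should be checked separately. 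I expect the main obstacle to be exactly these exact-equality and ``precisely one'' upgrades: \autoref{DegreeSum} only gives the sum bound $\deg(w_{1}) + \deg(w_{2}) \geq \Delta(G) + 2$, so equality and the uniqueness must be extracted from recoloring, and the bulk of the work is organizing the subcases according to whether $a, b \in \mathcal{U}(w)$ and to which of $w_{1}, w_{2}$ the critical paths attach, verifying in each that the modified coloring stays proper and kills every dichromatic cycle.
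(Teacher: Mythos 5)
Your opening (color $G-ww_0$, apply Fact~2, and convert the non-validity of each candidate color for $ww_0$ into a critical path that must leave $w_0$ through $w_0w_1$ or $w_0w_2$) matches the opening of the paper's proof, and that first paragraph of yours is essentially correct. But everything after it has genuine gaps, and they sit exactly where the content of the lemma lies. Your one concrete mechanism for upgrading to $\deg(w_1)=\Delta(G)$ --- ``recoloring $w_0w_1$ with any $a'\in C(w_1)\setminus\{b\}$ is automatically proper and acyclic'' --- is false as justified. A dichromatic cycle through the recolored edge $w_0w_1$ is an $(a',b)$-cycle of the form $w_1 \to w_0 \to w_2 \to \cdots \to w_1$; at $w_1$ it needs only a $b$-colored edge (the $a'$-edge it uses at $w_1$ is $w_0w_1$ itself), and the $a'$-edges it needs are at $w_2$ and at interior vertices. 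So $a'\in C(w_1)$ does not exclude such a cycle, and the ``family of acyclic colorings indexed by $a'$'' on which your contradiction rests need not exist. In the paper, recolorings of $w_0w_1$ and $w_0w_2$ are performed only after the sets $\Upsilon(w_0w_1)$ and $\Upsilon(w_0w_2)$ have been pinned down explicitly enough to certify that no such cycle can arise.

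Second, the two hard conclusions --- the exact equalities $\deg(w_1)=\deg(w_2)=\Delta(G)$ and the claim that every other neighbor $w_i$ of $w$ has degree at least $\Delta(G)-1$ --- are asserted rather than proved: ``comparing these obstructions \dots forces a valid color'' and ``the extra freedom at $w_i$ permits swapping \dots'' are statements of intent, not arguments. The paper's proof shows what this actually requires: it splits on $|\mathcal{U}(w)\cap\mathcal{U}(w_0)|\in\{1,2\}$; in each case it forces $\mathcal{U}(w_1)$ and $\mathcal{U}(w_2)$ to contain explicit sets of $\Delta(G)$ colors (the equalities come from this saturation count, not from a contradiction with $\deg(w_1)<\Delta(G)$, and one subcase is shown to reduce to the other or to be outright impossible); the analysis in Case~1 is even driven to the boundary $n=4$, i.e.\ $\kappa=\Delta(G)+2$; and the ``precisely one'' claim is obtained by constructing further colorings ($\sigma_1,\sigma_2$, respectively $\Phi_1,\Phi_2$ with a sliding color $\lambda_0$ on $w_0w_1$) and tracking where the resulting critical paths must terminate, which forces $\{n+2,\dots,\kappa\}$, respectively $T_1\cup T_2$, into $\Upsilon(ww_i)$ for every $i\ge 3$. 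None of this machinery, nor a workable substitute, appears in your sketch. A further small sign that the case analysis cannot be skipped: your sum bound $\deg(w_1)+\deg(w_2)\ge\Delta(G)+2$ only follows when both colors $a,b$ lie in $\mathcal{U}(w)$; when exactly one of them is common, your argument yields only $\Delta(G)+1$, since then all candidate colors are forced into the single set $\mathcal{U}(w_2)$.
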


\begin{proof}
Let $N_{G}(w) = \{w_{0}, w_{1}, \dots, w_{n}\}$, where $n = \kappa - \Delta(G) + 2 \geq 4$. Since $G$ is a $\kappa$-deletion-minimal graph, it follows that $G - ww_{0}$ admits an acyclic edge coloring $\phi$ with $\phi(ww_{i}) = i$ for $1 \leq i \leq n$. Since $\deg(w)+ \deg(w_{0}) =\deg(w) + 3 \neq \kappa + 2$, Fact 2 guarantees $|\mathcal{U}(w) \cap \mathcal{U}(w_{0})| \geq 1$. By symmetry, we may assume that the color $\lambda$ on $w_{0}w_{2}$ is a common color.

\begin{case}\label{L91}%
$|\mathcal{U}(w) \cap \mathcal{U}(w_{0})| = 1$.
\end{case}
Without loss of generality, we may assume that $\phi(w_{0}w_{1}) = n+1$. There exists a $(\lambda, \alpha, w_{0}, w)$-critical path for $\alpha \in \{n+2, \dots, \kappa\}$, thus $\{n+2, \dots, \kappa\} \subseteq \Upsilon(w_{0}w_{2}) \cap \Upsilon(ww_{\lambda})$. Note that $|\{n+2, \dots, \kappa\}| = \Delta(G) - 3$.
\begin{subcase}\label{L911}%
The color on $ww_{1}$ is the common color.
\end{subcase}
It follows that $\lambda = 1$ and $\{1\} \cup \{n+2, \dots, \kappa\} \subseteq \Upsilon(w_{0}w_{1})$. Removing $n+1$ from $w_{0}w_{1}$ and assigning $n+1$ to $ww_{0}$, we obtain an acyclic edge coloring $\psi$ of $G - w_{0}w_{1}$. If $\{3, 4, \dots, n\} \setminus(\Upsilon(w_{0}w_{1}) \cup \Upsilon(w_{0}w_{2})) \neq \emptyset$, then every color in $\{3, 4, \dots, n\} \setminus(\Upsilon(w_{0}w_{1}) \cup \Upsilon(w_{0}w_{2}))$ is valid for $w_{0}w_{1}$ with respect to $\psi$, a contradiction. Thus, we have $\{3, 4, \dots, n\} \setminus(\Upsilon(w_{0}w_{1}) \cup \Upsilon(w_{0}w_{2})) = \emptyset$. Notice that $|\{3, 4, \dots, n\} \cap \Upsilon(w_{0}w_{1})| \leq 1$ and $|\{3, 4, \dots, n\} \cap \Upsilon(w_{0}w_{2})| \leq 1$, thus $n = 4$, $|\{3, 4\} \cap \Upsilon(w_{0}w_{1})| = 1$ and $|\{3, 4\} \cap \Upsilon(w_{0}w_{2})| = 1$. We may assume that $\mathcal{U}(w_{1}) = \{1, \dots, \kappa\} \setminus \{2, 3\}$ and $\mathcal{U}(w_{2}) = \{1, \dots, \kappa\} \setminus \{4, 5\}$. Clearly, $\deg(w_{1}) = \deg(w_{2}) = \Delta(G)$. 

Reassigning $4$ to $w_{0}w_{2}$ yields a new acyclic edge coloring $\sigma_{1}$ of $G - ww_{0}$, thus $\{6, \dots, \kappa\} \subseteq \Upsilon(ww_{4})$. Reassigning $3$ to $w_{0}w_{1}$ and $5$ to $w_{0}w_{2}$ yields another acyclic edge coloring $\sigma_{2}$ of $G - ww_{0}$, thus $\{6, \dots, \kappa\} \subseteq \Upsilon(ww_{3})$. If $2 \notin \Upsilon(ww_{4})$, then reassigning $5, 2$ and $4$ to $w_{0}w, w_{0}w_{1}$ and $w_{0}w_{2}$ yields an acyclic edge coloring of $G$. Thus, we have $\{2\} \cup \{6, 7, \dots, \kappa\} \subseteq \Upsilon(ww_{4})$ and $\deg(w_{4}) \geq \Delta(G) - 1$. If $1 \notin \Upsilon(ww_{3})$, then reassigning $5$ to $w_{0}w$ and $3$ to $w_{0}w_{1}$ yields an acyclic edge coloring of $G$. Thus, we have $\{1\} \cup \{6, 7, \dots, \kappa\} \subseteq \Upsilon(ww_{3})$ and $\deg(w_{3}) \geq \Delta(G) - 1$.
\begin{subcase}\label{L912}%
The color on $ww_{1}$ is not a common color.
\end{subcase}

If $1 \notin \Upsilon(w_{0}w_{2})$, then reassigning $1$ to $w_{0}w_{2}$ will take us back to Subcase~\ref{L911}. So we may assume that $1 \in \Upsilon(w_{0}w_{2})$. Hence, $\deg(w_{2}) = \Delta(G)$ and $\Upsilon(w_{0}w_{2}) = \{n+2, \dots, \kappa\} \cup \{1, 2\}$. Removing $n+1$ from $w_{0}w_{1}$ and assigning $n+1$ to $ww_{0}$, we obtain an acyclic edge coloring $\varphi$ of $G - w_{0}w_{1}$. If $\lambda \notin \Upsilon(w_{0}w_{1})$, then every color in $C(w_{1}) \setminus \{\lambda\}$ is valid for $w_{0}w_{1}$ with respect to $\varphi$, a contradiction. If $\lambda \in \Upsilon(w_{0}w_{1})$, then every color in $C(w_{1}) \setminus \{2\}$ is valid for $w_{0}w_{1}$ with respect to $\varphi$, a contradiction.

\begin{case}%
$|\mathcal{U}(w) \cap \mathcal{U}(w_{0})| = 2$.
\end{case}
Suppose that the color on $w_{0}w_{1}$ is $\lambda^{*}$. There exists a $(\lambda^{*}, \alpha, w_{0}, w)$-critical path or $(\lambda, \alpha, w_{0}, w)$-critical path for $n+1 \leq \alpha \leq \kappa$. Let
\begin{align*}%
T_{1} = \{\alpha_{1} \mid \alpha_{1} \in \{n+1, \dots, \kappa\} \mbox{ and there exists a $(\lambda^{*}, \alpha_{1}, v, w)$-critical path}\},\\
T_{2} = \{\alpha_{2} \mid \alpha_{2} \in \{n+1, \dots, \kappa\} \mbox{ and there exists a $(\lambda, \alpha_{2}, v, w)$-critical path}\}.
\end{align*}
Hence, $T_{1} \cup T_{2} = \{n+1, \dots, \kappa\}$.

Suppose that either $\Upsilon(w_{0}w_{1}) \nsupseteq \{n+1, \dots, \kappa\}$ or $\Upsilon(w_{0}w_{2}) \nsupseteq \{n+1, \dots, \kappa\}$. By symmetry, assume that $n+1 \notin \Upsilon(w_{0}w_{1})$. By the assumption, it follows that there exists a $(\lambda, n+1, w_{0}, w)$-critical path. Reassigning $n+1$ to $w_{0}w_{1}$ results in a new acyclic edge coloring $\varrho$ of $G - ww_{0}$ with $|\mathcal{U}_{\varrho}(w) \cap \mathcal{U}_{\varrho}(w_{0})| = 1$, and this takes us back to Case~\ref{L91}.

So we may assume that $\Upsilon(w_{0}w_{1}) \supseteq \{n+1, \dots, \kappa\}$ and $\Upsilon(w_{0}w_{2}) \supseteq \{n+1, \dots, \kappa\}$. In fact, $\Upsilon(w_{0}w_{1}) = \{n+1, \dots, \kappa\} \cup \{1\}$, $\Upsilon(w_{0}w_{2}) = \{n+1, \dots, \kappa\} \cup \{2\}$ and $\deg(w_{1}) = \deg(w_{2}) = \Delta(G)$.

Since $\{3, 4\} \cap (\Upsilon(w_{0}w_{1}) \cup \Upsilon(w_{0}w_{2})) = \emptyset$, so we may assume that $\lambda^{*} = 3$ and $\lambda = 4$. Reassigning $3$ to $w_{0}w_{2}$ and reassigning an arbitrary color $\lambda_{0}$ in $\{4, \dots, n\}$ to $w_{0}w_{1}$, we obtain a new acyclic edge coloring $\Phi_{1}$ of $G - ww_{0}$. Note that no candidate color for $ww_{0}$ is valid. In other words, if assigning an arbitrary color $\alpha_{1}$ in $T_{1}$ to $ww_{0}$, then there exists a $(\lambda_{0}, \alpha_{1})$-dichromatic cycle containing $ww_{0}$, thus $T_{1} \subseteq \Upsilon(ww_{i})$ for $3 \leq i \leq n$. Similarly, reassigning $4$ to $w_{0}w_{1}$ and reassigning an arbitrary color $\lambda_{1}$ in $\{3\} \cup \{5, \dots, n\}$ to $w_{0}w_{2}$, we obtain another acyclic edge coloring $\Phi_{2}$ of $G - ww_{0}$, and we can prove that $T_{2} \subseteq \Upsilon(ww_{i})$ for $3 \leq i \leq n$. Now, we have $\Upsilon(ww_{i}) \supseteq T_{1} \cup T_{2} = \{n+1, \dots, \kappa\}$ for $3 \leq i \leq n$, thus $\deg(w_{i}) \geq \Delta(G) -1$ for $3 \leq i \leq n$.
\resetcounter
\end{proof}

\begin{lemma}\label{NO44t}%
Let $G$ be a $\kappa$-deletion-minimal graph with $\kappa \geq \Delta(G) + 3$. If a $\tau$-vertex $w$ is contained in a $(4, 4, \tau)$-triangle, then $w$ is adjacent to at most $(\tau - 3)$ vertices of degree three.
\end{lemma}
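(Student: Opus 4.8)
The plan is to argue by contradiction, so suppose $w$ lies in a $(4,4,\tau)$-triangle $wab$ with $\deg(a)=\deg(b)=4$ and that $w$ is adjacent to at least $\tau-2$ vertices of degree three. Since $\deg(a)=\deg(b)=4$, every degree-three neighbour of $w$ must be one of the remaining $\tau-2$ neighbours; as there are exactly $\tau-2$ of these, all of them, say $v_1,\dots,v_{\tau-2}$, are $3$-vertices, and $N_G(w)=\{a,b,v_1,\dots,v_{\tau-2}\}$. Because each $v_i$ is a $3$-vertex with $w$ as a neighbour, \autoref{3+vertex} gives $\tau=\deg(w)\ge\kappa-\Delta(G)+2$, and since $\kappa\ge\Delta(G)+3$ this forces $\tau\ge 5$; in particular $w$ has at least three degree-three neighbours. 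First I would dispose of the tight case $\tau=\kappa-\Delta(G)+2$: here each $v_i$ is a $3$-vertex whose neighbour $w$ has degree exactly $\kappa-\Delta(G)+2$, so \autoref{Good-3-vertex}~\ref{3c} applies to $v_1$ and tells us that $w$ is adjacent to exactly one $3^{-}$-vertex, contradicting the presence of both $v_1$ and $v_2$.

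It remains to treat $\tau\ge\kappa-\Delta(G)+3$. Here I would delete the triangle edge $ab$ and take an acyclic edge coloring $\phi$ of $G-ab$, which exists by $\kappa$-deletion-minimality. Since $\deg(a)+\deg(b)=8$ while $\kappa+2\ge 9$ (as $\Delta(G)\ge 4$ forces $\kappa\ge 7$), Fact~2 guarantees $\mathcal{U}(a)\cap\mathcal{U}(b)\ne\emptyset$; write $s=|\mathcal{U}(a)\cap\mathcal{U}(b)|\in\{1,2,3\}$. There are $\kappa-(6-s)\ge 2$ candidate colors for $ab$, and none of them can be valid, so each candidate $\gamma$ must be blocked by a $(\gamma,\delta)$-critical path joining $a$ and $b$ for some common color $\delta\in\mathcal{U}(a)\cap\mathcal{U}(b)$. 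The refined inequality $\deg(a)+\deg(b)+\sum_{x\in W(ab)}\deg(x)\ge\kappa+2s+2$, combined with the fact that every such path must leave $a$ through one of its $s$ common-color edges and enter $b$ through one of its $s$ common-color edges, pins down tightly where these paths anchor, namely at $w$ and at the few neighbours $a_1,a_2$ of $a$ and $b_1,b_2$ of $b$. The triangle is essential at this point precisely because it makes $a$ and $b$ neighbours of $w$, so the two blocked triangle edges $wa,wb$ sit at the very vertex where we have recoloring freedom.

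The heart of the argument is then to convert this rigidity into an explicit acyclic edge coloring of the whole graph $G$, which contradicts $\chiup_{a}'(G)>\kappa$. The key point is that we do not colour $ab$ with the surrounding coloring fixed; instead I would simultaneously reassign colors to $ab$ and to the triangle edges $wa,wb$, cascading the repair onto the edges $wv_i$ whenever a recoloring of $wa$ or $wb$ would re-create a bichromatic cycle. Each $v_i$ is a $3$-vertex, so $wv_i$ can be recoloured within a large palette and can be forced to lie off any fixed pair of colors; the free-color set $C(w)$ has size $\kappa-\tau+1$, and the low degrees of $a$ and $b$ sharply limit how the critical paths can re-form after an exchange. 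I expect the main obstacle to be exactly this last step: organising the case analysis on $s\in\{1,2,3\}$ and on which of $w,a_1,a_2,b_1,b_2$ anchor the critical paths, and verifying in each case that the simultaneous reassignment introduces no new dichromatic cycle. The abundance of degree-three neighbours, at least $\tau-2$ of them, is what supplies enough recoloring freedom to push this rotation through and thereby produce the contradictory coloring of $G$.
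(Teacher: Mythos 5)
Your setup is fine: reducing to the case where all $\tau-2$ remaining neighbours of $w$ are $3$-vertices matches the paper, and your disposal of the tight case $\tau=\kappa-\Delta(G)+2$ via \autoref{3+vertex} and \autoref{Good-3-vertex}~\ref{3c} is correct (the paper's proof does not even need to isolate this case). The genuine gap is the main case $\tau\geq\kappa-\Delta(G)+3$: what you offer there is a plan, not a proof, and you say so yourself (``I expect the main obstacle to be exactly this last step''). The deferred step --- organising the case analysis and verifying that some simultaneous recoloring of $ab$, $wa$, $wb$ and the edges $wv_{i}$ creates no dichromatic cycle --- is the entire content of the lemma; nothing in the proposal carries it out, so the proposal does not establish the statement.

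Beyond being incomplete, the chosen starting point looks unlikely to succeed. After deleting $ab$, Fact~2 gives only weak information: in the extremal case $\kappa=\Delta(G)+3$ (which any proof must handle) the inequality $\deg(a)+\deg(b)+\sum_{x\in W(ab)}\deg(x)\geq\kappa+2s+2$ is already consistent with $s=1$, so there is no real rigidity to exploit. Worse, the $s$ common colours of $a$ and $b$ may all lie on the edges $aa_{1},aa_{2},bb_{1},bb_{2}$, in which case every critical path blocking a candidate colour for $ab$ avoids $w$ entirely; then the degree-$3$ neighbours $v_{1},\dots,v_{\tau-2}$ of $w$ --- the very hypothesis you must contradict --- impose no constraint on the coloring of $G-ab$, and recoloring $wa$, $wb$, $wv_{i}$ gives no leverage for making a candidate colour valid for $ab$. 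This is precisely why the paper deletes $ww_{0}$, an edge joining $w$ to one of its $3$-neighbours, instead of the edge $ab=w_{1}w_{2}$: Fact~2 then forces a common colour between $w$ and $w_{0}$, every candidate colour for $ww_{0}$ must be blocked by a critical path that ends at $w$ along a common-coloured edge $ww_{i}$, and the sets $\Upsilon(ww_{i})$ are small by hypothesis (size at most $2$ at a $3$-neighbour, at most $3$ at the $4$-vertices $w_{1},w_{2}$) while at least $\kappa-\tau\geq 3$ colours must be packed into them; the triangle edge $w_{1}w_{2}$ enters only as a recoloring tool in the residual subcases. To salvage your route you would first have to force one of the colours on $wa$ or $wb$ to be a common colour of $a$ and $b$, and there is no apparent way to do that.
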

\begin{proof}
Let $w$ be a $\tau$-vertex with neighborhood $\{w_{0}, w_{1}, \dots, w_{\tau-1}\}$. Suppose that $ww_{1}w_{2}$ is a triangle with $\deg(w_{1}) = \deg(w_{2}) = 4$ and $w_{0}, w_{3}, \dots, w_{\tau-1}$ are $3$-vertices. The graph $G - ww_{0}$ admits an acyclic edge coloring $\phi$ with $\phi(ww_{i}) = i$ for $1 \leq i \leq \tau-1$. Since $\deg(w) + \deg(w_{0}) = \deg(w) + 3 \neq \kappa + 2$, Fact 2 guarantees $|\mathcal{U}(w) \cap \mathcal{U}(w_{0})| \geq 1$.

\begin{case}\label{LC1}%
$|\mathcal{U}(w) \cap \mathcal{U}(w_{0})| = 1$.
\end{case}

\begin{subcase}%
By symmetry, assume that $\Upsilon(ww_{0}) = \{3, \tau\}$.
\end{subcase}
It follows that there exists a $(3, \alpha, w, w_{0})$-critical path for $\alpha \in \{\tau+1, \tau+2, \dots, \kappa\}$, thus $\{\tau+1, \tau+2, \dots, \kappa\} \subseteq \Upsilon(ww_{3})$ and $\deg(w_{3}) \geq 1 + |\{\tau+1, \tau+2, \dots, \kappa\}| \geq 4$, which contradicts the fact that $\deg(w_{3}) = 3$.

\begin{subcase}%
By symmetry, assume that $\Upsilon(ww_{0}) = \{1, \tau\}$.
\end{subcase}
It follows that there exists a $(1, \alpha, w, w_{0})$-critical path for $\alpha \in \{\tau+1, \tau+2, \dots, \kappa\}$, thus $\{\tau+1, \tau+2, \dots, \kappa\} \subseteq \Upsilon(ww_{1})$. Moreover, $\kappa = \tau+3 = \Delta(G) + 3$ and $1 \in \Upsilon(ww_{2})$. By symmetry, we may assume that the color on $w_{1}w_{2}$ is $\tau+1$. Reassigning $\tau$ to $ww_{1}$ results in a new acyclic edge coloring $\psi$ of $G - ww_{0}$. There exists a $(\tau, \tau+1, w, w_{0})$-critical path with respect to $\psi$, thus we have $\tau \in \Upsilon(ww_{2})$. Notice that $\Upsilon(ww_{2}) = \{1, \tau, \tau+1\}$ and $\Upsilon(ww_{1}) = \{\tau+1, \tau+2, \tau+3\}$. Reassigning $\tau+2$ to $ww_{2}$ and $2$ to $ww_{0}$ results in an acyclic edge coloring of $G$.
\begin{case}%
$\Upsilon(ww_{0}) \subseteq \{1, 2, \dots, \tau-1\}$.
\end{case}
\begin{subcase}\label{LC21}%
$\Upsilon(ww_{0}) \subseteq \{3, 4, \dots, \tau-1\}$
\end{subcase}
By symmetry, assume that $\Upsilon(ww_{0}) = \{3, 4\}$. There exists a $(3, \alpha, w, w_{0})$-critical path or $(4, \alpha, w, w_{0})$-critical path for $\alpha \in \{\tau, \tau+1, \dots, \kappa\}$. It follows that $\{\tau, \tau+1, \dots, \kappa\} \subseteq \Upsilon(ww_{3}) \cup \Upsilon(ww_{4})$. Note that $|\{\tau, \tau+1, \dots, \kappa\}| \geq 4$ and $|\Upsilon(ww_{3}) \cup \Upsilon(ww_{4})| \leq 4$, it follows that $\kappa = \tau + 3 = \Delta(G) + 3$ and $\Upsilon(ww_{3}) \cap \Upsilon(ww_{4}) = \emptyset$. By symmetry, assume that $\Upsilon(ww_{3}) = \{\tau, \tau+1\}$ and $\Upsilon(ww_{4}) = \{\tau+2, \tau+3\}$. But reassigning $\tau+1$ to $ww_{4}$ will take us back to Case~\ref{LC1}.
\begin{subcase}\label{LC22}%
$\Upsilon(ww_{0}) = \{1, 2\}$.
\end{subcase}
It follows that there exists a $(1, \alpha, w, w_{0})$-critical path or $(2, \alpha, w, w_{0})$-critical path for $\alpha \in \{\tau, \tau+1, \dots, \kappa\}$, thus $\{\tau, \tau+1, \dots, \kappa\} \subseteq \Upsilon(ww_{1}) \cup \Upsilon(ww_{2})$. Let $\alpha_{0}$ be the color on $w_{1}w_{2}$. If there exists a $(2, \alpha_{0}, w, w_{0})$-critical path passing through $w_{1}w_{2}$, then $2 \in \Upsilon(ww_{1})$ and $\Upsilon(ww_{1}) \cup \Upsilon(ww_{2}) = \{\tau, \tau+1, \tau+2, \tau+3\} \cup \{2\}$, but exchanging the colors on $w_{1}w_{2}$ and $w_{1}w$ will take us back to Case~\ref{LC1}. Similarly, if there exists a $(1, \alpha_{0}, w, w_{0})$-critical path passing through $w_{1}w_{2}$, we can also reduce the proof to Case~\ref{LC1}. Now, we conclude that $\{\tau, \tau+1, \dots, \kappa\} \subseteq (\Upsilon(ww_{1}) \cup \Upsilon(ww_{2})) \setminus \{\alpha_{0}\}$, thus $|\{\tau, \tau+1, \dots, \kappa\}| = 4$ and $\Upsilon(ww_{1}) \cap \Upsilon(ww_{2}) = \{\alpha_{0}\}$. By symmetry, assume that $\Upsilon(ww_{1}) = \{3, \tau, \tau+1\}$ and $\Upsilon(ww_{2}) = \{3, \tau+2, \tau+3\}$. If there exists no $(3, \tau, w_{2}, w)$-critical path, then reassigning $\tau$ to $ww_{2}$ will take us back to Case~\ref{LC1}. Thus, there exists a $(3, \tau, w_{2}, w)$-critical path and $\tau \in \Upsilon(ww_{3})$. Similarly, there exists a $(3, \tau+1, w_{2}, w)$-critical path and $\tau+1 \in \Upsilon(ww_{3})$. Thus, we have $\Upsilon(ww_{3}) = \{\tau, \tau+1\}$. But we can assign $\tau+2$ to $ww_{1}$ and go back to Case~\ref{LC1}.

\begin{subcase}%
$|\Upsilon(ww_{0}) \cap \{1, 2\}| = 1$ and $|\Upsilon(ww_{0}) \cap \{3, 4, \dots, \tau-1\}| = 1$. 
\end{subcase}
By symmetry, assume that $\Upsilon(ww_{0}) = \{1, 3\}$. It follows that $\{\tau, \tau+1, \dots, \kappa\} \subseteq \Upsilon(ww_{1}) \cup \Upsilon(ww_{3})$. Since $|\{\tau, \tau+1, \dots, \kappa\}| \geq 4$ and $|\Upsilon(ww_{1})| = 3$, it follows that $\{\tau, \tau+1, \dots, \kappa\} \setminus \Upsilon(ww_{1}) \neq \emptyset$. By symmetry, we may assume that $\tau \notin \Upsilon(ww_{1})$. Thus there exists a $(3, \tau, w, w_{0})$-critical path and $\tau \in \Upsilon(ww_{3})$. If $\Upsilon(ww_{1}) \subseteq \{\tau, \tau+1, \dots, \kappa\}$, then reassigning $\tau$ to $ww_{1}$ will take us back to Case~\ref{LC1}. So we may assume that $\Upsilon(ww_{1}) \nsubseteq \{\tau, \tau+1, \dots, \kappa\}$ and $\{\tau, \tau+1\} \cap \Upsilon(ww_{1}) = \emptyset$. There exists a $(3, \tau+1, w, w_{0})$-critical path and $\Upsilon(ww_{3}) = \{\tau, \tau+1\}$. But now, reassigning $\tau+2$ to $ww_{3}$ will take us back to Case~\ref{LC1} again.
\resetcounter
\end{proof}

\section{Applications}\label{MResults}

\begin{theorem}[Basavaraju and Chandran \cite{MR2527638}]\label{Non4Regular}%
If $G$ is a connected graph with $\Delta(G) \leq 4$ and $G$ is not $4$-regular, then $\chiup_{a}'(G) \leq \Delta(G) + 2$.
\end{theorem}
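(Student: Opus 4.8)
The plan is to argue by contradiction through a minimal counterexample, letting the structural lemmas do essentially all of the work. Since AECC is already known for $\Delta(G) \le 3$ (the subcubic case, where in fact $\chiup_{a}'(G) \le 5$), I may assume $\Delta(G) = 4$ and aim for $\chiup_{a}'(G) \le 6$. Suppose to the contrary that $\chiup_{a}'(G) > 6$, and let $G'$ be a subgraph of $G$ with $\chiup_{a}'(G') > 6$ chosen with as few edges as possible. By minimality every proper subgraph $H$ of $G'$ satisfies $\chiup_{a}'(H) \le 6$, and $\Delta(G') \le \Delta(G) = 4 \le 6$; hence $G'$ is $6$-deletion-minimal with $\kappa = 6$. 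If $\Delta(G') \le 3$, then $G'$ would be subcubic and $\chiup_{a}'(G') \le 5$, a contradiction; therefore $\Delta(G') = 4$ and $\kappa = \Delta(G') + 2$, which is exactly the regime covered by the structural lemmas.

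Next I would pin down the degree sequence of $G'$ with $\kappa = 6$ and $\Delta(G') = 4$. By \autoref{kappa=2} the graph $G'$ is $2$-connected, so $\delta(G') \ge 2$. If $G'$ had a $2$-vertex $v_{0}$, then by \autoref{2++edge} every neighbor of $v_{0}$ would have degree at least $\kappa - \Delta(G') + 4 = 6$, impossible since $\Delta(G') = 4$; thus $\delta(G') \ge 3$. Suppose now that $G'$ had a $3$-vertex $v$. By \autoref{3+vertex} every neighbor of $v$ is a $(\kappa - \Delta(G') + 2)^{+}$-vertex, that is, a $4$-vertex; choosing any such neighbor $w$ we have $\deg(w) = 4 = \kappa - \Delta(G') + 2$, so the hypotheses of \autoref{Good-3-vertex} are met exactly, and its conclusion (b) yields a neighbor $v_{2}$ of $v$ with $\deg(v_{2}) \ge \kappa - \Delta(G') + 3 = 5 > \Delta(G')$, which is absurd. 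Hence $G'$ has no $3$-vertex, so $\delta(G') \ge 4$; combined with $\Delta(G') = 4$ this forces every vertex of $G'$ to have degree exactly $4$, i.e.\ $G'$ is $4$-regular.

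Finally I would convert the $4$-regularity of the minimal object back into a contradiction with the hypothesis that $G$ is connected and not $4$-regular. Fix any $v \in V(G')$; since $4 = \deg_{G'}(v) \le \deg_{G}(v) \le \Delta(G) = 4$, all four $G$-edges at $v$ already lie in $G'$, so every $G$-neighbor of $v$ lies in $V(G')$. Thus $V(G')$ is nonempty and closed under taking neighbors in $G$, whence the connectivity of $G$ gives $V(G') = V(G)$ and then $E(G') = E(G)$, i.e.\ $G' = G$; but then $G$ is $4$-regular, contradicting the hypothesis. This contradiction establishes $\chiup_{a}'(G) \le 6 = \Delta(G) + 2$. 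I expect the main obstacle not to lie in any new coloring argument — the discharging-type difficulty is entirely front-loaded into \autoref{Good-3-vertex} — but rather in two bookkeeping points that must be handled carefully: first, verifying that the minimal subgraph $G'$ is genuinely $6$-deletion-minimal with $\Delta(G') = 4$ (and not a degenerate subcubic object), and second, the connectivity passage that upgrades ``$G'$ is $4$-regular'' into a contradiction with $G$ being connected and non-$4$-regular.
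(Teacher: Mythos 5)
Your proof is correct and follows essentially the same route as the paper: reduce to a $\kappa$-deletion-minimal graph with $\kappa = \Delta + 2$, then use \autoref{kappa=2}, \autoref{2++edge}, \autoref{3+vertex} and \autoref{Good-3-vertex}~\ref{3b} to rule out $2$- and $3$-vertices, forcing $4$-regularity and a contradiction. Your explicit closure argument showing $G' = G$ is just the spelled-out version of what the paper compresses into the observation that no component of a proper subgraph of a connected non-$4$-regular graph with $\Delta \leq 4$ can be $4$-regular, so the two proofs differ only in bookkeeping.
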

\begin{proof}%
Let $G$ be a counterexample with fewest edges and fix $\kappa = \Delta(G) + 2$. For every proper subgraph $H$ of $G$, every component of $H$ is connected with maximum degree at most four and is not $4$-regular, thus every component of $H$ (hence $H$) has an acyclic edge coloring with at most $\kappa$ colors, which implies that $G$ is a $\kappa$-deletion-minimal graph. By \autoref{kappa=2}, the graph $G$ is $2$-connected and $\delta(G) \geq 2$. By \autoref{2++edge}, \autoref{Good-3-vertex} and \autoref{3+vertex}, the graph $G$ contains neither $2$-vertices nor $3$-vertices, thus $G$ is $4$-regular.
\end{proof}

\begin{theorem}[Basavaraju and Chandran \cite{MR2466974}]%
If $G$ is a connected subcubic graph which is not $3$-regular, then $\chiup_{a}'(G) \leq \Delta(G) + 1$.
\end{theorem}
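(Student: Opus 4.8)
The plan is to mimic the minimal--counterexample argument used for \autoref{Non4Regular}, but with the tighter budget $\kappa = \Delta(G) + 1$. Let $G$ be a connected subcubic non-$3$-regular graph that violates the conclusion and has the fewest edges among all such graphs, and set $\kappa = \Delta(G) + 1$. If $\Delta(G) \le 2$ then, since $G$ is connected, $G$ is a path or a cycle, and in each case one checks directly that $\chiup_{a}'(G) \le \Delta(G) + 1$; so I may assume $\Delta(G) = 3$ and hence $\kappa = 4$.

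Next I would verify that $G$ is $\kappa$-deletion-minimal. Let $H$ be any proper subgraph and let $C$ be a component of $H$; then $C$ is connected and subcubic with strictly fewer edges than $G$. The key observation is that $C$ cannot be $3$-regular: if it were, then every vertex of $C$ would already have degree $\Delta(G) = 3$ in $G$, so all edges of $G$ incident to $V(C)$ lie in $C$, forcing $C$ to be a union of components of $G$; since $G$ is connected this gives $C = G$, contradicting that $H$ is proper. Thus each component $C$ is a connected subcubic non-$3$-regular graph with fewer edges than $G$, so minimality yields $\chiup_{a}'(C) \le \Delta(C) + 1 \le \kappa$, and taking the maximum over components gives $\chiup_{a}'(H) \le \kappa$. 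Hence $G$ is $\kappa$-deletion-minimal.

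Finally I would extract the structural contradiction. By \autoref{kappa=2} the graph $G$ is $2$-connected, so $\delta(G) \ge 2$. Because $\kappa = \Delta(G) + 1$, \autoref{24edge} applies: every neighbor of a $2$-vertex must have degree at least four. But $\Delta(G) = 3$, so $G$ can contain no $2$-vertex, and therefore $\delta(G) \ge 3$. Combined with $\Delta(G) = 3$ this makes $G$ exactly $3$-regular, contradicting the hypothesis, and no counterexample exists. The main point to watch is that, unlike the proof of \autoref{Non4Regular}, the sharper budget $\kappa = \Delta(G)+1$ puts us outside the range $\kappa \ge \Delta(G)+2$ required by the $3$-vertex lemmas, so the entire weight of the argument must rest on \autoref{24edge} alone; fortunately we never need to eliminate $3$-vertices, since once $\delta(G) \ge 3$ the graph is automatically regular.
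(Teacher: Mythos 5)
Your proposal is correct and follows essentially the same route as the paper: take a minimal counterexample, verify $\kappa$-deletion-minimality by observing that no component of a proper subgraph can be $3$-regular, and then use \autoref{kappa=2} together with \autoref{24edge} to exclude $2^{-}$-vertices, forcing $G$ to be $3$-regular. Your explicit justification of the non-$3$-regularity of components and the separate treatment of $\Delta(G)\leq 2$ merely fill in details the paper leaves implicit.
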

\begin{proof}%
Let $G$ be a counterexample with fewest edges and fix $\kappa = \Delta(G) + 1$. For every proper subgraph $H$ of $G$, every component of $H$ is connected with maximum degree at most three and is not $3$-regular, thus every component of $H$ (hence $H$) has an acyclic edge coloring with at most $\kappa$ colors, which implies that $G$ is a $\kappa$-deletion-minimal graph. By \autoref{kappa=2} and \autoref{24edge}, the graph $G$ contains neither $1$-vertices nor $2$-vertices, thus $G$ is $3$-regular.
\end{proof}
\begin{theorem}[Hou \cite{2012arXiv1202.6129H}]\label{NMAD4}%
If $G$ is a graph with $\mad(G) < 4$, then $\chiup_{a}'(G) \leq \Delta(G) + 2$.
\end{theorem}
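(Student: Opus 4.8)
The plan is to argue by contradiction through a minimal counterexample and then run a discharging argument powered by the structural lemmas of the previous section. First I would let $G$ be a counterexample with the fewest edges and set $\kappa = \Delta(G) + 2$. Since $\mad$ is monotone under taking subgraphs, every proper subgraph $H$ of $G$ satisfies $\mad(H) < 4$ and $|E(H)| < |E(G)|$, so by minimality $\chiup_{a}'(H) \leq \Delta(H) + 2 \leq \kappa$; hence $G$ is $\kappa$-deletion-minimal with $\kappa = \Delta(G) + 2$, and every lemma requiring $\kappa \geq \Delta(G) + 2$ becomes available. The cases $\Delta(G) \leq 3$ are already settled in the introduction, so I may assume $\Delta(G) \geq 4$. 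The initial charge is $\mathrm{ch}(v) = \deg(v) - 4$; applying $\mad(G) < 4$ to $G$ itself gives $\sum_{v} \mathrm{ch}(v) = 2|E(G)| - 4|V(G)| < 0$. The goal is to redistribute charge so that every final charge is nonnegative, contradicting this deficit.

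Next I would record what the lemmas say when $\kappa = \Delta(G) + 2$. By \autoref{kappa=2} we have $\delta(G) \geq 2$. By \autoref{2++edge} every neighbour of a $2$-vertex has degree at least $\kappa - \Delta(G) + 4 = 6$, and by \autoref{3+vertex} every neighbour of a $3$-vertex has degree at least $\kappa - \Delta(G) + 2 = 4$. Call a $3$-vertex \emph{special} if it has a neighbour of degree exactly $4$ and \emph{normal} otherwise; for a special $3$-vertex \autoref{Good-3-vertex}~\ref{3b} forces its other two neighbours to have degrees $\Delta(G)$ and at least $5$, so a special $3$-vertex has exactly one $4$-neighbour and two $5^{+}$-neighbours, while a normal $3$-vertex has three $5^{+}$-neighbours. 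Moreover \autoref{Good-3-vertex}~\ref{3c} shows that a $4$-vertex incident to a $3$-vertex has that $3$-vertex as its \emph{only} $3^{-}$-neighbour, and parts \ref{3d} and \ref{3f} show that any $5^{+}$-vertex adjacent to a special $3$-vertex has at least three neighbours of degree at least $4$. Finally \autoref{N_3_N} bounds the number of $3$-neighbours of a $5$-vertex by $\kappa - \Delta(G) + 1 = 3$, and \autoref{2+edge} controls the number of $2$-neighbours of a vertex adjacent to a $2$-vertex. Two easy consequences dispose of the small cases: if $\Delta(G) = 4$ there is no $2$-vertex, and a $3$-vertex would be forced by \autoref{Good-3-vertex}~\ref{3b} to have a neighbour of degree at least $5$, so $G$ has no $3^{-}$-vertices and is $4$-regular, contradicting $\mad(G) < 4$; and if $\Delta(G) = 5$ there are no $2$-vertices, so only degrees $3, 4, 5$ occur. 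Thus I may assume $\Delta(G) \geq 5$.

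I would then apply the following discharging rules. \textbf{R1:} every $2$-vertex takes $1$ from each of its two neighbours. \textbf{R2:} every normal $3$-vertex takes $\tfrac13$ from each of its three neighbours. \textbf{R3:} every special $3$-vertex takes $\tfrac12$ from each of its two $5^{+}$-neighbours and nothing from its $4$-neighbour. Under these rules each $2$-vertex ends with $-2 + 2 = 0$ and each $3$-vertex ends with $0$, while each $4$-vertex sends out nothing (its unique low-degree neighbour, if any, is a single special $3$-vertex, which takes $0$ from it) and so keeps its charge $0$. It remains to check that every $d$-vertex with $d \geq 5$ stays nonnegative.

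The verification of the $5^{+}$-vertices is where the real work lies. A $d$-vertex has charge $d - 4$ and sends out $1$ per $2$-neighbour, $\tfrac13$ per normal $3$-neighbour and $\tfrac12$ per special $3$-neighbour, so I must bound these neighbour counts. If $v$ is adjacent only to normal $3$-vertices, then for $d = 5$ the count is at most $3$ by \autoref{N_3_N} (total $\leq 1 = \mathrm{ch}(v)$), and for $d \geq 6$ even $d$ such neighbours cost $d/3 \leq d - 4$. If $v$ is adjacent to a special $3$-vertex, then by \autoref{Good-3-vertex}~\ref{3d} and \ref{3f} it has at least three $4^{+}$-neighbours, which caps its number of $3^{-}$-neighbours and makes the $\tfrac12$-transfers affordable; the tightest instance is $\Delta(G) = 5$, where such a $v$ has at most two $3$-neighbours and charge exactly $1$, giving outflow $\leq 2 \cdot \tfrac12 = 1$. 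The genuinely delicate case is a vertex adjacent to $2$-vertices, since each such neighbour costs a full unit: here I would invoke \autoref{2+edge}, whose main inequality forces $v$ to have many high-degree neighbours and whose refinement \ref{B} bounds the number of $2$-neighbours precisely, so that the total outflow never exceeds $d - 4$. I expect this balancing --- simultaneously accommodating $2$-neighbours through \autoref{2+edge} and special-$3$-neighbours through \autoref{Good-3-vertex} for the medium degrees $d \in \{5, 6, 7\}$ --- to be the main obstacle; once every vertex is shown to retain nonnegative charge, $\sum_{v} \mathrm{ch}(v) \geq 0$ contradicts the deficit established above and completes the proof.
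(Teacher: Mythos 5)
Your proposal is correct and is essentially the paper's own proof: the same reduction to a $\kappa$-deletion-minimal counterexample with $\kappa=\Delta(G)+2$, the same initial charge $\deg(v)-4$, the identical three discharging rules, and the same appeals to \autoref{kappa=2}, \autoref{2++edge}, \autoref{3+vertex}, \autoref{Good-3-vertex}, \autoref{N_3_N} and \autoref{2+edge} in the verification (the paper does not even need your separate treatment of $\Delta(G)\in\{4,5\}$, since those cases are handled vacuously by the same rules). The one computation you leave as a sketch --- a $6^{+}$-vertex $v$ with $2$-neighbours --- closes exactly as you predict: \autoref{2+edge} forces at least three $4^{+}$-neighbours, and if there are exactly three, then part~\ref{B} of \autoref{2+edge} caps the number of $2$-neighbours at $\deg(v)-5$, so the outflow is at most $(\deg(v)-5)\cdot 1+2\cdot\tfrac{1}{2}=\deg(v)-4$.
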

\begin{proof}%
Let $G$ be a counterexample with fewest edges and fix $\kappa = \Delta(G) + 2$. Since the hypothesis is deletion-closed, it follows that $G$ is a $\kappa$-deletion-minimal graph. By \autoref{kappa=2}, the graph $G$ is $2$-connected and $\delta(G) \geq 2$.
Since $\mad(G) < 4$, it follows that
\begin{equation}%
\sum_{v \in V(G)} (\deg(v) - 4) < 0.
\end{equation}

Assign the initial charge of every vertex $v$ to be $\deg(v) - 4$. We design appropriate discharging rules and redistribute charges among the vertices, such that the final charge of every vertex is nonnegative, which derives a contradiction.

{\bf The Discharging Rules:}
\begin{enumerate}[label= (R\arabic*)]%
\item Every $2$-vertex receives $1$ from each $6^{+}$-neighbor.
\item Every special $3$-vertex receives $1/2$ from each $5^{+}$-neighbor.
\item Every  normal $3$-vertex receives $1/3$ from each $5^{+}$-neighbor.
\end{enumerate}

By \autoref{2++edge}, every $2$-vertex is adjacent to two $6^{+}$-vertices, and then the final charge is $2 - 4 + 2 \times 1 = 0$. By \autoref{Good-3-vertex}, every special $3$-vertex is adjacent to two $5^{+}$-vertices, and then the final charge is $3 - 4 + 2 \times 1/2 = 0$. Every normal $3$-vertex is adjacent to three $5^{+}$-vertices, and then the final charge is $3 - 4 + 3 \times 1/3 = 0$. If $v$ is a $4$-vertex, then its final charge is equal to its initial charge zero.

Let $v$ be a $5$-vertex. By \autoref{Good-3-vertex}, if $v$ is adjacent to a special $3$-vertex, then $v$ is adjacent to at least three $4^{+}$-vertices, and then its final charge is at least $5 - 4 - 2 \times 1/2 = 0$. By \autoref{N_3_N}, if $v$ is not adjacent to any special $3$-vertex, then its final charge is at least $5 - 4 - 3 \times 1/3 = 0$.

Let $v$ be a $6^{+}$-vertex. If $v$ is adjacent to at least four $4^{+}$-vertices, then its final charge is at least $\deg(v) - 4 -(\deg(v) - 4) = 0$. So we may assume that $v$ is adjacent to at most three $4^{+}$-vertices. By \autoref{2+edge}, if $v$ is adjacent to some $2$-vertices and exactly three $4^{+}$-vertices, then its final charge is at least $\deg(v) - 4 - (\deg(v) - 5) \times 1 - 2 \times 1/2 = 0$. So we may assume that $v$ is not adjacent to any $2$-vertex. If $v$ is adjacent to some special $3$-vertices, then $v$ is adjacent to at least three $4^{+}$-vertices, and then its final charge is at least $\deg(v) - 4 - (\deg(v) - 3) \times 1/2 = (\deg(v) - 5)/2 > 0$. If all the $3^{-}$-vertices in $N_{G}(v)$ are normal $3$-vertices, then the final charge of $v$ is at least $\deg(v) - 4 - \deg(v) \times 1/3 = 2\deg(v)/3 - 4 \geq 0$.
\end{proof}

Two cycles are {\em adjacent} if they share a common edge, and are {\em intersecting} if they share a common vertex. Let $G$ be a plane graph, two faces are {\em adjacent} if they share a common edge, and are {\em intersecting} if they share a common vertex.

\begin{theorem}\label{NoAdjacent}%
Let $G$ be a planar graph without triangles adjacent to cycles of length $3$ and $4$. If every $5$-cycle has at most three edges contained in triangles, then $G$ admits an acyclic edge coloring with $\Delta(G) + 2$ colors.
\end{theorem}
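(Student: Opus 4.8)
The plan is to transplant the discharging argument of \autoref{NMAD4} into the planar setting, replacing the hypothesis $\mad(G)<4$ by Euler's formula so that charge lives on both the vertices and the faces. First I would dispose of the case $\Delta(G)\le 3$ (already covered by the known results quoted in the introduction) and take a counterexample $G$ with the fewest edges, fixing $\kappa=\Delta(G)+2$. The two hypotheses are preserved under taking subgraphs, since deleting edges neither creates new short cycles nor enlarges the set of edges lying in triangles; hence the property is deletion-closed and $G$ is $\kappa$-deletion-minimal. By \autoref{kappa=2} the graph $G$ is $2$-connected, so every face boundary is a cycle and $\delta(G)\ge 2$. Embedding $G$ in the plane and writing $\deg(f)$ for the length of the boundary cycle of a face $f$, Euler's formula yields
\[
\sum_{v\in V(G)}(\deg(v)-4)+\sum_{f\in F(G)}(\deg(f)-4)=-8 ,
\]
and I would give each vertex and each face the initial charge $\deg(\cdot)-4$.

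The discharging rules split into a vertex part and a face part that do not interact. For the vertices I would reuse verbatim the rules of \autoref{NMAD4}: each $2$-vertex receives $1$ from every $6^{+}$-neighbour, each special $3$-vertex receives $1/2$ from every $5^{+}$-neighbour, and each normal $3$-vertex receives $1/3$ from every $5^{+}$-neighbour. Because these transfers move charge only between vertices and depend only on the vertex structure, the verification that every vertex ends nonnegative is word for word that of \autoref{NMAD4}, resting on \autoref{2++edge}, \autoref{3+vertex}, \autoref{Good-3-vertex} and \autoref{N_3_N}, all applicable since $\kappa=\Delta(G)+2$. Thus $2$- and $3$-vertices finish at $0$, $4$-vertices are untouched, and the case analysis for the $5^{+}$-vertices carries over unchanged; in particular at $\Delta(G)=4$ the same analysis forces the absence of $2$- and $3$-vertices.

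For the faces I would use the single rule that every face of size at least $5$ sends $1/3$ across each of its edges that is shared with a $3$-face. The crux is two local facts extracted from the hypotheses. First, the condition that no triangle is adjacent to a $3$- or $4$-cycle forces every face sharing an edge with a $3$-face to have a boundary cycle of length at least $5$ (a boundary $3$- or $4$-cycle sharing that edge would be a short cycle adjacent to the triangle); hence each of the three edges of a $3$-face abuts a $5^{+}$-face, and the $3$-face ends with charge $3-4+3\cdot\frac13=0$. Second, the condition that every $5$-cycle carries at most three edges in triangles means the boundary $5$-cycle of a $5$-face is adjacent to at most three $3$-faces, so such a face ends with at least $5-4-3\cdot\frac13=0$; a $4$-face keeps its charge $0$, and a $k$-face with $k\ge 6$ loses at most $k/3$ and ends with at least $2k/3-4\ge 0$. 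All faces being nonnegative as well, and discharging conserving the total charge $-8$ while making every final charge nonnegative, we reach the desired contradiction.

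I expect the \textbf{main obstacle} to be the face verification for the size-$5$ faces: one must check that the combinatorial hypothesis on $5$-cycles genuinely yields the bound ``at most three incident $3$-faces''. The point is that an edge shared with a $3$-face is an edge lying in a triangle, that distinct incident $3$-faces occupy distinct edges (each edge bounds exactly one face besides the given one), and that no degenerate incidence can inflate the count; here $2$-connectivity (ruling out repeated vertices on a face) together with the exclusion of triangles adjacent to short cycles is exactly what makes the counting clean. The remaining bookkeeping, and the derivation of the advertised corollaries from this theorem, should then be routine.
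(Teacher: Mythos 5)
Your proposal is correct and follows essentially the same route as the paper: the same reduction to a $\kappa$-deletion-minimal graph, the same Euler-formula charge assignment $\deg(\cdot)-4$ on vertices and faces, the identical vertex rules (reused from \autoref{NMAD4} and justified by \autoref{2++edge}, \autoref{Good-3-vertex}, \autoref{3+vertex}, \autoref{N_3_N}), and the identical face rule with the same verification that $3$-faces only abut $5^{+}$-faces and that $5$-faces lose at most $3\cdot\frac{1}{3}$. The only (harmless) deviation is your separate disposal of $\Delta(G)\le 3$, which the discharging argument already handles implicitly since the structural lemmas preclude $2$- and $3$-vertices when $\Delta(G)$ is small.
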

\begin{proof}%
Let $G$ be a counterexample with fewest edges that has been embedded in the plane, and fix $\kappa = \Delta(G) + 2$. Since the hypothesis is deletion-closed, it follows that $G$ is a $\kappa$-deletion-minimal graph. By \autoref{kappa=2}, the graph $G$ is $2$-connected and the boundary of every face is a cycle. By the hypothesis, every $3$-face is adjacent to $5^{+}$-faces.

From Euler's formula, we have the following equality:
\begin{equation}%
\sum_{v \in V(G)} (\deg(v) - 4) + \sum_{f \in F(G)} (\deg(f) - 4) = - 8
\end{equation}

Assign the initial charge of every vertex $v$ to be $\deg(v) - 4$ and the initial charge of every face $f$ to be $\deg(f) - 4$. We design appropriate discharging rules and redistribute charges among vertices and faces, such that the final charge of every vertex and every face is nonnegative, which derives a contradiction.

{\bf The Discharging Rules:}
\begin{enumerate}[label= (R\arabic*)]%
\item Every $2$-vertex receives $1$ from each $6^{+}$-neighbor.
\item Every special $3$-vertex receives $1/2$ from each $5^{+}$-neighbor.
\item Every  normal $3$-vertex receives $1/3$ from each $5^{+}$-neighbor.
\item If $f$ is a $3$-face, then $f$ receives $1/3$ from adjacent $5^{+}$-faces passing through each of its incident edges.
\end{enumerate}

By \autoref{2++edge}, every $2$-vertex is adjacent to two $6^{+}$-vertices, and then the final charge is $2 - 4 + 2 \times 1 = 0$. By \autoref{Good-3-vertex}, every special $3$-vertex is adjacent to two $5^{+}$-vertices, and then the final charge is $3 - 4 + 2 \times 1/2 = 0$. Every normal $3$-vertex is adjacent to three $5^{+}$-vertices, and then the final charge is $3 - 4 + 3 \times 1/3 = 0$. If $v$ is a $4$-vertex, then its final charge is equal to its initial charge zero.

Let $v$ be a $5$-vertex. By \autoref{Good-3-vertex}, if $v$ is adjacent to a special $3$-vertex, then $v$ is adjacent to at least three $4^{+}$-vertices, and then its final charge is at least $5 - 4 - 2 \times 1/2 = 0$. If $v$ is not adjacent to any special $3$-vertex, then its final charge is at least $5 - 4 - 3 \times 1/3 = 0$ by \autoref{N_3_N}.

Let $v$ be a $6^{+}$-vertex. If $v$ is adjacent to at least four $4^{+}$-vertices, then its final charge is at least $\deg(v) - 4 -(\deg(v) - 4) = 0$. So we may assume that $v$ is adjacent to at most three $4^{+}$-vertices. By \autoref{2+edge}, if $v$ is adjacent to some $2$-vertices and exactly three $4^{+}$-vertices, then its final charge is at least $\deg(v) - 4 - (\deg(v) - 5) \times 1 - 2 \times 1/2 = 0$. So we may assume that $v$ is not adjacent to any $2$-vertex. If $v$ is adjacent to some special $3$-vertices, then $v$ is adjacent to at least three $4^{+}$-vertices, and then its final charge is at least $\deg(v) - 4 - (\deg(v) - 3) \times 1/2 = (\deg(v) - 5)/2 > 0$. If all the $3^{-}$-vertices in $N_{G}(v)$ are normal $3$-vertices, then the final charge of $v$ is at least $\deg(v) - 4 - \deg(v) \times 1/3 = 2\deg(v)/3 - 4 \geq 0$.

If $f$ is a $4$-face, then its final charge is zero. If $f$ is a $5$-face, then it is adjacent to at most three $3$-faces, thus its final charge is at least $5 - 4 - 3 \times 1/3 = 0$. If $f$ is a $6^{+}$-face, then its final charge is at least
\[
\deg(f) - 4 - \deg(f) \times \frac{1}{3} = \frac{2}{3} \deg(f) - 4 \geq 0.
\]

Therefore, the final charge of every vertex and every face is nonnegative, and then the sum of the final charges is nonnegative, which derives a contradiction.
\end{proof}

As immediate consequences of this theorem, we have the following corollaries.
\begin{corollary}%
Every planar graph $G$ without triangles adjacent to cycles of length from $3$ to $5$ admits an acyclic edge coloring with $\Delta(G) + 2$ colors.
\end{corollary}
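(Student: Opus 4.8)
The plan is to show that the hypotheses of this corollary are strictly stronger than those of \autoref{NoAdjacent}, so that the corollary follows immediately. Recall that \autoref{NoAdjacent} asks for two conditions: first, that $G$ has no triangle adjacent to a cycle of length $3$ or $4$; and second, that every $5$-cycle has at most three edges contained in triangles. I would verify both from the single hypothesis that $G$ has no triangle adjacent to a cycle of length from $3$ to $5$.

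The first condition is a direct weakening of the hypothesis: forbidding adjacency to cycles of length $3$, $4$, and $5$ in particular forbids adjacency to cycles of length $3$ and $4$. For the second condition I would argue that, in fact, no $5$-cycle has \emph{any} edge contained in a triangle. Suppose to the contrary that some edge $e$ of a $5$-cycle $C$ lies in a triangle $T$. Then $T$ and $C$ share the edge $e$, so $T$ is adjacent to $C$; since $C$ has length $5$, this contradicts the assumption that $G$ contains no triangle adjacent to a cycle of length from $3$ to $5$. Hence every $5$-cycle has zero edges contained in triangles, which is certainly at most three.

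With both hypotheses of \autoref{NoAdjacent} verified, I would simply invoke that theorem to conclude that $G$ admits an acyclic edge coloring with $\Delta(G) + 2$ colors. There is no genuine obstacle here; the only thing worth noting is that ``$T$ is adjacent to $C$'' and ``an edge of $C$ is contained in $T$'' describe the same edge-sharing event viewed from the two sides, so the stronger adjacency hypothesis collapses the quantitative $5$-cycle condition to a triviality.
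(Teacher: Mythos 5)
Your proof is correct and matches the paper's intent exactly: the paper presents this corollary as an immediate consequence of \autoref{NoAdjacent}, and your verification that the hypothesis forbidding triangles adjacent to $3$-, $4$-, and $5$-cycles implies both conditions of that theorem (with the $5$-cycle condition holding vacuously, since no edge of a $5$-cycle can lie in a triangle) is precisely the intended reduction.
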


\begin{corollary}[Wan and Xu \cite{Wan}]%
If $G$ is a planar graph without $i$-cycle adjacent to $j$-cycle for $i, j \in\{3, 4, 5\}$, then $\chiup_{a}'(G) \leq \Delta(G) + 2$.
\end{corollary}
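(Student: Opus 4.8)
The plan is to derive this corollary directly from \autoref{NoAdjacent} by showing that the hypothesis stated here is strictly stronger than the one required there. Recall that \autoref{NoAdjacent} asks for two things: first, that $G$ contain no triangle adjacent to a cycle of length $3$ or $4$; and second, that every $5$-cycle have at most three edges contained in triangles. I would verify both conditions in turn under the standing assumption that $G$ has no $i$-cycle adjacent to a $j$-cycle whenever $i, j \in \{3, 4, 5\}$.

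The first condition is immediate. A triangle adjacent to a $3$-cycle is precisely a $3$-cycle adjacent to a $3$-cycle (the case $i = j = 3$), and a triangle adjacent to a $4$-cycle is a $3$-cycle adjacent to a $4$-cycle (the case $i = 3$, $j = 4$); both adjacencies are forbidden by hypothesis. Hence $G$ contains no triangle adjacent to a cycle of length $3$ or $4$, which is exactly the first requirement of \autoref{NoAdjacent}.

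For the second condition, the key observation is that an edge of a $5$-cycle being contained in a triangle is the same as that triangle sharing an edge with the $5$-cycle, i.e. the triangle being \emph{adjacent} to the $5$-cycle. Since the case $i = 3$, $j = 5$ is excluded by hypothesis, no triangle is adjacent to any $5$-cycle, and so in fact no edge of any $5$-cycle lies on a triangle at all. Thus every $5$-cycle has zero---and in particular at most three---edges contained in triangles, so the second hypothesis of \autoref{NoAdjacent} holds vacuously. Applying \autoref{NoAdjacent} then yields $\chiup_{a}'(G) \leq \Delta(G) + 2$, as desired.

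The only subtlety, which is really the entire content of the argument, is the reformulation of ``an edge contained in a triangle'' as ``an adjacent triangle'' in the second step; once that translation is made explicit, the corollary is a one-line consequence of \autoref{NoAdjacent}, requiring none of the structural lemmas or discharging beyond what already establishes the theorem itself.
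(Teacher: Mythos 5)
Your proof is correct and is essentially the paper's own argument: the paper presents this corollary as an immediate consequence of \autoref{NoAdjacent}, and your derivation is exactly the verification that the corollary's hypothesis implies both hypotheses of that theorem. In particular, your key translation---an edge of a $5$-cycle contained in a triangle is precisely a $3$-cycle sharing an edge with (i.e.\ adjacent to) that $5$-cycle, so the $5$-cycle condition holds vacuously under the $i=3$, $j=5$ exclusion---is the intended reading of the paper's definition of adjacent cycles.
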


\begin{corollary}[Hou \etal \cite{MR2891643}]%
If $G$ is a planar graph without $4$- and $5$-cycles, then $G$ admits an acyclic edge coloring with $\Delta(G) + 2$ colors.
\end{corollary}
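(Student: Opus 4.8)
The plan is to obtain this corollary as an immediate consequence of \autoref{NoAdjacent}: I would verify that any planar graph $G$ with no $4$-cycles and no $5$-cycles automatically satisfies both hypotheses of that theorem, namely that no triangle is adjacent to a cycle of length $3$ or $4$, and that every $5$-cycle has at most three edges contained in triangles. Once both conditions are checked, \autoref{NoAdjacent} gives $\chiup_{a}'(G) \leq \Delta(G) + 2$ directly, so the entire task reduces to a short hypothesis-verification rather than any new coloring argument.

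First I would confirm the structural hypothesis. Because $G$ has no $4$-cycle, no triangle can be adjacent to a $4$-cycle, so the only configuration left to rule out is a triangle adjacent to another triangle. If two triangles shared an edge $uv$, they would be $uvx$ and $uvy$ with $x \neq y$, and then the four edges $ux$, $xv$, $vy$, $yu$ would close up into the $4$-cycle $uxvy$, contradicting the hypothesis that $G$ has no $4$-cycle. Hence no two triangles are adjacent, and the first hypothesis of \autoref{NoAdjacent} holds.

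Second, the condition that every $5$-cycle has at most three edges contained in triangles is vacuously true, since $G$ contains no $5$-cycle at all. With both hypotheses verified, \autoref{NoAdjacent} applies and yields the desired bound. The only step demanding any reasoning is the observation that two edge-sharing triangles force a $4$-cycle; everything else is either immediate from the absence of $4$-cycles or holds vacuously from the absence of $5$-cycles, so I do not expect a genuine obstacle—the difficulty has already been absorbed into the proof of \autoref{NoAdjacent}.
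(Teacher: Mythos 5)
Your proposal is correct and is exactly the argument the paper intends: the corollary is stated as an immediate consequence of \autoref{NoAdjacent}, and your verification (no $4$-cycles kills triangle--$4$-cycle adjacency, two edge-sharing triangles $uvx$, $uvy$ would create the $4$-cycle $uxvy$, and the $5$-cycle condition holds vacuously) is precisely the hypothesis check the paper leaves implicit.
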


\begin{corollary}[Hou \etal \cite{MR2891643}]%
If $G$ is a planar graph without $4$- and $6$-cycles, then $G$ admits an acyclic edge coloring with $\Delta(G) + 2$ colors.
\end{corollary}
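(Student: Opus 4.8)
The plan is to deduce this corollary directly from \autoref{NoAdjacent} by checking that every planar graph $G$ without $4$-cycles and without $6$-cycles satisfies both hypotheses of that theorem: that no triangle is adjacent to a cycle of length $3$ or $4$, and that every $5$-cycle has at most three edges lying in triangles. Once both hypotheses are verified, the conclusion $\chiup_{a}'(G) \le \Delta(G) + 2$ is immediate.

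First I would dispose of the adjacency hypothesis. Adjacency of a triangle to a $4$-cycle cannot occur, since $G$ has no $4$-cycle at all. For two triangles sharing an edge $xy$, say $xyp$ and $xyq$ with $p \neq q$, the four vertices $x,p,y,q$ are distinct and the edges $xp, py, yq, qx$ form a $4$-cycle; as this is forbidden, no triangle is adjacent to a triangle. Hence the first hypothesis holds.

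The substantive step is the $5$-cycle condition, and here I would prove the stronger statement that \emph{no} edge of any $5$-cycle lies in a triangle, which trivially implies the bound ``at most three.'' Fix a $5$-cycle $C = v_1v_2v_3v_4v_5$. Any chord of a $5$-cycle joins two vertices at cyclic distance two and so splits $C$ into a triangle and a $4$-cycle; since $G$ has no $4$-cycle, $C$ must be chordless, i.e.\ induced. Now suppose some edge, say $v_2v_3$, lies in a triangle with apex $u$, so that $uv_2, uv_3 \in E(G)$. Because $C$ is induced, $u$ cannot be a vertex of $C$ (a cycle vertex adjacent to both $v_2$ and $v_3$ would supply a chord), so $u$ is external and distinct from all $v_i$. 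Then replacing the edge $v_2v_3$ of $C$ by the path $v_2\,u\,v_3$ produces the $6$-cycle $v_1v_2uv_3v_4v_5$, contradicting the absence of $6$-cycles. Thus no edge of $C$ is contained in a triangle, and the second hypothesis is satisfied. Applying \autoref{NoAdjacent} finishes the proof.

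The only delicate point is ensuring the apex $u$ of the triangle on $v_2v_3$ does not secretly coincide with a vertex of the $5$-cycle, which would invalidate the $6$-cycle count; establishing that $C$ is induced from the no-$4$-cycle assumption handles this cleanly, after which the $6$-cycle argument is routine. Everything else is a direct forbidden-subgraph check, so I expect no genuine obstacle beyond this bookkeeping.
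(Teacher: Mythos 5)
Your proposal is correct and follows exactly the route the paper intends: the paper states this corollary as an immediate consequence of \autoref{NoAdjacent}, and your argument simply spells out the verification of its two hypotheses (no triangle adjacent to a $3$- or $4$-cycle, and the $5$-cycle condition via the stronger claim that no edge of a $5$-cycle lies in a triangle). The details—the shared-edge triangles forcing a $4$-cycle, the chordlessness of $5$-cycles, and the $6$-cycle obtained from an external apex—are all sound.
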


\begin{corollary}[Fiedorowicz \cite{MR2915388}]%
If $G$ is a plane graph such that every vertex is contained in at most one $4^{-}$-face, then $G$ admits an acyclic edge coloring with $\Delta(G) + 2$ colors.
\end{corollary}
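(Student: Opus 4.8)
The plan is to deduce the corollary from \autoref{NoAdjacent}: it suffices to check that the face condition ``every vertex lies in at most one $4^{-}$-face'' forces both hypotheses of that theorem, namely that no triangle is adjacent to a cycle of length $3$ or $4$, and that every $5$-cycle has at most three edges lying in triangles. The single fact driving everything is that the hypothesis is just a reformulation of the statement that \emph{the $4^{-}$-faces of $G$ are pairwise vertex-disjoint}; in particular no two of them share an edge.

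First I would treat the facial versions of the two conditions, where they are immediate. If a $3$-face $T$ were adjacent to a $3$-face or a $4$-face $f$, then $T$ and $f$ would share an edge, hence a vertex, and both would be $4^{-}$-faces, contradicting vertex-disjointness; thus every $3$-face is adjacent only to $5^{+}$-faces. For a $5$-face $f$, any two consecutive boundary edges meet at a vertex $x$; if both lay on adjacent $3$-faces, those two $3$-faces would both contain $x$, again a contradiction. Since any three edges of a $5$-cycle contain two consecutive ones, $f$ can be adjacent to at most two $3$-faces, so at most three (in fact at most two) of its edges lie in triangular faces. These are exactly the facial incarnations of the hypotheses of \autoref{NoAdjacent}.

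The main obstacle is that the hypotheses of \autoref{NoAdjacent} are phrased in terms of \emph{cycles}, not faces, so I must rule out the offending configurations even when the short cycle in question does not bound a face. Here I would argue topologically: a cycle of length at most $4$ that does not bound a face encloses a nonempty disk, and filling a disk whose boundary is a short, highly ``positively curved'' cycle forces a $4^{-}$-face incident to a corner of that cycle, which is the discrete Gauss--Bonnet phenomenon that closing off a short boundary cannot be achieved with large faces alone. Combining such a forced small face with the small face or triangle on the other side of the shared edge would again place two $4^{-}$-faces at a common vertex, contradicting the hypothesis; the same localization handles a $5$-cycle with four edges in triangles. I expect making this corner argument precise to be the delicate point. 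A clean alternative, worth keeping as a fallback, is to bypass the cycle-to-face passage entirely: the discharging in the proof of \autoref{NoAdjacent} only ever invokes the facial facts established in the previous paragraph, so one may rerun that discharging on a minimal counterexample verbatim, the charge of every vertex and every face ending up nonnegative and contradicting the Euler relation.
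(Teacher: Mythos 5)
Your facial derivations and your identification of the crux are both correct, but the crux cannot be overcome, because the implication you are trying to establish is false. Consider the plane graph obtained as follows: take a $4$-cycle $abcd$, join $a$ to $c$ by a path $axyc$ drawn inside it, and attach a triangle $abt$ to the edge $ab$ from outside. This graph is $2$-connected and its faces are the $3$-face $abt$, the two $5$-faces $abcyx$ and $axycd$, and the outer $5$-face $atbcd$; hence every vertex lies on at most one $4^{-}$-face, so the hypothesis of the corollary holds. Yet the interior of the non-facial $4$-cycle $abcd$ consists entirely of $5$-faces, which refutes your ``discrete Gauss--Bonnet'' claim that a short non-facial cycle forces a $4^{-}$-face at one of its corners; and the triangle $abt$ shares the edge $ab$ with the $4$-cycle $abcd$, so the hypothesis of \autoref{NoAdjacent} fails for this graph. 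The face condition therefore does not imply the cycle condition of \autoref{NoAdjacent}, and no refinement of the topological argument can change that.

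Your fallback has a different, equally serious gap: the discharging in the proof of \autoref{NoAdjacent} is not self-contained given the two facial facts. All of its vertex-charge estimates rest on the structural lemmas (\autoref{2+edge}, \autoref{2++edge}, \autoref{Good-3-vertex}, \autoref{3+vertex}, \autoref{N_3_N}), which apply only to $\kappa$-deletion-minimal graphs, and a minimal counterexample is $\kappa$-deletion-minimal only because the hypothesis of \autoref{NoAdjacent} is closed under taking subgraphs. The face condition is not deletion-closed: in the example above, the subgraph obtained by deleting $x$ and $y$ has faces $abt$, $abcd$ and $atbcd$, so $a$ and $b$ each lie on two $4^{-}$-faces. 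Hence for a minimal counterexample to the corollary you cannot conclude that its proper subgraphs are acyclically $\kappa$-edge-colorable, so the lemmas, and with them the discharging, are unavailable; ``rerunning the proof verbatim'' does not go through. What does work, and is evidently what the paper's one-line deduction presupposes, is to read the hypothesis as a condition on cycles rather than faces: if every vertex lies on at most one cycle of length at most $4$, then a triangle sharing an edge with a $4^{-}$-cycle would place a vertex on two such cycles, and a $5$-cycle $v_1v_2v_3v_4v_5$ with four edges in triangles forces the triangles through $v_2$ (resp.\ $v_4$) to coincide with $v_1v_2v_3$ (resp.\ $v_3v_4v_5$), placing $v_3$ on two such cycles; both hypotheses of \autoref{NoAdjacent} then follow at once and the corollary is a black-box application of that theorem. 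Under the face formulation as literally stated, the statement is still true (it is Fiedorowicz's theorem, proved independently), but it does not follow from \autoref{NoAdjacent} by the route you, or the paper, propose.
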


After this paper was submitted, Wang \etal published the following result which strengthens \autoref{NoAdjacent} a lot. Comparing their result with ours, they ignore the restriction on the $5$-cycles.

\begin{theorem}[\cite{MR3101746}]
If $G$ is a planar graph without a triangle adjacent to a $4$-cycle, then $\chiup_{a}'(G) \leq \Delta(G) + 2$.
\end{theorem}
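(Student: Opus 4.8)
The plan is to mirror the discharging proof of \autoref{NoAdjacent}, first noting that the present hypothesis is hereditary and is in fact strong enough to forbid adjacent triangles: if two triangles $abc$ and $abd$ shared the edge $ab$, then $c\,a\,d\,b$ would be a $4$-cycle sharing the edges $ca$ and $bc$ with the triangle $abc$, contradicting the hypothesis. Hence I take a minimal counterexample $G$ and fix $\kappa = \Delta(G) + 2$; by heredity $G$ is $\kappa$-deletion-minimal, by \autoref{kappa=2} it is $2$-connected so every face is bounded by a cycle, and every $3$-face is adjacent only to $5^{+}$-faces. I would then apply the same combined vertex/face discharging, assigning the initial charge $\deg(x) - 4$ to every vertex and every face, with total charge $-8$ coming from Euler's formula.

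For the vertices I would keep verbatim the three rules of \autoref{NoAdjacent}: a $2$-vertex takes $1$ from each $6^{+}$-neighbour, a special $3$-vertex takes $1/2$ and a normal $3$-vertex takes $1/3$ from each $5^{+}$-neighbour. The verification is unchanged, since \autoref{2++edge} forces the two neighbours of a $2$-vertex to be $6^{+}$-vertices, \autoref{Good-3-vertex} supplies the two $5^{+}$-neighbours of a special $3$-vertex, \autoref{3+vertex} gives the $5^{+}$-neighbours of a normal $3$-vertex, and \autoref{N_3_N} caps the number of $3$-neighbours of a $5$-vertex; so every vertex finishes with nonnegative charge exactly as before. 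None of this uses the $5$-cycle restriction, so all the novelty sits on the face side.

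The real work is the faces. Sending $1/3$ from each $5^{+}$-face to each incident $3$-face across their common edge raises every $3$-face to $0$ and leaves every $6^{+}$-face with $2\deg(f)/3 - 4 \geq 0$; the only dangerous case is a $5$-face. In \autoref{NoAdjacent} the assumption that a $5$-cycle carries at most three triangle-edges capped a $5$-face at three incident triangles and gave $1 - 3\times(1/3) = 0$. Dropping that assumption, a $5$-face $f = v_{1}v_{2}v_{3}v_{4}v_{5}$ may be adjacent to as many as five $3$-faces, and the naive rule then leaves it at $1 - 5/3 = -2/3$. This is the crux of the theorem. My plan is to exploit that if $f$ is adjacent to a triangle across each of the two consecutive edges meeting at $v_{i}$, then $v_{i}$ gains two further neighbours and is a $4^{+}$-vertex; consequently, when $f$ is adjacent to $k \geq 4$ triangles, all but at most two of its incident vertices are $4^{+}$-vertices, and since the triangles are pairwise non-adjacent their apex vertices are distinct. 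I would add a fourth rule that returns a small amount (totalling at most $2/3$) to such an overloaded $5$-face from its incident $5^{+}$-vertices, and then re-examine the $5^{+}$- and $6^{+}$-vertex inequalities to confirm the donors keep nonnegative charge.

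The hard part will be precisely this last step: showing that an overloaded $5$-face always has enough incident vertices of degree $\geq 5$ carrying genuine surplus. A $5$-face surrounded by triangles all of whose boundary vertices are $4$-vertices would defeat any rule that only moves charge out of $5^{+}$-vertices, so the argument must either establish a new structural lemma excluding such a configuration in a $\kappa$-deletion-minimal graph, or instead charge each triangle to its three incident vertices rather than to its three adjacent faces and then re-prove the vertex inequalities. In the latter route the delicate cases become a $2$-vertex lying in a triangle, which is controlled by \autoref{2+edge}~\ref{A} (forcing both its neighbours to be $6^{+}$-vertices), and a triangle all of whose vertices have degree exactly four, which must be constrained through \autoref{DegreeSum}. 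Reconciling these local constraints so that every charge inequality closes is where essentially all of the difficulty of removing the $5$-cycle hypothesis resides.
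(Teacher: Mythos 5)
This statement is not proved in the paper at all: it is quoted, with attribution, from the later paper of Wang, Shu and Wang \cite{MR3101746}, and the authors remark explicitly that it strengthens their own \autoref{NoAdjacent} by removing the restriction on $5$-cycles. So there is no in-paper proof to compare yours against; the honest comparison is between your proposal and a complete proof of the stronger statement, and by that standard your proposal has a genuine gap --- one you essentially concede yourself. The parts that work are exactly the parts shared with \autoref{NoAdjacent}: the hypothesis is deletion-closed, it forbids two triangles sharing an edge (your $4$-cycle $cadb$ argument is correct), every $3$-face is adjacent only to $5^{+}$-faces, and the whole vertex side of the discharging goes through verbatim via \autoref{2++edge}, \autoref{Good-3-vertex}, \autoref{3+vertex} and \autoref{N_3_N}. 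But the theorem lives in the case you leave open: a $5$-face adjacent to four or five $3$-faces, which ends at $1 - 5/3 = -2/3$ under the old rules.

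Your proposed repair --- refunding up to $2/3$ to an overloaded $5$-face from incident $5^{+}$-vertices --- is not a proof but a plan, and it fails exactly in the configuration you name: every vertex incident to the overloaded $5$-face (and to its surrounding triangles) may have degree four, and degree-four vertices carry zero surplus in this charging scheme, so there are no donors. It is true that the paper's lemmas give you more than you use: \autoref{NO444} together with the lemma forcing $\Delta(G) \geq 5$ rules out $(4,4,4)$-triangles, and \autoref{Good-3-vertex}~\ref{3c} plus \autoref{2++edge} rule out triangles containing a $3^{-}$-vertex and a $4$-vertex, so in fact every triangle is incident with a $5^{+}$-vertex. But even granting this, your scheme is still not closed: the helpful $5^{+}$-vertex may be the apex of the triangle rather than on the $5$-face, triangles here may intersect in vertices (unlike in \autoref{NoIntersect}), so a single $5$- or $6$-vertex can be incident with several needy triangles while also paying its $3$-neighbours under rules (R2)--(R3), and none of the resulting vertex inequalities is verified. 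Your fallback route (charging triangles to their incident vertices) stops at the same point, with the delicate cases listed but not resolved. Closing these cases is precisely the content of the separate paper \cite{MR3101746}; as it stands, your argument proves only \autoref{NoAdjacent}, not the statement in question.
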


We conclude this section by presenting a new result on the plane graph without intersecting triangles, which improves the result of Sheng-Wang \cite{MR2832148}. 

\begin{theorem}\label{NoIntersect}
If $G$ is a plane graph without intersecting triangles, then $\chiup_{a}'(G) \leq \Delta(G) + 3$.
\end{theorem}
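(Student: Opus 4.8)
The plan is to argue by minimal counterexample and combine the structural lemmas of Section~\ref{SL} with a discharging argument based on Euler's formula. Suppose $G$ is a counterexample with the fewest edges, embedded in the plane, and set $\kappa = \Delta(G) + 3$. Since the class of plane graphs without intersecting triangles is deletion-closed, $G$ is $\kappa$-deletion-minimal, so all the lemmas with the hypothesis $\kappa \geq \Delta(G) + 3$ (in particular \autoref{NO44t}) and those with $\kappa \geq \Delta(G) + 2$ are available. By \autoref{kappa=2} the graph $G$ is $2$-connected, hence $\delta(G) \geq 2$ and every face boundary is a cycle. The relevant thresholds specialise nicely: by \autoref{2++edge} every neighbour of a $2$-vertex has degree at least $\kappa - \Delta(G) + 4 = 7$, and by \autoref{3+vertex} every neighbour of a $3$-vertex has degree at least $\kappa - \Delta(G) + 2 = 5$; moreover \autoref{Good-3-vertex} shows that a special $3$-vertex has its two remaining neighbours of degree at least $6$ and, by \autoref{Good-3-vertex}~\ref{3c}, that the edge joining it to its $5$-vertex neighbour lies in no triangle.

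Next I would extract the consequences of forbidding intersecting triangles. Two triangles cannot share a vertex, so each vertex lies in at most one triangle and, in particular, no two $3$-faces are adjacent; hence every $3$-face is bounded by three $4^{+}$-faces. Combining this with the degree thresholds above, every triangle contains at most one $3^{-}$-vertex: a $2$-vertex forces its two triangle-neighbours to be $7^{+}$-vertices, while a $3$-vertex (special or normal) forces its two triangle-neighbours to be $6^{+}$-vertices. Thus the only triangles whose incident vertices are all poor are those of type $(4,4,4)$, $(4,4,\tau)$ and $(4,5^{+},5^{+})$, and for the first two types \autoref{NO44t} restricts how many degree-three vertices the large vertex may carry.

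For the discharging I would assign to each vertex $v$ the charge $\deg(v) - 4$ and to each face $f$ the charge $\deg(f) - 4$, so that
\begin{equation}
\sum_{v \in V(G)} (\deg(v) - 4) + \sum_{f \in F(G)} (\deg(f) - 4) = -8.
\end{equation}
The rules would feed the deficient objects from their rich neighbours, exactly as in \autoref{NoAdjacent} but tuned to the larger thresholds: every $2$-vertex receives $1$ from each (necessarily $7^{+}$-) neighbour; every special (resp.\ normal) $3$-vertex receives $\frac{1}{2}$ (resp.\ $\frac{1}{3}$) from each of its high-degree neighbours; and each $3$-face receives a fixed amount from each incident $5^{+}$-vertex and from each adjacent $5^{+}$-face. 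One then checks that every $2$-, $3$-, $4$- and $5$-vertex and every $4^{+}$-face ends with nonnegative charge, and that the surplus $\deg(v) - 4$ of a $6^{+}$-vertex $v$ suffices to pay all of its low-degree neighbours together with its (at most one) incident $3$-face. The decisive point is that the larger thresholds arising from $\kappa = \Delta(G) + 3$ give each poor vertex strictly more helpful neighbours than in the $\Delta(G) + 2$ setting, leaving the rich vertices enough to support faces as well; here \autoref{2+edge} and \autoref{N_3_N} bound how many $2$- and $3$-neighbours a given high-degree vertex can have.

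The hardest part, I expect, is the simultaneous bookkeeping at a $6^{+}$-vertex that is adjacent to many $2$- and $3$-vertices while also lying on a $3$-face, together with the treatment of the triangles of type $(4,4,4)$ and $(4,4,\tau)$ bordered only by $4$-faces, where no incident vertex can donate and no adjacent face carries a surplus. For these I would lean on \autoref{NO44t} (and, where a triangle edge is forced outside other triangles, on \autoref{Good-3-vertex}~\ref{3c} and \autoref{L9}) to bound the number of poor neighbours and to pin down the local degrees, and then isolate and rule out the residual rigid configurations directly rather than through generic counting. Calibrating the face-to-$3$-face donations so that these extreme triangles are covered, without overspending the charge that the same vertices owe to their $2$- and $3$-neighbours, is where the real work lies.
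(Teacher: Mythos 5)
Your overall strategy coincides with the paper's: a minimal counterexample, $\kappa$-deletion-minimality for $\kappa = \Delta(G)+3$, the same charges $\deg(v)-4$ and $\deg(f)-4$ summing to $-8$, and essentially the same discharging rules. (One cosmetic difference: the paper's rule for triangles is purely vertex-to-face --- a $3$-face receives $1$ from its unique incident $5^{+}$-vertex, or $1/2$ from each of at least two incident $5^{+}$-vertices; your additional face-to-face donations are unnecessary and, as you concede yourself, useless precisely for a triangle bordered only by $4$-faces.) The vertex-side verification you sketch is carried out in the paper with exactly the lemmas you name: \autoref{2+edge}, \autoref{2++edge}, \autoref{3+vertex}, \autoref{Good-3-vertex}, \autoref{N_3_N} and \autoref{NO44t}.

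The genuine gap is your treatment of $(4,4,4)$-triangles. Under any scheme in which only $5^{+}$-vertices (and richer faces) can pay a $3$-face, a $(4,4,4)$-face retains charge $-1$, and the hypothesis of no intersecting triangles only guarantees that its three neighboring faces are $4^{+}$-faces, which have no surplus to give. You recognize the problem but propose to ``isolate and rule out the residual rigid configurations directly''; this is not a residual case that yields to local tinkering --- it is a substantial structural theorem. The paper disposes of it by invoking two further lemmas stated just before its proof of \autoref{NoIntersect}: first, that every $\kappa$-deletion-minimal graph with $\kappa \geq \Delta(G)+2$ satisfies $\Delta(G) \geq 5$, which rests on \autoref{Non4Regular} together with the cited (and nontrivial) theorem that every $4$-regular graph is acyclically edge-$6$-colorable; and second, \autoref{NO444} (Wang \etal), asserting that such a graph with $\Delta(G) \geq 5$ contains no $(4,4,4)$-cycle at all. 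Neither statement is derivable from the Section~\ref{SL} lemmas you permit yourself: \autoref{NO44t} concerns a $\tau$-vertex of a $(4,4,\tau)$-triangle adjacent to many $3$-vertices and says nothing about $(4,4,4)$-triangles, and \autoref{L9} and \autoref{Good-3-vertex}~\ref{3c} concern $3$-vertices, which do not appear in this configuration. Without these two lemmas --- or a full proof of their content, which is the main work of a separate paper --- your discharging cannot be closed.
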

Before proving \autoref{NoIntersect}, we require two further lemmas. By \autoref{Non4Regular} and the main result (every $4$-regular graph admits an acyclic edge coloring with six colors) in \cite{WangSW2012+}, the following lemma follows.

\begin{lemma}%
If $G$ is a $\kappa$-deletion-minimal graph with $\kappa \geq \Delta(G) + 2$, then $\Delta(G) \geq 5$.
\end{lemma}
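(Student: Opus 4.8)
The plan is to argue by contradiction: suppose $G$ is a $\kappa$-deletion-minimal graph with $\kappa \geq \Delta(G) + 2$ yet $\Delta(G) \leq 4$. By the definition of $\kappa$-deletion-minimality we have $\chiup_{a}'(G) > \kappa \geq \Delta(G) + 2$, so it suffices to exhibit, in each case, an acyclic edge coloring of $G$ using at most $\Delta(G) + 2 \leq \kappa$ colors; any such coloring contradicts $\chiup_{a}'(G) > \kappa$. To access the connectivity hypotheses of the results I intend to quote, I would first invoke \autoref{kappa=2}, which guarantees that $G$ is $2$-connected and hence connected.

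First I would dispose of the small cases $\Delta(G) \leq 3$. As recalled in the introduction, every graph with $\Delta(G) \leq 2$ satisfies $\chiup_{a}'(G) \leq 3$, and every graph with $\Delta(G) = 3$ satisfies $\chiup_{a}'(G) \leq 5$; in either situation $\chiup_{a}'(G) \leq \Delta(G) + 2 \leq \kappa$, immediately contradicting $\chiup_{a}'(G) > \kappa$. Thus I may assume $\Delta(G) = 4$, so that $\kappa \geq 6$.

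For $\Delta(G) = 4$ the argument splits according to whether $G$ is $4$-regular. If $G$ is not $4$-regular, then, since $G$ is connected, \autoref{Non4Regular} yields $\chiup_{a}'(G) \leq \Delta(G) + 2 = 6 \leq \kappa$, again a contradiction. The remaining case, where $G$ is $4$-regular, is where \autoref{Non4Regular} no longer applies; here I would invoke the external input that every $4$-regular graph admits an acyclic edge coloring with six colors (the main result of \cite{WangSW2012+}), which gives $\chiup_{a}'(G) \leq 6 \leq \kappa$ once more.

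In every case we have produced an acyclic edge coloring of $G$ with at most $\kappa$ colors, contradicting $\chiup_{a}'(G) > \kappa$; hence $\Delta(G) \geq 5$. The only genuine obstacle is the $4$-regular subcase, which lies entirely outside the structural machinery developed in this paper: it rests solely on the borrowed six-color bound for $4$-regular graphs. Consequently the real content of this lemma is the organization of the case analysis around that quoted result, and the hard part is precisely that the $4$-regular case cannot be settled by the deletion-minimality lemmas of \autoref{SL} alone.
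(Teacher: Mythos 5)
Your proof is correct and follows essentially the same route as the paper, which derives the lemma in one line by combining \autoref{Non4Regular} with the quoted six-color result for $4$-regular graphs from \cite{WangSW2012+}. The only cosmetic difference is that you treat $\Delta(G) \leq 3$ via the classical bounds from the introduction, whereas the paper covers those cases implicitly through \autoref{Non4Regular} (a graph with $\Delta(G) \leq 3$ is never $4$-regular).
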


A $(d_{1}, d_{2}, d_{3})$-cycle is a triangle with vertices of degree $d_{1}, d_{2}$ and $d_{3}$ respectively. The following lemma is not clearly stated, but it is contained in the proof of \cite{MR3044159}.
\begin{lemma}[Wang \etal \cite{MR3044159}]\label{NO444}%
Let $G$ be a $\kappa$-deletion-minimal graph. If $\kappa \geq \Delta(G) + 2$ and $\Delta(G) \geq 5$, then $G$ contains no $(4, 4, 4)$-cycles.
\end{lemma}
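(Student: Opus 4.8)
The plan is to suppose, for contradiction, that $G$ contains a $(4,4,4)$-cycle $xyz$ and to manufacture an acyclic edge coloring of $G$ with $\kappa$ colors. Write $N(x)=\{y,z,x_1,x_2\}$, $N(y)=\{x,z,y_1,y_2\}$ and $N(z)=\{x,y,z_1,z_2\}$. Since $G$ is $\kappa$-deletion-minimal, $G-xy$ admits an acyclic edge coloring $\phi$; put $a=\phi(xz)$ and $b=\phi(yz)$, so $a\neq b$ because both edges meet $z$. As $\deg(x)+\deg(y)=8<\kappa+2$ (recall $\kappa\geq\Delta(G)+2\geq 7$), Fact 2 rules out $\mathcal{U}(x)\cap\mathcal{U}(y)=\emptyset$; hence $x$ and $y$ share $s:=|\mathcal{U}(x)\cap\mathcal{U}(y)|\geq 1$ colors, and every candidate color $\alpha$ for $xy$ is blocked by a $(\gamma,\alpha,x,y)$-critical path for some common color $\gamma$. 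The number of candidate colors is $\kappa-6+s\geq\Delta(G)-3$, and the whole argument is a competition between this many colors that must be blocked and the limited supply of blocking paths.

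First I would dispose of the cases in which the common colors are confined to the triangle edges, that is, where every shared color lies in $\{a,b\}$. The point is that the $a$-edge at $x$ is $xz$ and the $b$-edge at $y$ is $yz$, so a blocking path using $a$ (respectively $b$) has its second (respectively second-to-last) edge incident with the degree-$4$ vertex $z$; the alternate color $\alpha$ must then be one of $\phi(zz_1),\phi(zz_2)$, since $a$ and $b$ themselves are not candidates for $xy$. Thus at most two candidate colors can ever be blocked in this regime, which already contradicts $\kappa-6+s\geq\Delta(G)-3$ once $\Delta(G)\geq 6$; the residual tight situation $\Delta(G)=5$ is settled by inspecting the at most two relevant colors directly. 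This is exactly the leverage gained from $z$ also having degree $4$.

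The remaining, and substantive, case is that some common color $c$ lies on non-triangle edges, say $c=\phi(xx_1)=\phi(yy_1)$, where $x_1$ and $y_1$ may have degree as large as $\Delta(G)$ and the naive count no longer closes. Here $a\notin\mathcal{U}(y)$ and $b\notin\mathcal{U}(x)$, for otherwise $a$ or $b$ would be an extra common color and we would fall back into the previous regime. The key maneuver is to recolor a triangle edge to liberate a color for $xy$: recolor $xz$ from $a$ to a color $a'\in C(x)\cap C(z)$ chosen so as not to create a new $x$--$y$ common color, whereupon $a$ becomes a genuine candidate for $xy$ and one assigns $xy$ the color $a$; the symmetric move recolors $yz$ to liberate $b$. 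When such a move fails, it is because either a $(c,a,x,y)$-critical path survives or recoloring $xz$ spawns a fresh dichromatic cycle through $z$; in each failure I would read off a forced colored edge at $x_1$, $y_1$, or $z$, or a second common color, and then repeat with the symmetric edge or with the third color at $x$. Accumulating these forced consequences over all $\kappa-6+s$ candidates pushes $\deg(x_1)$ or $\deg(y_1)$ above $\Delta(G)$, or produces the extra common color forbidden above, in either case a contradiction.

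The hard part is the bookkeeping in this last case. A single recoloring of a triangle edge simultaneously interacts with the $(c,\alpha)$-critical paths that block the candidates, with the two-colored cycles created at the recolored edge, and---through the shared vertex $z$---with the symmetric objects anchored at $y$; choosing $a'$ (and its counterpart $b'$) to avoid all of these at once, while respecting the cases $s\in\{1,2,3\}$ and the various placements of the common colors, is where the real effort lies. The tool that keeps this finite is Fact 1: for each pair of colors there is at most one maximal dichromatic path through a given vertex, so the number of obstructions created or destroyed by any recoloring is bounded, and the case tree terminates.
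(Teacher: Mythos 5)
First, a point of comparison: the paper itself gives no proof of this lemma at all --- it is imported as a known result, stated to be ``contained in the proof of'' Wang \etal\ \cite{MR3044159}. So your argument has to stand entirely on its own, and as written it does not. Your Case 1 is sound as far as it goes: when every common color of $x$ and $y$ lies on $\{xz, yz\}$, every blocking critical path is forced through the degree-four vertex $z$, so only the two colors $\phi(zz_1), \phi(zz_2)$ can be blocked, and the count $\kappa-6+s$ yields a contradiction except when $\Delta(G)=5$, $\kappa=7$, $s=1$. But that residual case is not ``settled by inspecting the at most two relevant colors directly.'' Concretely, with $\mathcal{U}(x)=\{a,c_1,c_2\}$, $\mathcal{U}(y)=\{a,b,c_3\}$ and $\{\phi(zz_1),\phi(zz_2)\}=\{\alpha_1,\alpha_2\}$ equal to the two candidates, the only admissible recoloring of $xz$ is with $c_3$, and after it the two candidates can still be blocked by $(c_3,\alpha_i,x,y)$-critical paths through $z$ ending on $yy_2$; killing this persistent obstruction requires a further recoloring argument (at $yz$, $yy_1$, or at $z$'s other edges) that you never supply.

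The decisive gap is in Case 2. The opening reduction --- ``$a\notin\mathcal{U}(y)$ and $b\notin\mathcal{U}(x)$, for otherwise $a$ or $b$ would be an extra common color and we would fall back into the previous regime'' --- is a non sequitur. Case 1's bound of two blockable candidates needs \emph{every} common color to lie on the triangle, because only then is every blocking path forced through $z$; if $c=\phi(xx_1)=\phi(yy_1)$ is common and additionally $a\in\mathcal{U}(y)$, candidates can be blocked by $(c,\alpha,x,y)$-critical paths that never touch $z$, so neither of your two cases applies to this mixed configuration. Moreover, even granting the extra assumption, the core of Case 2 is a plan rather than a proof: the honest count gives only $\{c\}\cup\{\text{blocked candidates}\}\subseteq\mathcal{U}(x_1)\cap\mathcal{U}(y_1)$, i.e.\ (for $s=1$) $\deg(x_1),\deg(y_1)\geq \kappa-4\geq\Delta(G)-2$, which is no contradiction; the claimed ``accumulation of forced consequences'' from failed recolorings of $xz$ and $yz$ that would push a degree above $\Delta(G)$ or create a forbidden common color is never exhibited, and that is exactly where the substance of the lemma lies --- it is the part to which the cited proof in \cite{MR3044159} devotes a long case analysis. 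As it stands, your proposal establishes the lemma only when all common colors lie on the triangle and $(\Delta(G),s)\neq(5,1)$.
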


Now, we are ready to prove \autoref{NoIntersect}.
\begin{proof}[Proof of \autoref{NoIntersect}]%
Let $G$ be a counterexample with fewest edges and fix $\kappa = \Delta(G) + 3$. Since the hypothesis is deletion-closed, it follows that $G$ is a $\kappa$-deletion-minimal graph. By \autoref{kappa=2}, the graph $G$ is $2$-connected and the boundary of every face is a cycle.

From Euler's formula, we have the following equality:
\begin{equation}%
\sum_{v \in V(G)} (\deg(v) - 4) + \sum_{f \in F(G)} (\deg(f) - 4) = - 8
\end{equation}

Assign the initial charge of every vertex $v$ to be $\deg(v) - 4$ and the initial charge of every face $f$ to be $\deg(f) - 4$. We design appropriate discharging rules and redistribute charges among vertices and faces, such that the final charge of every vertex and every face is nonnegative, which derives a contradiction.

{\bf The Discharging Rules:}
\begin{enumerate}[label= (R\arabic*)]%
\item Every $2$-vertex receives $1$ from each $7^{+}$-neighbor.
\item Every special $3$-vertex receives $1/2$ from each $6^{+}$-neighbor.
\item Every  normal $3$-vertex receives $1/3$ from each $6^{+}$-neighbor.
\item Let $f$ be a $3$-face in $G$. If $f$ is incident with exactly one $5^{+}$-vertex, then $f$ receives $1$ from this $5^{+}$-vertex; if $f$ is incident with at least two $5^{+}$-vertices, then $f$ receives $1/2$ from each incident $5^{+}$-vertex.
\end{enumerate}

By \autoref{2++edge}, every $2$-vertex is adjacent to two $7^{+}$-vertices, and then the final charge is $2 - 4 + 2 \times 1 = 0$. By \autoref{Good-3-vertex}, every special $3$-vertex is adjacent to two $6^{+}$-vertices, and then the final charge is $3 - 4 + 2 \times 1/2 = 0$. Every normal $3$-vertex is adjacent to three $6^{+}$-vertices, and then the final charge is $3 - 4 + 3 \times 1/3 = 0$. If $v$ is a $4$-vertex, then its final charge is equal to its initial charge zero.

If $v$ is a $5$-vertex, then $v$ sends at most $1$ to incident $3$-face, and thus the final charge of $v$ is at least $5 - 4 - 1 = 0$.

Let $v$ be a $6$-vertex. By \autoref{2++edge}, the vertex $v$ is not adjacent to any $2$-vertex. Firstly, assume that $v$ is adjacent to a special $3$-vertex. By \autoref{Good-3-vertex}, the vertex $v$ is adjacent to at most two $3$-vertices, and then the final charge is at least $6 - 4 - 2 \times 1/2 -1 = 0$. Secondly, assume that all the $3$-neighbors of $v$ are normal. If $v$ sends $1$ to incident $3$-face, then this $3$-face contains exactly one $5^{+}$-vertex and it must be a $(4, 4, 6)$-face by \autoref{2++edge} and \autoref{Good-3-vertex}. By \autoref{NO44t}, if $v$ is incident with a $(4, 4, 6)$-face, then $v$ is adjacent to at most three $3$-vertices, and then the final charge is at least $6 - 4 - 3 \times 1/3 -1 = 0$. So we may assume that $v$ does not send $1$ to incident $3$-face. By \autoref{N_3_N}, the vertex $v$ is adjacent to at most four $3$-vertices, thus the final charge is at least $6 - 4 - 4 \times 1/3 -1/2 = 1/6$.

Let $v$ be a $7^{+}$-vertex. If $v$ is adjacent to at least five $4^{+}$-vertices, then its final charge is at least $\deg(v) - 4 - (\deg(v) - 5) - 1 = 0$. So we may assume that $v$ is adjacent to at most four $4^{+}$-vertices. By \autoref{2+edge}, if $v$ is adjacent to some $2$-vertices and exactly four $4^{+}$-vertices, then $v$ is adjacent to at most $\deg(v) - 6$ vertices of degree two, thus its final charge is at least $\deg(v) - 4 - (\deg(v) - 6) \times 1 - 2 \times 1/2 - 1 = 0$. So we may assume that $v$ is not adjacent to any $2$-vertex. By \autoref{Good-3-vertex}~\ref{3d} and \ref{3f}, if $v$ is adjacent to some special $3$-vertices, then $v$ is adjacent to at least four $4^{+}$-vertices, and then its final charge is at least $\deg(v) - 4 - (\deg(v) - 4) \times 1/2 - 1 = (\deg(v) - 6)/2 > 0$. So we assume that all the $3^{-}$-vertices in $N_{G}(v)$ are normal $3$-vertices. By \autoref{3+vertex}, if $v$ is incident with a $3$-face, then it is adjacent to at least one $4^{+}$-vertex, and then the final charge of $v$ is at least $\deg(v) - 4 - (\deg(v) - 1) \times 1/3 - 1 = (2\deg(v)-14)/3 \geq 0$. If $v$ is not incident with any $3$-face, then the final charge is at least $\deg(v) - 4 - \deg(v) \times 1/3 = (2\deg(v) - 12)/3 > 0$.

If $f$ is a $4^{+}$-face, then its final charge is $\deg(f) - 4 \geq 0$. By \autoref{2++edge}, \ref{3+vertex} and \autoref{NO444}, every $3$-face is incident with at least one $5^{+}$-vertex, then its final charge is nonnegative by (R4).

Therefore, the final charge of every vertex and every face is nonnegative, and then the sum of the final charges is nonnegative, which derives a contradiction.
\end{proof}
\section{Concluding remarks}\label{ConcludingR}
Note that \autoref{2+edge} does not provide any local structure on the $\kappa$-deletion-minimal graph $G$ when $\kappa = \Delta(G)$. Here, we consider one extremal case and prove the following result, which generalizes some results in \cite{Basavaraju2012, MR2921598}. The method is inspired by that used in \cite{Basavaraju2012}.
\begin{theorem}%
If $G$ is a graph with $\Delta(G) \geq 3$ and all the $3^{+}$-vertices are independent, then $\chiup_{a}'(G) = \Delta(G)$.
\end{theorem}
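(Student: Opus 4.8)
The lower bound $\chiup_{a}'(G) \geq \Delta(G)$ is immediate, so the whole task is to establish $\chiup_{a}'(G) \leq \Delta(G)$. The plan is to argue by minimal counterexample: let $G$ be a graph satisfying the hypothesis with $\chiup_{a}'(G) > \Delta(G) =: \kappa$ and with the fewest edges. Since a $3^{+}$-vertex of any subgraph is a $3^{+}$-vertex of $G$, the property ``the $3^{+}$-vertices are independent'' is deletion-closed; moreover any proper subgraph $H$ either again satisfies the hypothesis (and so $\chiup_{a}'(H) = \Delta(H) \leq \kappa$ by minimality) or has $\Delta(H) \leq 2$ (and so $\chiup_{a}'(H) \leq 3 \leq \kappa$). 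Hence $G$ is $\kappa$-deletion-minimal with $\kappa = \Delta(G) \geq 3$. By \autoref{kappa=2} the graph $G$ is $2$-connected, so $\delta(G) \geq 2$ and every $2^{-}$-vertex is in fact a $2$-vertex. Thus the mutually adjacent $2$-vertices form internally disjoint paths (\emph{threads}), and $G$ is precisely a subdivision of a multigraph $M$ with $\delta(M) \geq 3$ in which every edge is subdivided at least once; note $\Delta(M) = \Delta(G) = \kappa$. The single fact that drives everything is that, since the $3^{+}$-vertices are independent and $G$ is connected with $\Delta(G) \geq 3$, no bichromatic cycle can consist solely of $2$-vertices (such a cycle would be a whole component, forcing $\Delta = 2$). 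Hence every bichromatic cycle passes through a high vertex, and along any thread such a cycle must use all the thread's edges with only its two colors.

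The core local analysis is as follows. Let $v$ be a high vertex and $v_{1}$ a $2$-neighbor with $N_{G}(v_{1}) = \{v, w_{1}\}$, and let $\phi$ be an acyclic edge coloring of $G - vv_{1}$, which exists by minimality. In $G - vv_{1}$ the vertex $v_{1}$ retains a single colored edge $v_{1}w_{1}$, say colored $\beta_{1}$. Using Fact~2 one first shows $\beta_{1} \in \mathcal{U}(v)$, for otherwise no candidate color for $vv_{1}$ could lie on a bichromatic cycle through $v_{1}$ and some candidate would be valid. Every candidate color $\alpha$ for $vv_{1}$ must then be blocked by an $(\alpha, \beta_{1})$-dichromatic cycle that leaves $v$ along $vv_{1}$, reaches $w_{1}$ through $v_{1}w_{1}$, and returns to $v$ on an edge colored $\beta_{1}$. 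If $w_{1}$ is itself a $2$-vertex, the path can only continue through the unique second edge of $w_{1}$, so at most one candidate is blockable; since there are $\kappa - \deg(v) + 1$ candidates, this forces $\deg(v) = \Delta(G)$. In other words, whenever two consecutive interior vertices of a thread abut a high vertex, that high vertex has maximum degree; in particular both endpoints of every thread with at least two interior vertices are $\Delta$-vertices.

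The previous step confines attention to the tight configurations: $\Delta$-vertices, and threads with a single interior vertex (edges of $M$ subdivided exactly once). For these I would again delete an edge $vv_{1}$ at a $\Delta$-vertex and break the unique blocking $(\alpha, \beta_{1})$-critical path by the color-exchange technique already used in \autoref{2+edge} and \autoref{Good-3-vertex}: permute the colors on the edges $vv_{2}, \dots, vv_{\Delta}$, which are all incident with $2$-vertices and hence locally free, using the uniqueness of bichromatic maximal paths through a vertex (Fact~1) to count and avoid the relevant critical paths, thereby freeing a valid color for $vv_{1}$ and contradicting deletion-minimality.

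The main obstacle is precisely this last step in the case where the far endpoint $w_{1}$ is a high vertex, i.e.\ the subdivided-once threads, which may make $G$ a subdivision of a $\Delta$-regular multigraph. Then the blocking bichromatic cycle can reroute through the many edges at $w_{1}$, so breaking it at the $v$-end may open a new cycle at the $w_{1}$-end. The heart of the proof will be to show that a single, consistent exchange carried out simultaneously at both high endpoints of such a thread destroys all bichromatic cycles at once; this is where Fact~1 and a careful accounting of the critical paths at the two endpoints, in the spirit of the arguments of \cite{Basavaraju2012}, will be indispensable.
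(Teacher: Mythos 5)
Your setup (deletion-minimality, $2$-connectivity via \autoref{kappa=2}, the thread structure) is fine, but there are two genuine gaps, and the first is fatal to the strategy itself rather than a mere omission. You explicitly defer the case where $G$ is a subdivision of a $\Delta$-regular multigraph with every edge subdivided once (``the heart of the proof will be to show\dots''), and the mechanism you propose for that case cannot work. In that case every $3^{+}$-vertex has degree $\Delta = \kappa$ (immediate from \autoref{DegreeSum}, which you never invoke, since all neighbors of a $3^{+}$-vertex are $2$-vertices). For an edge $vv_{1}$ with $\deg(v) = \Delta$ and $\deg(v_{1}) = 2$, Fact~2 does \emph{not} force $\beta_{1} \in \mathcal{U}(v)$: disjointness $\mathcal{U}(v) \cap \mathcal{U}(v_{1}) = \emptyset$ is precisely the situation $\deg(v) + \deg(v_{1}) = \kappa + 2$ that Fact~2 permits, and your argument (``otherwise some candidate would be valid'') is vacuous here, because when $\deg(v) = \kappa$ and $\beta_{1} \notin \mathcal{U}(v)$ there are no candidate colors at all. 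The paper proves (\autoref{No_Common}) that this bad situation always occurs: \emph{every} acyclic edge coloring of $G - vv_{1}$ has $\mathcal{U}(v) \cap \mathcal{U}(v_{1}) = \emptyset$. Hence no candidate color for $vv_{1}$ ever exists; permuting the colors on $vv_{2}, \dots, vv_{\Delta}$ leaves the set $\mathcal{U}(v)$ unchanged, and the color of $v_{1}w_{1}$ cannot be altered either, since all $\kappa$ colors appear at the $\Delta$-vertex $w_{1}$. So no exchange, local or carried out ``simultaneously at both high endpoints,'' can free a valid color --- breaking bichromatic cycles is not the obstruction. The paper's contradiction comes from a completely different, global mechanism: having established \autoref{No_Common}, it takes a coloring of $G - x_{0}y_{0}$, normalizes $\mathcal{U}(x_{0}) = \{1, \dots, \kappa - 1\}$ and $\mathcal{U}(y_{0}) = \{\kappa\}$, follows the maximal $(\kappa, 2)$-alternating path starting at $y_{0}$ (which terminates because color $2$ appears at every vertex of $X$), and swaps the colors $2$ and $\kappa$ along that path; the result is an acyclic coloring of $G - x_{0}y_{0}$ in which $x_{0}$ and $y_{0}$ share the color $2$, contradicting \autoref{No_Common}. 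This global alternating-path swap is the key idea missing from your plan.

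The second gap: you never eliminate threads with two or more interior $2$-vertices. Your local analysis shows only that the high endpoints of such threads have degree $\Delta$ --- which, again, already follows from \autoref{DegreeSum} --- and the assertion that this ``confines attention to'' once-subdivided edges is a non sequitur; as written, your endgame simply does not apply to graphs containing adjacent $2$-vertices. The paper disposes of them by contraction: if $x$ and $y$ are adjacent $2$-vertices, then by $2$-connectivity the edge $xy$ lies in no triangle, so $G/xy$ is simple, still satisfies the hypothesis, and has fewer edges; by minimality of the counterexample it admits an acyclic coloring with $\kappa$ colors, which extends to $G$ because $\kappa \geq 3$. Note that this step uses minimality under a \emph{contraction} (a minor operation), not just deletion-minimality, so it is not available from the structural lemmas alone --- another ingredient your outline would need to supply.
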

\begin{proof}%
Let $G$ be a counterexample with fewest edges and fix $\kappa = \Delta(G)$. Every proper subgraph with maximum degree at most two admits an acyclic edge coloring with at most three colors. Every proper subgraph with maximum degree at least three admits an acyclic edge coloring with $\kappa$ colors due to the minimality of $G$. Hence, the graph $G$ is a $\kappa$-deletion-minimal graph. By \autoref{kappa=2}, the graph $G$ is $2$-connected and the minimum degree is at least two. By \autoref{DegreeSum}, every $3^{+}$-vertex is a vertex with maximum degree.

Suppose that there exists an edge $xy$ with $\deg(x) = \deg(y) = 2$. By the $2$-connectivity of $G$, the edge $xy$ is not connected in any triangle, thus $G/xy$ is a simple graph and all the $3^{+}$-vertices are also independent, so $G/xy$ admits an acyclic edge coloring with $\kappa$ colors. It is easy to see that this edge coloring can be extended to an acyclic edge coloring of $G$ with $\kappa$ colors, a contradiction. Now we have shown that the graph $G$ is bipartite. Let $G$ be the bipartite graph with bipartition $X$ and $Y$, where $X$ is the collection of $\kappa$-vertices and $Y$ is the collection of $2$-vertices.
\begin{claim}\label{No_Common}%
For any acyclic edge coloring $\sigma$ of $G - xy$, we have $\mathcal{U}_{\sigma}(x) \cap \mathcal{U}_{\sigma}(y) = \emptyset$.
\end{claim}
\begin{proof}
Let $xy$ be an edge of $G$ with $x \in X$ and $y \in Y$. Let $N_{G}(x) = \{y, v_{1}, \dots, v_{\kappa -1}\}$ and $\sigma(xv_{i}) = i$ for $1 \leq i \leq \kappa -1$. By contradiction, assume that $\sigma(xv_{1}) = \sigma(yw) = 1$. The only candidate color $\kappa$ for $xy$ is not valid, thus there exists a $(1, \kappa, x, y)$-critical path with respect to $\sigma$. If there exists a vertex $v_{i}$ with $i \geq 2$ such that $\kappa \in \mathcal{U}_{\sigma}(v_{i})$, then exchanging the colors on $xv_{1}$ and $xv_{i}$ results in a new acyclic edge coloring $\sigma_{1}$ of $G - xy$, but now $\kappa$ is valid for $xy$ with respect to $\sigma_{1}$, a contradiction. So we have $\kappa \notin \mathcal{U}_{\sigma}(v_{i})$ for $2 \leq i \leq \kappa - 1$. Reassigning $\kappa$ to $xv_{2}$ results in another acyclic edge coloring $\sigma_{2}$ of $G - xy$. But now the color $2$ is valid for $xy$ with respect to $\sigma_{2}$.
\end{proof}

Let $x_{0}y_{0}$ be an edge of $G$ with $x_{0} \in X$ and $y_{0} \in Y$. The graph $G - x_{0}y_{0}$ admits an acyclic edge coloring $\phi$ with $\kappa$ colors. By \autoref{No_Common}, we have $\mathcal{U}(x_{0}) \cap \mathcal{U}(y_{0}) = \emptyset$. We may assume that $\mathcal{U}(x_{0}) = \{1, 2, \dots, \kappa - 1\}$ and $\mathcal{U}(y_{0}) = \{\kappa\}$. Recall that $\mathcal{U}(x) = \{1, 2, \dots, \kappa\}$ for every vertex $x$ in $X \setminus \{x_{0}\}$. Let $y_{0}x_{1}y_{1} \dots$ be the maximal $(\kappa, 2)$-path with respect to $\phi$. This path ends with an edge $x_{s}y_{s}$ which is colored with $2$, since the color $2$ appears at every vertex in $X$. Reassigning $\kappa$ to $x_{i+1}y_{i+1}$ and $2$ to $x_{i+1}y_{i}$ for $0 \leq i \leq s-1$, we obtain an acyclic edge coloring $\psi$ of $G - x_{0}y_{0}$. But $\mathcal{U}_{\psi}(x_{0}) = \{1, 2, 3, \dots, \kappa - 1\}$ and $\mathcal{U}_{\psi}(x_{0}) \cap \mathcal{U}_{\psi}(y_{0}) = \{2\}$, which contradicts \autoref{No_Common}.
\end{proof}

The concept of $\kappa$-deletion-minimal graph is defined by taking subgraphs. Analogously, we can define another type of minimal graphs by taking minors. A graph $G$ with maximum degree at most $\kappa$ is {\em $\kappa$-minimal} if $\chiup_{a}'(G) > \kappa$ and $\chiup_{a}'(H) \leq \kappa$ for every proper minor $H$ with $\Delta(H) \leq \Delta(G)$. Obviously, every proper subgraph of a $\kappa$-minimal graph admits an acyclic edge coloring with at most $\kappa$ colors, and then every $\kappa$-minimal graph is also a $\kappa$-deletion-minimal graph and all the properties of $\kappa$-deletion-minimal graphs are also true for $\kappa$-minimal graphs. Let $G/e$ denote the graph obtained by contracting the edge $e$ in $G$.
\begin{lemma}%
Let $G$ be a $\kappa$-minimal graph with $\kappa \geq \Delta(G) + 1$. If $v_{0}$ is a $2$-vertex of $G$, then $v_{0}$ is contained in a triangle.
\end{lemma}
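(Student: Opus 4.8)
The plan is to argue by contradiction, exploiting the one feature of $\kappa$-minimality that $\kappa$-deletion-minimality lacks: closure under taking minors, which makes edge contraction available as a tool. Assume $v_0$ is a $2$-vertex with $N_G(v_0) = \{v, w\}$ lying in no triangle, so that $vw \notin E(G)$.

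First I would contract the edge $v_0v$ to form $H = G/v_0v$, merging $v_0$ and $v$ into a new vertex $v'$. Since the only neighbours of $v_0$ are $v$ and $w$, the sole edge that could be doubled is the one joining $v'$ to $w$; but $w \notin N_G(v)$ precisely because $v_0$ lies in no triangle, so no doubling occurs and $H$ is simple. A quick degree count gives $\deg_H(v') = \deg_G(v)$ while every other vertex retains its degree, so $\Delta(H) \le \Delta(G)$. Thus $H$ is a proper minor with $\Delta(H) \le \Delta(G)$, and $\kappa$-minimality furnishes an acyclic edge coloring $\phi$ of $H$ with $\kappa$ colors.

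Next I would pull $\phi$ back to $G - v_0v$, coloring $v_0w$ by $\phi(v'w)$ and each edge $vu$ (for $u \in N_G(v) \setminus \{v_0\}$) by $\phi(v'u)$, leaving every other edge untouched. This coloring is proper, and since $v_0$ has degree one in $G - v_0v$ it lies on no cycle, so any dichromatic cycle of $G - v_0v$ descends to a dichromatic cycle of $H$; acyclicity of $\phi$ then shows the pulled-back coloring is acyclic. It remains only to color $v_0v$. The colors forbidden for properness are precisely the $\deg_G(v)$ colors incident with $v'$ in $H$, so at least $\kappa - \deg_G(v) \ge \kappa - \Delta(G) \ge 1$ candidate colors survive.

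The real content lies in showing that any candidate color $c$ is valid, and here the contraction pays off. Writing $d = \phi(v'w)$ for the color on $v_0w$, properness of $\phi$ at $v'$ forces $d \ne \phi(v'u)$ for every $u$, so in $G$ no edge at $v$ carries the color $d$. Any freshly created dichromatic cycle must traverse the new edge $v_0v$ and hence pass through the $2$-vertex $v_0$, so it is a $(c,d)$-cycle that necessarily enters $v$ along an edge of color $d$ --- which is impossible. (The degenerate triangle $v_0vw$ is ruled out by hypothesis.) Hence $c$ is valid, the coloring extends to all of $G$ with $\kappa$ colors, contradicting $\chiup_{a}'(G) > \kappa$. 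The only points demanding care are confirming that cycles through $v$ genuinely descend to simple cycles in $H$ and that the contraction introduces no hidden multi-edge.
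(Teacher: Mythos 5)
Your proof is correct and takes essentially the same route as the paper's: contract $v_{0}v$ (simple precisely because $v_{0}$ lies in no triangle, with maximum degree unchanged), invoke $\kappa$-minimality to acyclically color the minor, pull the coloring back to $G - v_{0}v$, and extend to $v_{0}v$ with a color avoiding those at the contracted vertex. The paper's proof is just a terser version of this; your observation that no edge at $v$ carries the color of $v_{0}w$ (so no $(c,d)$-cycle through the $2$-vertex $v_{0}$ can arise) is exactly the validity detail the paper leaves implicit.
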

\begin{proof}%
Let $N_{G}(v_{0}) = \{v, w\}$ and $e = vv_{0}$. By contradiction, suppose that $v$ and $w$ are nonadjacent. The graph $G/e$ is a simple graph with $\Delta(G/e) \leq \Delta(G)$, thus it admits an acyclic edge coloring $\phi$. We can extend $\phi$ by assigning a color in $C(v)$ to $vv_{0}$, and obtain an acyclic edge coloring of $G$, a contradiction. 
\end{proof}

\begin{lemma}
Let $G$ be a $\kappa$-minimal graph with $\kappa \geq \Delta(G) + 2$. If $v$ is a $3$-vertex in $G$, then every neighbor of $v$ is a $(\kappa-\Delta(G) + 3)^{+}$-vertex.
\end{lemma}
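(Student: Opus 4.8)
The plan is to sharpen \autoref{3+vertex} by one unit, so I argue by contradiction and suppose that the $3$-vertex $v$, with $N_{G}(v) = \{w, v_{1}, v_{2}\}$, has a neighbour $w$ of the smallest degree permitted by \autoref{3+vertex}, namely $\deg(w) = \kappa - \Delta(G) + 2$; write $N_{G}(w) = \{v, w_{1}, \dots, w_{n}\}$ with $n = \kappa - \Delta(G) + 1$. Since a $\kappa$-minimal graph is $\kappa$-deletion-minimal, every earlier lemma applies to $G$; in particular $\Delta(G) \geq 5$, and \autoref{Good-3-vertex} is available because $\deg(w)$ hits the critical value $\kappa - \Delta(G) + 2$. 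The feature of $\kappa$-minimality that I want to exploit is that contractions are permitted as long as the resulting simple minor has maximum degree at most $\Delta(G)$.

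First I would use \autoref{Good-3-vertex}~\ref{3c} to record that $vw$ lies in no triangle, so that $v$ and $w$ have no common neighbour and $G/vw$ is a simple graph. If $z$ denotes the vertex of $G/vw$ obtained by identifying $v$ and $w$, then its neighbourhood is exactly $\{v_{1}, v_{2}, w_{1}, \dots, w_{n}\}$, whence $\deg(z) = 2 + n = \kappa - \Delta(G) + 3$. After checking $\Delta(G/vw) \leq \Delta(G)$ — the only degree that changes is that of $z$, and for the principal case $\kappa = \Delta(G) + 2$ the requirement reads $5 \leq \Delta(G)$ — the definition of $\kappa$-minimality supplies an acyclic edge colouring $\psi$ of $G/vw$ with at most $\kappa$ colours.

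Next I would transport $\psi$ back to $G - vw$ by splitting $z$: put $\phi(vv_{i}) = \psi(zv_{i})$ for $i \in \{1, 2\}$ and $\phi(ww_{j}) = \psi(zw_{j})$ for $1 \leq j \leq n$, keeping every other edge as in $\psi$. As $\psi$ is proper at $z$, the colour sets $\mathcal{U}_{\phi}(v)$ and $\mathcal{U}_{\phi}(w)$ are disjoint pieces of the rainbow palette at $z$, so $\mathcal{U}_{\phi}(v) \cap \mathcal{U}_{\phi}(w) = \emptyset$. I would then confirm that $\phi$ is acyclic on $G - vw$ by a short case analysis: a bichromatic cycle avoiding both $v$ and $w$, or meeting exactly one of them, projects to a bichromatic cycle through $z$ in $G/vw$ and so cannot exist; and a bichromatic cycle meeting both $v$ and $w$ would force its two colours to lie simultaneously in $\mathcal{U}_{\phi}(v)$ and $\mathcal{U}_{\phi}(w)$, contradicting their disjointness.

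Finally, with an acyclic edge colouring $\phi$ of $G - vw$ satisfying $\mathcal{U}_{\phi}(v) \cap \mathcal{U}_{\phi}(w) = \emptyset$ in hand, Fact 2 applied to the edge $vw$ of the $\kappa$-deletion-minimal graph $G$ yields $\deg(v) + \deg(w) = \kappa + 2$. Substituting $\deg(v) = 3$ and $\deg(w) = \kappa - \Delta(G) + 2$ collapses this to $\Delta(G) = 3$, contradicting $\Delta(G) \geq 5$ and completing the argument. I expect the main obstacle to be the combination of keeping $\deg(z) \leq \Delta(G)$ with the verification that the lift remains acyclic; the decisive input is the triangle-freeness from \autoref{Good-3-vertex}~\ref{3c}, which is precisely what lets the contraction manufacture a colouring of $G - vw$ with disjoint colour sets at $v$ and $w$ — the one phenomenon that \autoref{Good-3-vertex}~\ref{3a} forbids for arbitrary colourings of $G - vw$.
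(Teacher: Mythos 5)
Your overall route is the same as the paper's: assume $\deg(w) = \kappa - \Delta(G) + 2$, invoke \autoref{Good-3-vertex}~\ref{3c} to see that $vw$ lies in no triangle so that $G/vw$ is simple, obtain an acyclic edge coloring of the contraction from $\kappa$-minimality, and pull it back to contradict minimality. Your explicit splitting of $z$, the acyclicity case analysis, and the final appeal to Fact 2 (equivalently, to \autoref{Good-3-vertex}~\ref{3a}) are a correct, more detailed rendering of what the paper compresses into ``this edge coloring can be easily extended to an acyclic edge coloring of $G$.''

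There is, however, one genuine gap: your verification that $\Delta(G/vw) \leq \Delta(G)$ covers only what you call the principal case $\kappa = \Delta(G) + 2$. The lemma is stated for all $\kappa \geq \Delta(G) + 2$, and the new vertex $z$ has degree $\kappa - \Delta(G) + 3$, so the required inequality is $\kappa - \Delta(G) + 3 \leq \Delta(G)$, i.e. $\kappa \leq 2\Delta(G) - 3$, which does \emph{not} follow from $\Delta(G) \geq 5$. For instance $\Delta(G) = 5$ and $\kappa = 8$ satisfy every hypothesis you state, yet give $\deg(z) = 6 > \Delta(G)$; then $G/vw$ is not a minor with maximum degree at most $\Delta(G)$, and $\kappa$-minimality supplies no coloring of it, so the proof as written fails there. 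The paper closes exactly this point with \autoref{Good-3-vertex}~\ref{3b}: since the $3$-vertex $v$ is adjacent to the $(\kappa - \Delta(G) + 2)$-vertex $w$, its other neighbors satisfy $\Delta(G) \geq \deg(v_{2}) \geq \kappa - \Delta(G) + 3$, which is precisely the inequality needed, for every admissible $\kappa$. With that one-line substitution for your degree check, your argument is complete.
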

\begin{proof}%
By \autoref{3+vertex}, every neighbor of $v$ is a $(\kappa-\Delta(G) + 2)^{+}$-vertex. Suppose that $v$ is adjacent to a $(\kappa-\Delta(G) + 2)$-vertex $w$. By \autoref{Good-3-vertex}~\ref{3c}, the edge $wv$ is not contained in any triangle of $G$, thus the graph $G/wv$ is a simple graph. Note that the new vertex in graph $G/wv$ has degree $\kappa-\Delta(G) + 3$, thus according to \autoref{Good-3-vertex}~\ref{3b}, the graph $G/wv$ is a simple graph with maximum degree $\Delta(G)$. By the minimality of $G$, the simple graph $G/wv$ admits an acyclic edge coloring with at most $\Delta(G) + 2$ colors, but this edge coloring can be easily extended to an acyclic edge coloring of $G$ with at most $\Delta(G) + 2$ colors, a contradiction.
\end{proof}

\vskip 3mm \vspace{0.3cm} \noindent{\bf Acknowledgments.} The authors would like to thank the anonymous reviewers for their valuable comments and assistance on earlier drafts. The first author was supported by NSFC (11101125).

\end{document}